\DeclareMathOperator*{\argmax}{arg\,max}
\DeclareMathOperator*{\argmin}{arg\,min}
\newcommand{\T}{\mathcal{T}_c} % Optimal transport cost
\newcommand{\chfun}{\mathds{1}} % characteristic function of a set (1 and 0)
\newcommand{\pf}{F} % primal functional
\newcommand{\df}{G} % dual functional
\newcommand{\kf}{K} % Kantorovich dual functional
\newcommand{\sfct}{H} % Sub-dual functional
\newcommand{\id}{\pi} % vertical coordinate map
\newcommand{\cts}[1]{\mathcal{C}(#1)}
\newcommand{\ctsbdd}[1]{\mathcal{C}_b(#1)}
\newcommand{\base}{B}
\newcommand{\X}{X}
\newcommand{\Y}{Y}
\newcommand{\svar}{{x}}
\newcommand{\shz}{s}
\newcommand{\shza}{\tilde{s}}
\newcommand{\svt}{p}
\newcommand{\tvar}{y}
\newcommand{\tvara}{\tilde{y}}
\newcommand{\ul}{u} % dummy variable for upper limit of integration
\newcommand{\ula}{\tilde{u}} % alternative dummy variable for upper limit of integration
\newcommand{\bm}{{\bar{p}}}      % base measure
\newcommand{\bmalt}{{q}}      % alternative measure on base space
\newcommand{\im}{{\mu}} % induced measure
\newcommand{\tm}{{\nu}}
\newcommand{\leb}[1]{\mathcal{L}^{#1}}
\newcommand{\n}{n}
\newcommand{\ext}[1]{#1_{\mathrm{ext}}} % extended sets for reformulation as SDOT
\newcommand{\cst}{\tau} % constant that makes surface integrate to 1
\newcommand{\ev}{\sigma} % evaluation point of inverse cost
\newcommand{\PV}{Q} % Ertel PV as a function
\newcommand{\PVv}{q} % Ertel PV value
\newcommand{\PT}{\Theta} % potential temperature as a function
\newcommand{\PTv}{\theta} % potential temperature value
\newcommand{\ZAM}{Z} % zonal angular momentum as a function
\newcommand{\ZAMv}{z} % zonal angular momentum value
\begin{document}
% --------- Environments ---------
\newtheorem{thm}{Theorem}[section]
\newtheorem{lem}[thm]{Lemma}
\newtheorem{defn}[thm]{Definition}
\newtheorem{prop}[thm]{Proposition}
\newtheorem{cor}[thm]{Corollary}
\newtheorem{rem}[thm]{Remark}
\newtheorem{conj}[thm]{Conjecture}
\newtheorem{ass}[thm]{Assumption}
\newtheorem{example}[thm]{Example}
\newtheorem{cexa}[thm]{Counterexample}
\newtheorem{problem}[thm]{Problem}
%--------- Environments with normal font -------
\newcommand{\defnn}[3]{\begin{defn}[#1]\label{#2}
\normalfont{#3}
\end{defn}}
\newcommand{\problemn}[3]{\begin{problem}[#1]\label{#2}
\normalfont{#3}
\end{problem}}
\newcommand{\lemn}[3]{\begin{lem}[#1]\label{#2}
\normalfont{#3}
\end{lem}}
\newcommand{\exan}[3]{\begin{example}[#1]\label{#2}
\normalfont{#3}
\end{example}}
% --------- Bold face letters ---------
\newcommand{\R}{\mathbb{R}}
\newcommand{\Q}{\mathbb{Q}}
\newcommand{\N}{\mathbb{N}}
\newcommand{\Z}{\mathbb{Z}}
\newcommand{\C}{\mathbb{C}}
%--------- Spaces ---------
\newcommand{\DPM}{\mathcal{Q}} % discrete probability measures
\newcommand{\PM}{\mathcal{P}} % Probability measures
\newcommand{\Pac}{\mathcal{P}_{\mathrm{ac}}}
\newcommand{\Pc}{\mathcal{P}_{\mathrm{c}}}
\newcommand{\M}{\mathcal{M}} % signed Borel measures
\newcommand{\Mac}{\mathscr{M}_{\mathrm{ac}}} % absolutely continuous signed Borel measures
% --------- Bold and overlined letter and symbols --------- 
\newcommand{\bz}{\vb{z}}  
\newcommand{\bc}{\vb{c}}
\newcommand{\bx}{\vb{x}}
\newcommand{\by}{\vb{y}}
\newcommand{\bu}{\vb{u}}
\newcommand{\bv}{\vb{v}}
\newcommand{\bl}{\vb{l}}
\newcommand{\bk}{\vb{k}}
\newcommand{\bw}{{\vb*{w}}}
\newcommand{\bd}{\vb{d}}
\newcommand{\bomega}{\boldsymbol{\Omega}}
\newcommand{\bzw}{\vb{z},\vb*{w}}
\newcommand{\oz}{\bar{z}}
\newcommand{\om}{\bar{m}}          
\newcommand{\ox}{\bar{x}}
\newcommand{\oy}{\bar{y}}
\newcommand{\obm}{\bar{\vb{m}}}
\newcommand{\obx}{\bar{\vb{x}}}
\newcommand{\oby}{\bar{\vb{y}}}
\newcommand{\obz}{\bar{\vb{z}}}
%--------- Times new roman letters --------- 
\newcommand{\ri}{{\mathrm{i}}}
\newcommand{\re}{{\mathrm{e}}}
\newcommand{\rd}{\mathrm{d}}
% --------- Differential operators --------- 
\newcommand{\done}[2]{\dfrac{d {#1}}{d {#2}}}
\newcommand{\donet}[2]{\frac{d {#1}}{d {#2}}}
\newcommand{\pdone}[2]{\dfrac{\partial {#1}}{\partial {#2}}}
\newcommand{\pdonet}[2]{\frac{\partial {#1}}{\partial {#2}}}
\newcommand{\pdonetext}[2]{\partial {#1}/\partial {#2}}
\newcommand{\pdtwo}[2]{\dfrac{\partial^2 {#1}}{\partial {#2}^2}}
\newcommand{\pdtwot}[2]{\frac{\partial^2 {#1}}{\partial {#2}^2}}
\newcommand{\pdtwomix}[3]{\dfrac{\partial^2 {#1}}{\partial {#2}\partial {#3}}}
\newcommand{\pdtwomixt}[3]{\frac{\partial^2 {#1}}{\partial {#2}\partial {#3}}}
%--------- Other maths shortcuts ---------
\newcommand{\eps}{\varepsilon}
\newcommand{\esssup}{\mathrm{ess\, sup}}
\newcommand{\spt}{\mathrm{spt}}
\newcommand{\mres}{%
	\,\raisebox{-.127ex}{\reflectbox{\rotatebox[origin=br]{-90}{$\lnot$}}}\,%
}
\newcommand{\ominf}{X}%{\Omega_{\infty}}
%-----------------------------------------------------------------------------------------------

\title{A definition of the background state of the atmosphere\\using optimal transport}
\date{}
\author{Charlie Egan\footnote{Institute of Computer Science, University of Göttingen, Germany; charles.egan@uni-goettingen.de}, John Methven\footnote{Departement of Meteorology, University of Reading, UK; j.methven@reading.ac.uk}, David P.~Bourne\footnote{David P. Bourne, Maxwell Institute for Mathematical Sciences and Department of
Mathematics, Heriot-Watt University, Edinburgh, UK; d.bourne@hw.ac.uk}, Mike J.~P.~Cullen\footnote{Met Office, Exeter, UK (retired)}}

\maketitle

\begin{abstract}
The dynamics of atmospheric disturbances are often described in terms of displacements of air parcels relative to their locations in a notional background state.
Modified Lagrangian Mean (MLM) states have been proposed by M.~E.~McIntyre using the Lagrangian conserved variables potential vorticity and potential temperature to label air parcels, thus avoiding the need to calculate trajectories explicitly. Methven and Berrisford further defined a zonally symmetric MLM state for global atmospheric flow in terms of mass in zonal angular momentum ($z$) and potential temperature ($\theta$) coordinates.
We prove that for any snapshot of an atmospheric flow in a single hemisphere, there exists a unique energy-minimising MLM state in geophysical coordinates (latitude and pressure). Since the state is an energy minimum, it is suitable for quantification of finite amplitude disturbances and examining atmospheric instability.
This state is obtained by solving a free surface problem, which we frame as the minimisation of an optimal transport cost over a class of source measures.
The solution consists of a source measure, encoding surface pressure, and an optimal transport map, connecting the distribution of mass in geophysical coordinates to the known distribution of mass in $(z, \theta)$. 
We show that this problem reduces to an optimal transport problem with a known source measure, which has a numerically feasible discretisation. 
Additionally, our results hold for a large class of cost functions, and generalise analogous results on free surface variants of the semi-geostrophic equations.
\end{abstract}

\section{Introduction}\label{sect:intro}

The dynamics of atmospheric disturbances and geophysical fluid instability are often described in terms of displacements of air parcels relative to a notional background state. Such disturbances include large amplitude waves, vortices, and other weather systems. In \cite{mcintyre:80a}, Modified Lagrangian Mean (MLM) states were proposed as a suitable choice of background state by using the conserved variables potential vorticity ($Q$) and potential temperature ($\Theta$) to label air parcels, thus avoiding the need to calculate 3-D trajectories explicitly to measure disturbance activity. In a chaotic dynamical system, neighbouring trajectories separate exponentially on average, rendering them sensitive to initial conditions and increasingly complex and infeasible to calculate, even numerically, while $Q$ and $\Theta$ can typically be calculated from instantaneous data. The authors of \cite{methven2015slowly} further obtained a zonally symmetric MLM state from global atmospheric data by finding the distribution of mass in alternative coordinates defined by zonal angular momentum ($Z$) and $\Theta$ which are conserved for zonally symmetric flows. In the current work, we prove the existence, uniqueness and stability (with respect to input data) of energy-minimising MLM states in geophysical coordinates (latitude and pressure) given the distribution of mass in conserved variable coordinates $(Z, \Theta)$. We achieve this by adapting the framework set out in \cite[Section 4.5]{cullen2021mathematics} for an incompressible atmosphere. This provides a rigorous principle by which to select a unique background state of the atmosphere suitable for the examination of fluid dynamical instability.

We phrase the problem of finding energy-minimising MLM states as the minimisation of an optimal transport cost over a suitable space of source measures ($\mu$) for a given target measure ($\nu$). In the language of optimal transport, the source space corresponds to geophysical coordinates (latitude and pressure) and the target space is defined by the conserved variable coordinates $(Z, \Theta)$. In physical terms, the optimal transport solution maps the mass from the target space (distribution taken as given) into the source space, while also minimising a cost which is shown to be equal to the energy of the state.

We now describe the general mathematical problem that we study, our main results, and their interpretation in the context of finding energy minimising MLM states. For more detail on this interpretation see Section \ref{sect:relating} and Figure \ref{fig:MLM_schematic}. Let $d\in\N$, $d>1$, and let $\base\subset \R^{d-1}$ be compact. Define the source space $\X\coloneqq \base\times [0,+\infty)$. For a probability density $\bm$ defined on $\base$, define the set
\[
\X_\bm\coloneqq \{(\shz,\svt) \in \X \, : \, 0\leq \svt\leq \bm(\shz)\}.
\]
That is, $\X_\bm$ is the intersection of the subgraph of $\bm$ with the upper half-space.
Let $\mu_\bm$ be the restriction of the Lebesgue measure to $\X_\bm$, and let $\tm$ be a probability measure on $\R^d$ with support contained in a compact set $\Y\subset \R^d$. For a cost function $c:\R^d\times\R^d\to\R$ denote by $\T(\im_\bm,\tm)$ the optimal transport cost from $\im_\bm$ to $\tm$ for the cost $c$.
We study the minimisation problem
\begin{equation}\label{eqn:intro_inf}
\argmin_{\bm\in \Pac(\base)} \T(\im_\bm,\tm).
\end{equation}
Here, $\Pac(\base)$ is the space of probability measures on $\base$ that are absolutely continuous with respect to the $(d-1)$-dimensional Lebesgue measure, and we conflate a measure in $\Pac(\base)$ with its density.

\begin{figure}
    \centering
    \includegraphics[width=0.75\textwidth,trim={3cm 6.2cm 7.2cm 6cm},clip]{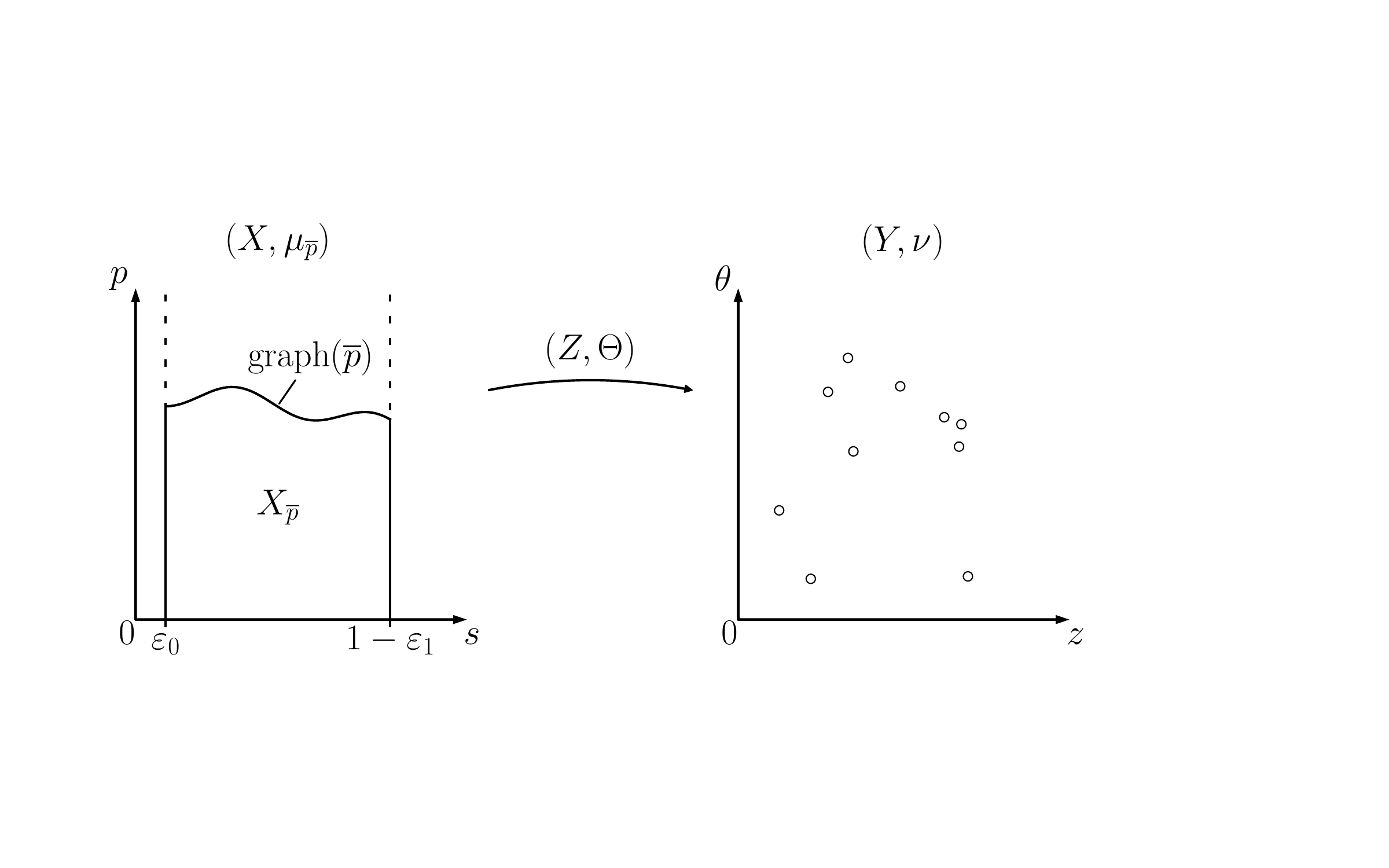}
    \caption{Schematic diagram of an MLM state at a snapshot in time. The measure $\tm$ represents the distribution of mass in the space of zonal angular momentum $\ZAMv$ and potential temperature $\PTv$. It is assumed to have support contained in a compact set $Y\subset (0,+\infty)^2$. The source space $\X=[\eps_0,1-\eps_1]\times[0,+\infty)$ has coordinates $s=\sin(\phi)$, where $\phi$ is latitude, and pressure $p$. The constants $\eps_0,\, \eps_1>0$ exclude the pole and the equator. The surface pressure $\overline{p}$ is a function of $s$ and is an unknown of the problem. It determines the measure $\im_{\bm}$ as the restriction of the Lebesgue measure to the set $X_{\overline{p}}$. Zonal angular momentum $Z$ and potential temperature $\Theta$ are unknown scalar functions of $(s,p)$. The triple $(\bm,\ZAM,\PT)$ is an MLM state for $\tm$ if $(\ZAM,\PT)$ is an admissible transport map from $\im_{\bm}$ to $\tm$.}
    \label{fig:MLM_schematic}
\end{figure}

In the setting of MLM states, $d=2$, the coordinates $\shz$ and $\svt$ of the source space $\X$ represent sine-of-latitude and pressure, respectively, and the set $B$ is a compact interval. The measure $\tm$ represents the mass distribution over the space of zonal angular momentum and potential temperature of the atmosphere at a snapshot in time. An MLM state is determined by a latitude-dependent probability density $\bm$, representing axisymmetric surface pressure, and an admissible transport map from $\im_\bm$ to $\tm$, representing an axisymmetric mass-preserving rearrangement of the zonal angular momentum and potential temperature of the full state. From a physical perspective, the optimal transport map relates air parcels and their conserved properties, given by the parcel locations in $(Z, \Theta)$, to their distribution in geophysical coordinates. The cost function $c$ defined by \eqref{eqn:bgs_cost} represents the energy density, and the corresponding transport cost is the energy of the MLM state. Assumptions \ref{ass:cost} and \ref{ass:ext_twist} are both satisfied by $c$, so our results establish a rigorous principle uniquely selecting MLM states based on energy minimisation.

Our main result is that for a large class of cost functions $c$ (Assumption \ref{ass:cost}), the minimisation problem \eqref{eqn:intro_inf} has a unique solution $\bm$, which has continuous density (Theorem \ref{thm:duality_existence_uniqueness}), and can be recovered by solving an optimal transport problem with a known source measure, an adjusted target measure and an extended cost function (see Theorem \ref{thm:reduction} and Figure \ref{fig:reduction_schematic}). We use this to derive a numerically tractable discretisation of \eqref{eqn:intro_inf}; see Corollary \ref{cor:reduction_semi_discrete}. Moreover, for target measures $\nu$ with common compact support, we show that the set of all such solutions is uniformly bounded and equicontinous, and that solutions are stable with respect to $\nu$ (Theorem \ref{thm:min_conv}). Our existence, uniqueness and stability results generalise analogous results obtained for cost functions defining free surface variants of the semi-geostrophic equations \cite{cheng2016semigeostrophic, cullen2001variational,cullen2019stability} and a model for axissymmetric vortices \cite{cullen2014model}, which rely on the use of convex potentials and their relation to optimal transport with the quadratic cost.

\begin{figure}
    \centering
    \includegraphics[width=0.75\textwidth,trim={3cm 6.2cm 7.2cm 6cm},clip]{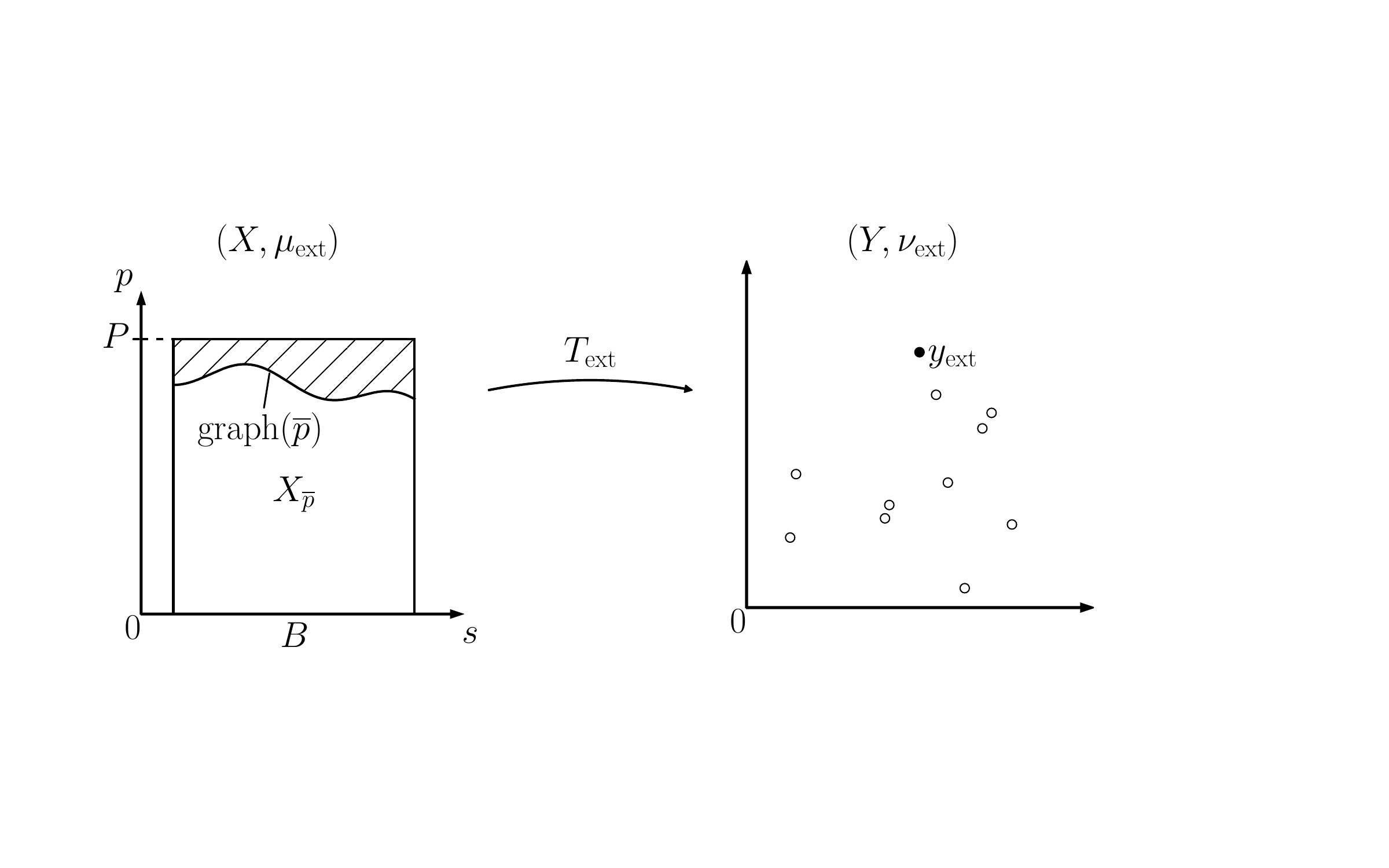}
    \caption{Reduction of Problem \ref{prob:primal} to the optimal transport problem between the `extended' source measure $\ext{\im} = \leb{d}\mres (\base\times [0,P])$ and the `extended' target measure $\ext{\tm}=\nu + (P-1)\delta_{\ext{\tvar}}$. Here, $\ext{\im}=\leb{d}\mres (\base\times [0,P])$, where $\base$ is a compact subset of $\R^{d-1}$ and the constant $P>0$ is an upper bound on the minimiser ${\bm}$ of Problem \ref{prob:primal}. The measure $\tm$ is represented by empty circles. The point $\ext{\tvar}$ is represented by a filled circle, and is outside the support of $\tm$. The cost function is extended to $\ext{\tvar}$ by zero. The optimal transport map $\ext{T}$ sends the hatched cell to $\ext{y}$, the lower boundary of this cell is the graph of $\bm$, and the restriction of $\ext{T}$ to $\X_\bm$ is the optimal transport map from $\im_\bm$ to $\tm$.}
    \label{fig:reduction_schematic}
\end{figure}

\subsection{Motivation and physical problem statement}

Considering atmospheric disturbances as perturbations from a background state, wave activity conservation laws, which describe how disturbance amplitude can propagate from one location to another, have been obtained for arbitrary large amplitude disturbances provided that the flow is adiabatic and frictionless and the background state is itself a solution of the equations of motion \cite{mcintyre:shepherd}. The laws are derived using Lagrangian conservation properties of fluid dynamics to label air masses. For global atmospheric dynamics on the sphere, as modelled by the primitive equations, these labels are potential temperature ($\PT$) and Ertel potential vorticity ($\PV$).

Modified Lagrangian Mean (MLM) states \cite{mcintyre:80a} are background states of the atmosphere defined in terms of $\PV$ and $\PT$ labels. Every $\PV$-contour in an MLM state is axisymmetric and the state is a solution of the equations of motion. Atmospheric motions can be described with reference to an MLM state by displacement of $\PV$-contours relative to their reference positions. Since MLM states are time symmetric (steady) for adiabatic motions, as well as axisymmetric, there are two disturbance conservation laws for pseudoenergy and pseudomomentum \cite{haynes:88,methven:13}. 
This follows generally from Noether's theorem relating conservation laws to symmetries in Hamiltonian dynamical systems \cite{shepherd:90}. However, the definition of the disturbances themselves depends on the choice of MLM state. This is not satisfactory if you would like to answer questions like whether the atmosphere is unstable and disturbances are expected to grow, so a principle by which to uniquely select an MLM state is desirable.

Several heuristics have been used to define and compute MLM states.
In \cite{mcintyre:80a}, an MLM state is defined by an adiabatic rearrangement of fluid parcels, with $\PT$ increasing with height and $\PV$ axisymmetric and increasing polewards on surfaces of constant $\PT$. In \cite{methven2015slowly}, an MLM state with (approximately) the same mass distribution in the space of potential vorticity and potential temperature as the full flow is defined using two time-invariant integral quantities: mass and Kelvin's circulation. In particular, consider a snapshot of a full 3-dimensional atmospheric flow. Let $\PTv>0$, $\delta\PTv>0$, and $\PVv\in\R$. Denote by $\mathcal{V}(\PVv,\PTv)$ the region in the atmosphere bounded vertically by the $\PT$-surfaces $\PT = \PTv-\delta\PTv$ and $\PT = \PTv+\delta\PTv$, and bounded laterally by a closed $\PV$-contour $\PV=\PVv$. The mass $\mathcal{M}(\PVv,\PTv)$ of the 
region
$\mathcal{V}(\PVv,\PTv)$, and Kelvin's circulation $\mathcal{C}(\PVv,\PTv)$, which is the mass-weighted integral of $\PV$ over $\mathcal{V}(\PVv,\PTv)$, 
are computed for points $(\PVv,\PTv)$ on an axis-aligned rectangular grid in $\R^2$. An MLM state is defined by requiring that its mass and circulation integrals are the same as those computed for the full snapshot.
In this way, the MLM state is described as an {\em adiabatic rearrangement} of the full state because if the flow is adiabatic and frictionless then the full state could be accessible from the background state through adiabatic motions of the fluid, which conserve $\PV$, $\PT$, $\mathcal{M}$ and $\mathcal{C}$.

In order to calculate the full properties of a background state, and corresponding perturbations, it is necessary and sufficient to obtain its zonal angular momentum $\ZAM$ and potential temperature $\PT$ as functions of latitude $\phi$ and pressure $p$. Under the condition of zonal symmetry, $\mathcal{C}$ is proportional to $\ZAM$, which in turn can be related to the Earth's radius $a$, planetary rotation rate $\Omega$, latitude $\phi$, and the zonal wind (relative to the Earth's surface) $u$, via
\begin{equation}
\label{eqn:Z}
    \mathcal{C}=2\pi \ZAM=2\pi \left( ua \cos\phi+\Omega a^2 \cos^2\phi \right).
\end{equation}
Importantly, since $\mathcal{C}$ is a functional of Ertel potential vorticity and potential temperature, the zonal wind can only be determined from $\ZAM$ by knowing the latitudes of $\PV$-contours. Potential temperature is defined in terms of temperature, $T$, and pressure, $p$, by
\begin{equation}
\label{eqn:theta}
    \PT = T \left( \frac{p}{p_r} \right)^{-\kappa},
\end{equation}
where $p_r$ is a constant reference pressure and $\kappa=2/7$ is the Poisson constant for a diatomic ideal gas. Consequently, temperature can only be deduced from $\PT$ if pressure is known. 
The energy density of the background state is given in terms of $u$ and $T$ by
\begin{equation}
\label{eqn:energy_u_T}
E=\frac{1}{2} u^2 + C_{\mathrm{p}} T,
\end{equation}
where $C_{\mathrm{p}}$ is the specific heat capacity at constant pressure.
Once these background variables are obtained, it is possible to find any quantities defined as perturbations of $\PV$-contours relative to their positions in the background state (called the ``equivalent latitudes" of $\PV$-contours), as well as perturbation wind and energy. Moreover, in pressure coordinates, the extent of the physical domain is defined by the unknown surface pressure $\overline{p}$, which is a function of $\phi$ and represents the pressure at the lower boundary of the atmosphere. Finite-amplitude stability results for the atmosphere, which yield upper bounds on disturbance energy, depend on knowing $\overline{p}$ \cite{bowman:shepherd:95}.  Obtaining background $\PT$ and $\ZAM$ as functions of $(\phi,p)$, and background surface pressure $\overline{p}$ as a function of $\phi$, is therefore essential to the analysis of perturbation evolution and atmospheric stability. In particular, it is desirable to obtain an MLM state that is an energy minimiser, accounting consistently for the surface pressure.

\subsection{Optimal transport}

We recall some fundamental concepts and results from optimal transport theory that form the basis of our analysis of MLM states. A more thorough exposition of optimal transport theory can be found in several standard texts such as \cite{merigot2021optimal}, \cite{santambrogio2015optimal} or \cite{villani2008optimal}.

The starting point of optimal transport is the Monge problem: given $\X,\,\Y\subset \R^d$, a cost function $c:\X\times\Y\to\R$, Borel probability measures $\mu\in\PM(\X)$ and $\nu\in\PM(\Y)$, find
\begin{equation}\label{eqn:Monge}
\inf\left\{\left.\int_{\X} c(\svar,T(\svar))\, \rd \mu(\svar)\, \right\vert\, T:\X\to\Y \text{ is Borel measureable and }\, T_\#\mu = \nu\right\}.
\end{equation}
Here, $T_\#\mu$ is the probability measure on $\Y$ defined by
\[
(T_\#\mu)(A) = \mu(T^{-1}(A))
\]
for all Borel sets $A\subseteq \Y$,
which is called the pushforward of $\mu$ by $T$. The spaces $X$ and $Y$ are often referred to as the \emph{source} and \emph{target} spaces, respectively, and the measures $\mu$ and $\nu$ as the source and target measures.
If $T$ satisfies the constraint $T_\#\mu = \nu$, then it is said to be an \emph{admissible transport map} from $\mu$ to $\nu$. If $T$ attains the infimum in \eqref{eqn:Monge} it is said to be an optimal transport map from $\mu$ to $\nu$. 

Admissible transport maps do not necessarily exist. For example, if $\mu$ is a single Dirac mass and $\nu$ is a sum of two weighted Dirac masses at distinct locations,
then
the Monge problem is infeasible. The standard relaxation of the Monge problem is the Kantorovich problem, in which the optimal transport cost is defined by
\begin{equation}\label{eqn:OT_cost}
\T(\mu,\nu)\coloneqq \underset{\gamma\in \Gamma(\mu,\nu)}{\inf}\left\{\int_{\X} c(\svar,\tvar)\, \rd \gamma(\svar,\tvar)\right\},
\end{equation}
where
\[
\Gamma(\mu,\nu)\coloneqq \{\gamma\in  \PM(\X\times\Y)\, \vert\, {\pi_\X}_\# \gamma = \mu,
\;
{\pi_\Y}_\# \gamma = \nu\},
\]
and where $\pi_\X$ and $\pi_\Y$ denote the projections onto $\X$ and $\Y$, respectively.
The convex set $\Gamma(\mu,\nu)$ is called the set of
\emph{admissible transport plans} from $\mu$ to $\nu$. Under mild assumptions on the cost function $c$, the Kantorovich problem has a solution $\gamma$, which is called an optimal transport plan from $\mu$ to $\nu$, and $\T(\mu,\nu)$ is the optimal transport cost from $\mu$ to $\nu$. The Kantorovich Duality Theorem states that
\begin{equation}\label{eqn:kant_duality}
\T(\mu,\nu) = \sup_{\psi\in \cts{\Y}}\left\{\int_{\X}\psi^c(\svar)\,\rd\mu(\svar) + \int_\Y \psi(\tvar)\, \rd\nu(\tvar)\right\},
\end{equation}
where $\cts{\Y}$ is the space of continuous functions on $\Y$, and $\psi^c:\X\to\R \cup \{-\infty\}$ is the $c$-transform of $\psi$ defined by
\begin{equation}\label{eqn:c_transform}
    \psi^c(\svar) = \inf_{\tvar\in\Y} \{c(\svar,\tvar) - \psi(\tvar)\}.
\end{equation}
(See, e.g., \cite[Theorem 1.39]{santambrogio2015optimal} or \cite[Theorem 5.10]{villani2008optimal}.)
Again, under mild assumptions on the cost function $c$
and the sets $\X$ and $\Y$, the supremum in \eqref{eqn:kant_duality} is attained. A function $\psi$ that achieves the maximum is called a \emph{Kantorovich potential}.

The Gangbo-McCann Theorem gives conditions under which the Monge problem has a unique solution. (See, for example, \cite{gangbo1996geometry}, \cite[Theorem 12]{merigot2021optimal}, or \cite[Theorem 10.28]{villani2008optimal}.) Moreover, it ties together the three formulations of optimal transport given above. Indeed, if $T$ is the optimal transport map, then $\gamma\coloneqq (\mathrm{id}_\X,T)_\# \mu$ is the unique optimal transport plan, and for any Kantorovich potential $\psi$,
\begin{equation}\label{eqn:psi_gamma_opt}
c(\svar,\tvar)= \psi^c(\svar) + \psi(\tvar)\quad \text{for }\gamma\text{-almost-every }(\svar,\tvar)\in \X\times\Y.
\end{equation}
In particular, the optimal transport map can be recovered from a Kantorovich potential 
via the expression $T(\svar)=(\nabla_x c(\svar,\cdot))^{-1}(\nabla \psi^c(\svar))$. This expression is well defined when $\nabla_x c(\svar,\cdot)$ exists and is injective, a condition known as the \emph{twist} condition.

\subsection{Relating Modified Lagrangian Mean states to optimal transport}\label{sect:relating}

To formulate the definition of an MLM state mathematically, consider a snapshot of a full atmospheric flow. Let $\nu$ be a probability measure on $\R^2$ representing the mass distribution of this snapshot in zonal angular momentum and potential temperature coordinates. We consider states with positive zonal angular momentum. Since potential temperature is positive, and both zonal angular momentum and potential temperature are bounded, $\nu$ has compact support contained in $(0,+\infty)^2$. Let $Y\subset (0,+\infty)^2$ be a compact set containing the support of $\nu$. Let $s=\sin(\phi)$, where $\phi$ is latitude. Let $p_{\mathrm{min}}>0$ be a prescribed and arbitrary minimum pressure, and let $p$ be the positive deviation of pressure from $p_{\mathrm{min}}$. We seek axissymmetric MLM states defined in pressure coordinates $(s,p)$ on a single hemisphere with a spherical cap from the pole and a small region close to the equator removed. That is, for some $\eps_{0},\,\eps_{1}\in (0,1/2)$ , we consider the physical domain
\[
X\coloneqq [\eps_0,1-\eps_1]\times [0,+\infty)
\]
containing points $\svar=(s,p)$. Under the assumption of hydrostatic balance (i.e., that the fluid density multiplied by gravitational acceleration $g$, is the negative vertical gradient of pressure) the fluid density in pressure coordinates $(s,p)$, often called the pseudodensity, is uniform (and equal to $-1/g$) \cite[Chapter 6]{hoskins2014fluid}. An MLM state is then defined by three maps: surface pressure $\overline{p}$, potential temperature $\PT$, and zonal angular momentum $\ZAM$. In the following definition $\Pac([\eps_0,1-\eps_1])$ denotes the space of probability measures on $[\eps_0,1-\eps_1]$ that are absolutely continuous with respect to the Lebesgue measure, and we conflate such measures with their densities.

\begin{defn}[MLM state, c.f. Figure \ref{fig:MLM_schematic}]\label{def:MLM_state}
Let $\overline{p}\in \Pac([\eps_0,1-\eps_1])$.
Define the domain
\[
\X_{\overline{p}} \coloneqq \{(s,p)\in \X\, : \, p\leq \overline{p}(s)\}.
\]
Consider Borel measurable maps $\ZAM,\,\PT: \X_{\overline{p}}\to \R$, with extension by zero to $X$.
The triple $(\overline{p},\ZAM,\PT)$ is an MLM state for $\nu$ if and only if
\begin{equation}\label{eqn:push_forward}
(\ZAM,\PT)_\#\im_{\overline{p}} = \nu,
\end{equation}
where
\[
\im_{\overline{p}}\coloneqq\leb{d}\mres \X_{\overline{p}}
\]
is the restriction of the $d$-dimensional Lebesgue measure to the set $\X_{\overline{p}}$.
\end{defn}

Interpreted physically, the pushforward constraint \eqref{eqn:push_forward} means that $(\overline{p},\ZAM,\PT)$ represents a mass-preserving rearrangement of the snapshot of zonal angular momentum and potential temperature. Mathematically, it means that $(\ZAM,\PT)$ is an \emph{admissible transport map} from $\im_{\overline{p}}$ to $\tm$. 

We now define the energy of an MLM state, and interpret it as a transport cost. Our goal of finding energy-minimising MLM states can then be phrased as minimising the optimal transport cost to $\tm$ over all source measures of the form $\im_{\overline{p}}$ for $\overline{p}:
[\eps_0,1-\eps_1]
\to[0,+\infty)$.

\begin{defn}[Background state cost function]\label{def:bgs_energy}
Define the cost function $c:\X \times \Y\to [0,+\infty)$ by
\begin{equation}\label{eqn:bgs_cost}
c((s,p),(\ZAMv,\PTv))\coloneqq \frac{1}{2}\left(\frac{\ZAMv}{a\sqrt{1-s^2}} - \Omega a\sqrt{1-s^2}\right)^2
+C_{\mathrm{p}}\PTv\left(\frac{p+p_{\mathrm{min}}}{p_r}\right)^\kappa,
\end{equation}
where $a>0$ is the radius of the earth, $\Omega>0$ is the angular velocity of the earth, $C_{\mathrm{p}}>0$ is the specific heat capacity of air at constant pressure, $p_{\mathrm{min}}>0$ is a prescribed and arbitrary minimum pressure, $p_\mathrm{r}>0$ is a constant reference pressure, and $\kappa=2/7$ is the Poisson constant for a diatomic ideal gas.
\end{defn}

\begin{rem}
Using $p_{\mathrm{min}}>0$, we force pressure to be positive, which ensures that the cost function is locally Lipschitz. This is not restrictive when computing MLM states because pressure tending zero corresponds to leaving the atmosphere and entering outer space, so atmospheric data will only ever extend to small but non-zero pressure. However, $p=0$ would be the natural physical boundary condition, so we hope to consider the case $p_{\mathrm{min}}=0$ in future work.
\end{rem}

Note that the cost function $c$ is a rewriting of the energy density (\ref{eqn:energy_u_T}). The total energy $E$ of an MLM state $(\overline{p},Z,\Theta)$ is given by the integral of the energy density over the whole hemispheric domain:
\begin{equation}\label{eqn:bgs_energy}
E(\overline{p},\ZAM,\PT) = \int_{\eps_0}^{1-\eps_1}\int_{0}^{\overline{p}(s)} c\big((s,p),(\ZAM(s,p),\PT(s,p))\big)\, \rd p \,\rd s.
\end{equation}
It follows by definition of $\im_{\overline{p}}$ that
\[
E(\overline{p},\ZAM,\PT) = \int_{\X} c\big(\svar,(\ZAM,\PT)(\svar)\big)\, \rd\im_{\overline{p}}(\svar).
\]
This is precisely the cost of transporting the measure $\im_{\overline{p}}$ to the measure $\nu$ using the admissible transport map $(\ZAM,\PT)$ with the cost function $c$. For a given surface pressure $\overline{p}$, the minimal energy over all $(\ZAM,\PT)$ such that $(\overline{p},\ZAM,\PT)$ is an MLM state is $\T(\im_{\overline{p}},\tm)$. To minimise the energy $E$ over \emph{all} MLM states $(\overline{p},\ZAM,\PT)$, we therefore minimise $\T(\im_{\overline{p}},\tm)$ over all surface pressures $\overline{p}\in \Pac([\eps_0,1-\eps_1])$.

\begin{problem}[MLM state energy minimisation]\label{prob:primal_informal}
Given a distribution $\nu \in \mathcal{P}(Y)$ of zonal angular momentum and potential temperature, find a corresponding MLM state $(\overline{p}_*,\ZAM_*,\PT_*)$ with minimal energy. That is, find
\[
\overline{p}_* \in \argmin_{\overline{p}\in \Pac([\eps_0,1-\eps_1])}\T(\im_{\overline{p}},\nu),
\]
and
\[
(\ZAM_*,\PT_*)\in \argmin_{(\ZAM,\PT)_\# \im_{\overline{p}_*}=\tm} \int_{\X} c\big(\svar,(\ZAM,\PT)(\svar)\big)\, \rd\im_{\overline{p}_*}(\svar).
\]
\end{problem}

We prove existence, uniqueness and stability of solutions of a general formulation of Problem \ref{prob:primal_informal}, defined in Problem \ref{prob:primal}, under assumptions on the cost function and the source and target spaces as set out in Section \ref{sect:prob}. In Proposition \ref{prop:bgs_cost} we show that the cost function \eqref{eqn:bgs_cost} defining the energy of an MLM state satisfies these assumptions, meaning that the results that we prove are applicable to Problem \ref{prob:primal_informal}.

\subsection{Rearrangements and optimal transport in the atmospheric sciences}

The current work builds on the long-standing connection between optimal transport and the atmospheric sciences \cite{cullen2021mathematics}. The cornerstone of this connection is the interpretation of energy-minimising mass-preserving rearrangements of vector fields as optimal transport maps. We use this perspective to interpret the definition of MLM states in terms of optimal transport, which enables us to prove their existence, uniqueness and stability. Meteorologists Cullen and Purser first used \emph{monotone} rearrangements of vector fields in their study of the \emph{semi-geostrophic} (SG) equations \cite{cullen1984extended}, which model the formation of atmospheric fronts, and found them to be minimisers of the \emph{geostrophic energy}. Brenier's celebrated Polar Factorisation Theorem \cite{brenier1991polar} later identified such rearrangements as optimal transport maps for the quadratic cost. Interpreting the geostrophic energy as a transport cost and the energy-minimising rearrangement as an optimal transport map subsequently allowed for the rigorous mathematical analysis of the SG equations in several settings (see \cite{cullen2021mathematics} for a comprehensive overview).

The 3-dimensional free surface \cite{cheng2016semigeostrophic, cullen2019stability}, 2-dimensional shallow water \cite{cullen2001variational}, and 3-dimensional compressible \cite{cullen2003fully,faria2013existence,cullen2014solutions} variants of the SG equations, and the model for forced axisymmetric flows considered in \cite{cullen2014model}, are all continuity equations with velocity defined at each time instant by minimising a transport cost over a space of source measures, similar to Problem \ref{prob:primal_informal}.
In the 3-dimensional free surface SG equations, given a target measure $\tm\in\PM(\R^3)$, the energy functional is defined for $\bm\in\Pac(\R^2)$, modelling surface pressure, by 
\[
\bm \mapsto \T(\mu_\bm,\nu).
\]
Here $\mu_\bm$ is the restriction of the $3$-dimensional Lebesgue measure to the subgraph of $\bm$ intersected with the upper half space, as in Definition \ref{def:MLM_state}, and the cost function is given by
\begin{equation}\label{eqn:SG_cost}
c(x,y) = \frac{1}{2}|x_1 - y_1|^2 + \frac{1}{2}|x_2 - y_2|^2 + x_3 y_3.
\end{equation}
The corresponding energy minimisation problem is a special case of Problem \ref{prob:primal}.
The vertical coordinate of the target space represents potential temperature, so the support of the target measure $\nu$ in contained in the upper half-space. It is straightforward to verify that this cost function satisfies Assumption \ref{ass:cost} and the twist condition (Assumption \ref{ass:twist}).

The 2-dimensional shallow water SG equations are the special case of the $3$-dimensional free surface SG equations where potential temperature is constant. The energy functional then takes the form
\begin{equation}\label{eqn:CSG_energy}
    \rho \mapsto \T(\rho,\nu) + \int \rho^\gamma
\end{equation}
where $\rho$ represents the fluid density, $\nu$ is a given target measure, $c$ is a given transport cost, and $\gamma=2$. This energy minimisation problem is the same as computing the Moreau envelope in the Wasserstein space, as studied in \cite{sarrazin2022lagrangian}. The energy functional for the compressible SG equations is also of the form \eqref{eqn:CSG_energy} but with $\gamma \in (1,2)$.

\subsection{Towards numerical computation of MLM states}\label{sect:intro_numerics}

The MLM state in \cite{methven2015slowly} was computed using a numerical technique called equivalent latitude iteration with potential vorticity inversion (ELIPVI). While numerical solutions were obtained, questions of existence and uniqueness of such solutions, as well as convergence of the numerical scheme, were not addressed. It was also not clear whether or not the computed background states were energy minimising.

We show in Section \ref{sect:reduct} that the problem of finding energy minimising MLM states (Problem \ref{prob:primal_informal}) reduces to an optimal transport problem where a single weighted Dirac mass has been added to the target measure and the cost to that point is set to zero, as illustrated in Figure \ref{fig:reduction_schematic}. This is similar in spirit to the notion of partial optimal transport considered in \cite{levy2022partial} and used to generate Lagrangian meshes with free boundaries.
With a view to solving Problem \ref{prob:primal_informal} numerically, in Corollary \ref{cor:reduction_semi_discrete} we consider the case where the mass distribution $\nu$ is a discrete measure. This corresponds precisely to the setting considered in \cite{methven2015slowly}. There, mass and circulation integrals $\mathcal{M}(\PVv,\PTv)$ and $\mathcal{C}(\PVv,\PTv)$ are computed for the full 3-dimensional atmospheric state for points $(\PVv,\PTv)$ on an axis-aligned rectangular grid in $\R^2$. Correspondingly, $\nu$ would be given by
\[
\nu = \sum_{(\PVv,\PTv)}\mathcal{M}(\PVv,\PTv)\delta_{\left(\frac{\mathcal{C}(\PVv,\PTv)}{2\pi},\PTv\right)},
\]
where we recall that the circulation is proportional to zonal angular momentum for axissymmetric flow \eqref{eqn:Z}. By Corollary \ref{cor:reduction_semi_discrete}, Problem \ref{prob:primal_informal} then reduces to a standard semi-discrete optimal transport problem, which is numerically tractable \cite[Section 4]{merigot2021optimal}. Hence, our analysis not only provides rigorous means by which to select an MLM state given a full atmospheric flow, with guarantees of existence, uniqueness and stability, it also provides a tractable numerical method for computing such states. Implementation of this method using atmospheric data will be the subject of a separate work.

\subsection{Notation and conventions}
For $d\in \N = \{1,2,3,\ldots\}$, let $A\subseteq \R^d$ be a Borel set. We denote the interior of $A$ by $\mathrm{int}(A)$, the boundary of $A$ by $\partial A$, the diameter of $A$ by $\mathrm{diam}(A)$, and the characteristic function of $A$ by $\chfun_A$. That is,
\[
\chfun_A(x) = 
\begin{cases}
1 \text{ if } x\in A,\\
0 \text{ otherwise.}
\end{cases}
\]
For a function $f:A\to \R$, we denote by $\mathrm{graph}(f)$ its graph, that is, $\mathrm{graph}(f) \coloneqq \{(x,f(x))\, :\, x\in A\}$.
We denote the Lebesgue measure of dimension $d$ by $\leb{d}$, and its restriction to $A$ by $\leb{d}\mres A$. The space of Borel probability measures on $A$ is denoted by $\PM(A)$. We denote by $\spt(\mu)$ the support of a measure $\mu\in \PM(A)$. The set of Borel probability measures on $A$ that are absolutely continuous with respect to $\leb{d}$ is denoted by $\Pac(A)$. We conflate a measure $\mu\in\Pac(A)$ with its density with respect to $\leb{d}$, and the corresponding element of $L^1(A)$. We denote by $\cts{A}$ and $\ctsbdd{A}$ the spaces of continuous functions on $A$ and bounded continuous functions on $A$, respectively.
When $A$ is a compact set, we equip $\cts{A}$ with the uniform norm, which we denote by $\|\cdot\|_{\cts{A}}$.
Given Borel sets $A,\, B \subseteq \R^d$, a cost function $c:A\times B\to \R$, and measures $\mu\in \PM(A)$ and $\nu\in\PM(B)$, we denote by $\T(\mu,\nu)$ the optimal transport cost from $\mu$ to $\nu$. For a Borel measureable function $f:A\to B$, the pushforward of $\mu$ by $f$ is the measure $f\# \mu\in \PM(B)$ defined by $(f\# \mu)(U) \coloneqq \mu(f^{-1}(U))$ for Borel sets $U\subseteq B$. For $\phi\in\cts{A}$ and $\psi\in\cts{B}$ we denote by $\phi\oplus\psi$ the element of $\cts{A\times B}$ defined by $\left(\phi\oplus\psi\right)(x,y) = \phi(x)\oplus\psi(y)$. For $\mu\in \PM(A)$ and $\nu\in \PM(B)$ we denote by $\mu\otimes \nu$ the product measure defined by $(\mu\otimes\nu)(U\times V)=\mu(U)\nu(V)$ for Borel sets $U\subseteq A$ and $V\subseteq B$.

\subsection{Outline and main contributions}

A precise mathematical statement of the energy minimisation problem, Problem \ref{prob:primal}, studied in this paper is given in Section \ref{sect:prob}. Our main contributions are:
\begin{itemize}
    \item Theorem \ref{thm:duality_existence_uniqueness}, which establishes the existence and uniqueness of solutions via duality, and gives optimality conditions relating the solutions of the primal and dual problems.
    \item Theorem \ref{thm:min_conv}, which establishes stability of solutions with respect to input data.
    \item Theorem \ref{thm:reduction} and Corollaries \ref{cor:reduction_map} and \ref{cor:reduction_semi_discrete}, which reduce the free surface minimisation problem, Problem \ref{prob:primal}, to a standard optimal transport problem, and treat the case of twisted costs and discrete target measures.
\end{itemize}

\section{Problem statement and assumptions}\label{sect:prob}

In this section we precisely state the problem under study in this paper (Problem \ref{prob:primal}), which is a direct generalisation of Problem \ref{prob:primal_informal} for finding energy-minimising MLM states. We also state assumptions under which our results hold (Assumptions \ref{ass:cost}, \ref{ass:twist}, and \ref{ass:ext_twist}), which are all satisfied by the cost defining the energy of an MLM state; see Proposition \ref{prop:bgs_cost}.

Let $d\geq 2$, and let $\base\subset\R^{d-1}$ be a compact set with non-empty interior. Define the source domain $\X\coloneqq\base\times [0,+\infty)$, and let $\Y\subset \R^d$ be a non-empty compact target domain. We denote by $\svar = (\shz,\svt)$ a point in $\X$ and by $\tvar$ a point in $\Y$.

\begin{ass}\label{ass:cost}
The cost function $c:\X\times\Y\to [0,+\infty)$ satisfies the following properties:
\begin{enumerate}[label=\Alph*.]
	\item $c$ is locally Lipschitz.\label{ass:cost_cts}
	\item For all $\shz\in\base$ and all $\tvar\in\Y$ the function $c((\shz,\cdot),\tvar):[0,+\infty)\to[0,+\infty)$ is strictly increasing and unbounded.\label{ass:cost_inc}
    \item The set of functions $\{c((\cdot,\svt),\tvar):\base\to[0,+\infty)\, \vert \, \svt\in [0,+\infty),\,\tvar\in\Y\}$ is equicontinuous.\label{ass:cost_unif_cts_x}
\end{enumerate}
\end{ass}

Unless otherwise stated, we consider cost functions $c:\X\times\Y\to[0,+\infty)$ satisfying only Assumption \ref{ass:cost}.

% Definition of primal functional
\begin{defn}[Primal functional]\label{defn:primal}
For ${\bm}\in\Pac(\base)$ define the set
\[
\X_{{\bm}} \coloneqq \{(\shz,\svt)\in\X\, \vert \, \svt\leq \bm(\shz)\}.
\]
Define $\im:\Pac(\base)\to\Pac(\X),\, \bm\mapsto\im_\bm$ by
\begin{equation}\label{eqn:mu_p}
\im_\bm \coloneqq \leb{d}\mres \X_{{\bm}}.
\end{equation}
For $\tm\in\PM(\Y)$ and a cost function $c:\X\times\Y\to [0,+\infty)$, we define the primal functional $\pf_\tm:\Pac(\base)\to [0,+\infty)$ by
\[
\pf_\tm(\bm)\coloneqq\T(\im_\bm,\tm),
\]
where $\T$ is the optimal transport cost from $\im_\bm$ to $\tm$ for the cost function $c$ \eqref{eqn:OT_cost}.
\end{defn}

\begin{rem}\label{rem:well_def}
For each $\bm\in\Pac(\base)$, $X_\bm$ is well defined up to $\leb{d}$-negligible sets, $\chfun_{\X_\bm}$ is a well-defined element of $L^1(\X)$ and is the Lebesgue density of $\im_\bm$, and $\im_\bm$ is a probability measure. Moreover, by definition, for any $\bm_0,\,\bm_1\in \Pac(\base)$, 
\[
\|\mu_{\bm_0} - \mu_{\bm_1}\|_{L^1(\X)} = \|\bm_0 - \bm_1\|_{L^1(\base)},
\]
and for any $\xi\in \ctsbdd{\X}$,
\[
\int_{\X}\xi\, \rd \im_{\bm_0} = \int_{\base}\int_{0}^{\bm_0(\shz)}\xi(\shz,\svt)\,\rd \svt\, \rd \shz.
\]
\end{rem}

%%%%% Definition of primal problem
\begin{problem}[Primal problem]\label{prob:primal}
Given $\tm\in\PM(\Y)$, find
\[
\bm_*\in \argmin_{\bm\in\Pac(\base)}\pf_{\tm}(\bm).
\]
\end{problem}

When Assumption \ref{ass:cost} holds, for any $\tm\in\PM(\Y)$ Problem \ref{prob:primal_informal} has a unique solution (Theorem \ref{thm:duality_existence_uniqueness}) and this solution has a continuous density. The set of all such solutions is uniformly bounded and equicontinuous (Proposition \ref{prop:bm}), and Problem \ref{prob:primal_informal} is stable with respect to $\tm$ (Theorem \ref{thm:min_conv}).

\begin{ass}[Twist condition]\label{ass:twist}
There exist open sets $\Omega_\X,\,\Omega_\Y\subset \R^d$ such that $X\subset\Omega_\X$ and $\Y\subset\Omega_\Y$, and an extension of the cost function $c$ onto $\Omega_\X\times\Omega_\Y$ such that $c\in \mathcal{C}^1(\Omega_\X\times\Omega_\Y)$ and for all $\svar_0\in \Omega_\X$ the function $\nabla_{\svar} c(\svar_0,\cdot):\Omega_\Y\to\R^d$ is injective.
\end{ass}

The twist condition is a classical sufficient condition for the existence of transport maps \cite[Definition 8]{merigot2021optimal}. We show that when both Assumptions \ref{ass:cost} and \ref{ass:twist} hold, Problem \ref{prob:primal} admits a solution $\bm$ and that there exists a unique optimal transport map between $\im_\bm$ and $\tm$, and that this map is stable with respect to $\nu$; see Theorem \ref{thm:transport_maps}.

In Section \ref{sect:reduct}, we show that under Assumption \ref{ass:cost}, Problem \ref{prob:primal} can be reduced to a single optimal transport problem by appropriate extension of the source and target measures. For the existence of a corresponding optimal transport map, we impose the following condition on the cost function $c$.

\begin{ass}[Extended twist condition]\label{ass:ext_twist}
Assumption \ref{ass:twist} holds and there exists a constant $\ell>0$ such that $\underset{\tvar\in\Y}{\min}\|\nabla_{\svar} c(\svar_0,\tvar)\|_{\R^d}\geq\ell$ for all $\svar_0\in \Omega_\X$.
\end{ass}

Since the cost function \eqref{eqn:bgs_cost} defining the background state energy \eqref{eqn:bgs_energy} satisfies Assumptions \ref{ass:cost} and \ref{ass:ext_twist}, the existence, uniqueness, and stability of energy-minimising MLM states (i.e., solutions of Problem \ref{prob:primal_informal}), follows from Theorems \ref{thm:duality_existence_uniqueness}, \ref{thm:min_conv}, and \ref{thm:transport_maps}. Moreover, by Corollary \ref{cor:reduction_map}, any such state can be found by solving the Monge problem \eqref{eqn:Monge} with an adjusted target measure and an extended cost function.

\section{Existence and uniqueness of optimal surfaces}\label{sect:exist_unique_surf}
In this section we prove the existence, uniqueness and characterisation of solutions of Problem \ref{prob:primal}; see Theorem \ref{thm:duality_existence_uniqueness}. We work in the setting described in Section \ref{sect:prob} and our working hypothesis is that the cost function $c:\X\times\Y\to [0,+\infty)$ satisfies Assumption \ref{ass:cost}.
Our results are obtained by deriving a dual functional
(Definition \ref{defn:dual}), which we write using the $c$-transform defined in \eqref{eqn:c_transform}.

%%% Statement and proof of weak duality
\begin{lem}[Weak duality]\label{lem:weak_duality}
For $\tm\in \PM(\Y)$,
\begin{align}\label{eqn:weak_duality}
\inf_{\bm\in\Pac(\base)}\pf_\tm(\bm)
\geq
\sup_{\psi\in \cts{\Y}}\left\{
\inf_{\bm\in\Pac(\base)}
\left\{
\int_{\X}\psi^c\, \rd\im_\bm + \int_\Y \psi \,\rd\tm
\right\}
\right\}.
\end{align}
\end{lem}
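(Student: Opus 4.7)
The plan is to recognise this as the easy direction of a min-max swap, using the standard Kantorovich duality theorem applied at each fixed source measure. Concretely, I would fix $\bm\in\Pac(\base)$ and apply the duality formula \eqref{eqn:kant_duality} to the pair $(\im_\bm,\tm)$, rewriting
\[
\pf_\tm(\bm) = \T(\im_\bm,\tm) = \sup_{\psi\in\cts{\Y}}\left\{\int_\X \psi^c\,\rd\im_\bm + \int_\Y \psi\,\rd\tm\right\}.
\]
The hypotheses of the Kantorovich Duality Theorem are satisfied uniformly in $\bm$: by Assumption \ref{ass:cost}.A the cost $c$ is continuous (in fact locally Lipschitz) and nonnegative, and $\Y$ is compact, so the standard references (e.g.\ \cite[Theorem 1.39]{santambrogio2015optimal} or \cite[Theorem 5.10]{villani2008optimal}) apply to every source measure $\im_\bm$ without modification.

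From this representation, the inequality follows by the elementary observation that a double extremum satisfies $\inf\sup\ge\sup\inf$. More explicitly, for any fixed $\psi\in\cts{\Y}$ and any $\bm\in\Pac(\base)$, the displayed supremum is at least the bracketed expression evaluated at that particular $\psi$, so
\[
\pf_\tm(\bm)\ge \int_\X \psi^c\,\rd\im_\bm + \int_\Y \psi\,\rd\tm.
\]
Taking the infimum over $\bm$ on both sides (the term $\int_\Y\psi\,\rd\tm$ is constant in $\bm$) and then the supremum over $\psi$ produces exactly \eqref{eqn:weak_duality}.

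There is no genuine obstacle. The only minor point to check is that the integrals on the right-hand side are unambiguous: $\psi^c$ is an infimum of a jointly continuous function over the compact set $\Y$, hence is upper semicontinuous on $\X$, and the bound $c\ge 0$ gives $\psi^c\ge -\|\psi\|_{\cts{\Y}}$, so $\int_\X\psi^c\,\rd\im_\bm\in(-\infty,+\infty]$ is well defined. The harder (strong) duality direction, requiring attainment and a matching upper bound, is deferred to Theorem \ref{thm:duality_existence_uniqueness}; the present lemma is purely the bookkeeping half.
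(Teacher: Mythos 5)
Your argument is exactly the paper's: apply the Kantorovich Duality Theorem to each fixed $\bm\in\Pac(\base)$ to write $\pf_\tm(\bm)$ as a supremum over $\psi\in\cts{\Y}$, then exchange the infimum and supremum up to inequality via the standard $\inf\sup\geq\sup\inf$ observation. The proposal is correct and matches the paper's proof, with the added (harmless) remarks on well-definedness of the integrals.
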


\begin{proof}
By the Kantorovich Duality Theorem (see, e.g., \cite[Theorem 5.10]{villani2008optimal}),
\begin{align*}
\inf_{\bm\in\Pac(\base)}\pf_\nu(\bm)
=\inf_{\bm\in\Pac(\base)} \T(\im_\bm,\tm)
=\inf_{\bm\in\Pac(\base)}
\left\{
\sup_{{\psi}\in \cts{\Y}}
\left\{
\int_{\X}{\psi}^c\, \rd\im_\bm + \int_\Y {\psi} \,\rd\tm
\right\}
\right\}
.
\end{align*}
The infimum over $\bm\in\Pac(\base)$ can be exchanged up to inequality with the supremum over $\psi\in \cts{\Y}$ in the usual way to obtain \eqref{eqn:weak_duality}.
\end{proof}

Lemma \ref{lem:weak_duality} naturally gives rise to 
the following dual functional.

%%%%%%% Definition of dual functional
\begin{defn}[Dual functional]\label{defn:dual}
For $\tm\in\PM(\Y)$, we define the \emph{dual functional} $\df_\tm:\cts{\Y}\to\R$ by
\begin{align*}
\df_\tm(\psi) \coloneqq \inf_{\bm\in\Pac(\base)}\left\{ \int_{\X}\psi^c\, \rd \im_\bm + \int_\Y {\psi} \,\rd\tm \right\}= \inf_{\bm\in\Pac(\base)} \kf(\psi;\im_\bm,\tm),
\end{align*}
where $\kf(\ \cdot\ ;\im_\bm,\tm)$ is the Kantorovich dual function given source measure $\im_\bm$ and target measure $\tm$. In addition, for $\psi\in \cts{\Y}$ we define $\sfct_{\tm,\psi}:\PM(\base)\to\R\cup\{+\infty\}$ by
\begin{align*}
\sfct_{\tm,\psi}(\bm) \coloneqq 
\begin{cases}
\kf(\psi;\im_\bm,\tm) \qquad &\text{if}\quad \bm\in\Pac(\base),\\
+\infty\qquad &\text{otherwise}.
\end{cases}
\end{align*}
In this notation we can rewrite the dual functional as 
\[
\df_\tm(\psi)
= \inf_{p \in \mathcal{P}(B)} \sfct_{\tm,\psi}(p).
\]
\end{defn}

We will show that for each $\psi\in \cts{\Y}$ the infimum of $\sfct_{\tm,\psi}$ is uniquely attained by some $\bm_{\psi}\in \Pac(\base)$ for which we obtain a formula; see  Theorem \ref{thm:subdual}.
We will then show that $\bm$ minimises the primal functional and $\psi$ maximises the dual functional if and only if $\bm=\bm_{\psi}$ and $\psi$ is a Kantorovich potential from $\im_\bm$ to $\tm$; see Theorem \ref{thm:duality_existence_uniqueness}.

\subsection{Characterisation and properties of optimal surfaces}\label{sect:surf}

The aim of this subsection is to show that $\sfct_{\tm,\psi}$ has a unique minimiser and to obtain an explicit expression for this minimiser.
We first derive this expression formally.
Let $\tm\in\PM(\Y)$, $\psi\in \cts{\Y}$ and $\bm\in\Pac(\base)$. By definition,
\[
\sfct_{\tm,\psi}(\bm)=\kf(\psi;\im_\bm,\tm)
=\int_\base f_\psi(\shz,\bm(\shz))\, \rd \shz,
\]
where $f_\psi:\base\times \R\to \R\cup\{+\infty\}$ is defined by
\begin{equation}
\label{eqn:f_psi}
f_\psi(\shz,\ul) \coloneqq 
\begin{cases}
\displaystyle{\int^\ul_0} \psi^c(\shz,\svt)\, \rd \svt  + C_\psi \quad &\text{if }\ul\geq 0,\\
+\infty\quad &\text{otherwise,}
\end{cases}
\end{equation}
and
\[
    C_\psi := \frac{1}{\leb{d-1}(\base)}\int_\Y \psi\,\rd\tm.
\]
By the Fundamental Theorem of Calculus, the partial derivative of $f_\psi$ with respect to $\ul$ is $\psi^c$ on $(0,+\infty)$. Since $c$ is strictly increasing in $\svt$ (Assumption \ref{ass:cost}.\ref{ass:cost_inc}), 
so is $\psi^c$ because it is the pointwise minimum of a family of strictly increasing functions. 
Therefore $\partial_\ul f_\psi$ is strictly increasing in $\ul$ on $(0,+\infty)$, and 
it follows that $f_\psi$ is strictly convex with respect to $\ul$ on the positive real line, and convex with respect to $\ul$ on the whole real line. A formal calculation suggests that the first variation (in the sense of \cite[Definition 7.12]{santambrogio2015optimal}) of $\sfct_{\tm,\psi}$ at $\bm$ is the function
\[
\base\ni \shz \mapsto \pdone{f_\psi}{\ul}(\shz ,\bm(\shz))=\psi^c(\shz,\bm(\shz))\in\R.
\]
Then, if $\bm$ is a minimiser of $\sfct_{\tm,\psi}$ with continuous density, the first-order necessary optimality conditions stated in \cite[Theorem 7.20]{santambrogio2015optimal} would imply that there exists a constant $\cst\in\R$ such that
\begin{equation}
\label{eqn:opt_cond_p}
\begin{cases}
\psi^c(\shz,\bm(\shz))=\cst\quad & \text{for }\shz\in \spt(\bm),\\
\psi^c(\shz,\bm(\shz))\geq \cst \quad &\text{for }\shz\in\base\setminus\spt(\bm).
\end{cases}
\end{equation}
By definition of the $c$-transform, if \eqref{eqn:opt_cond_p} holds then for any $\shz\in\base$,
\[
c((\shz,\bm(\shz)),\tvar) - \psi(\tvar) \geq \cst \quad \forall\, \tvar\in\Y.
\]
Since $c$ and $\psi$ are continuous on the compact set $\Y$, if $\bm(\shz)>0$ then this holds with equality for some $\tvar\in\Y$.
By Assumptions \ref{ass:cost}.\ref{ass:cost_cts}-\ref{ass:cost_inc}, $c((\shz,\cdot),\tvar)$ is continuous and strictly increasing on $[0,+\infty)$, so it is invertible and its inverse is strictly increasing and non-negative. It follows that
\[
\bm(\shz) \geq  c((\shz,\cdot),\tvar)^{-1}(\psi(\tvar) + \cst) \quad \forall\, \tvar\in\Y,
\] 
which implies that
\[
\bm(\shz) \geq  \max_{\tvar\in\Y} 
\left\{
c((\shz,\cdot),\tvar)^{-1}(\psi(\tvar) + \cst)
\right\},
\]
with equality if $\bm(\shz)>0$. This motivates the following definition.

\begin{defn}
    Define $\bmalt: \base \times \Y \times \R \to [0,+\infty)$ by
    \begin{equation}\label{eqn:q}
        \bmalt(\shz,\tvar,\ev)\coloneqq 
        \begin{cases}
            c((\shz,\cdot),\tvar)^{-1}(\ev)  \quad &\text{if }\ev \geq c((\shz,0),\tvar),\\
            0 \quad &\text{otheriwse}.
        \end{cases}
    \end{equation}
\end{defn}

%%%% Unique tau, uniform boundedness of psi - tau, and relative compactness of set of optimal surfaces

The following proposition defines the function $\bm_\psi$ for each $\psi\in\cts{\Y}$, which we show to be the unique minimiser of $\sfct_{\tm,\psi}$ in Theorem \ref{thm:subdual}.
\begin{prop}\label{prop:bm}
For each $\psi\in \cts{\Y}$ there exists a unique constant $\cst_\psi\in\R$ such that the map $\bm_\psi:\base\to\R$ defined by
\begin{equation}\label{eqn:bm}
\bm_\psi(\shz)\coloneqq \max_{\tvar\in\Y}\{\bmalt(\shz,\tvar,\psi(\tvar)+\cst_\psi)\}
\end{equation}
is a probability density, where $\bmalt$ is defined by \eqref{eqn:q}. Moreover, the set
\[
\mathcal{S} \coloneqq \{\bm_\psi\;\vert\; \psi\in \cts{\Y}\}
\]
is uniformly bounded and equicontinuous.
\end{prop}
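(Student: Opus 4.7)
The strategy is to reduce the problem to a one-variable analysis in the shift parameter $\tau$. For each $\psi\in\cts{\Y}$ and $\tau\in\R$, set
\[
\bm_\psi^\tau(\shz)\coloneqq \max_{\tvar\in\Y}\bmalt(\shz,\tvar,\psi(\tvar)+\tau),\qquad I_\psi(\tau)\coloneqq \int_\base \bm_\psi^\tau(\shz)\,\rd\shz.
\]
Since $\bmalt$ is continuous and non-decreasing in its third argument and $\Y$ is compact, $\bm_\psi^\tau$ is jointly continuous in $(\shz,\tau)$ and non-decreasing in $\tau$, and dominated convergence then gives that $I_\psi$ is continuous and non-decreasing. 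To obtain a unique $\cst_\psi$ with $I_\psi(\cst_\psi)=1$, I would first check that $I_\psi(\tau)\to 0$ as $\tau\to-\infty$ -- using that $\psi$ is bounded on compact $\Y$ and $c(\cdot,0,\cdot)$ is continuous on compact $\base\times\Y$, so $\psi(\tvar)+\tau<c((\shz,0),\tvar)$ everywhere for $\tau$ sufficiently negative, forcing $\bmalt\equiv 0$ -- and that $I_\psi(\tau)\to\infty$ as $\tau\to+\infty$; for the latter, fixing any $\tvar_0\in\Y$, Assumption~\ref{ass:cost}.\ref{ass:cost_inc} and continuity of $c((\cdot,N),\tvar_0)$ on compact $\base$ give $c((\shz,N),\tvar_0)\to\infty$ uniformly in $\shz$, so $\bm_\psi^\tau(\shz)\geq N$ for $\tau$ large. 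Uniqueness follows from a pointwise strict-monotonicity argument: on the nonempty open set $\{\bm_\psi^{\tau_2}>0\}$, taking the optimal $\tvar_2$ at $(\shz,\tau_2)$ and invoking strict monotonicity of $c((\shz,\cdot),\tvar_2)$ gives $\bm_\psi^{\tau_1}(\shz)<\bm_\psi^{\tau_2}(\shz)$ whenever $\tau_1<\tau_2$, so $I_\psi$ is strictly increasing on $I_\psi^{-1}((0,\infty))$. The intermediate value theorem then yields the unique $\cst_\psi$.

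\textbf{Equicontinuity and uniform bound.} For the family $\mathcal{S}$, the key observation is that the $c$-transform $\psi^c(\shz,\svt)=\inf_{\tvar\in\Y}(c((\shz,\svt),\tvar)-\psi(\tvar))$ inherits the equicontinuity of $c$ in $\shz$ directly from Assumption~\ref{ass:cost}.\ref{ass:cost_unif_cts_x}, because
\[
|\psi^c(\shz_1,\svt)-\psi^c(\shz_2,\svt)|\leq\sup_{\tvar\in\Y}|c((\shz_1,\svt),\tvar)-c((\shz_2,\svt),\tvar)|,
\]
giving a modulus $\omega$ independent of $\svt$ and $\psi$. Moreover $\psi^c(\shz,\cdot)$ is continuous and strictly increasing (inherited from Assumption~\ref{ass:cost}.\ref{ass:cost_inc}), and a short computation shows that $\bm_\psi$ admits the characterisation
\[
\psi^c(\shz,\bm_\psi(\shz))=\cst_\psi \quad\text{if } \bm_\psi(\shz)>0,\qquad \bm_\psi(\shz)=0 \quad\text{if } \psi^c(\shz,0)\geq\cst_\psi.
\]
Using local Lipschitz continuity (Assumption~\ref{ass:cost}.\ref{ass:cost_cts}) on the compact set $\base\times[0,M]\times\Y$ to invert $\psi^c(\shz_2,\cdot)$ on bounded ranges of $\svt$, with a modulus uniform in $\psi$ and $\shz_2$, and combining with the equicontinuity of $\psi^c$ in $\shz$, one obtains $|\bm_\psi(\shz_1)-\bm_\psi(\shz_2)|\to 0$ uniformly in $\psi$ as $|\shz_1-\shz_2|\to 0$. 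The uniform $L^\infty$ bound on $\mathcal{S}$ then follows from $\int_\base\bm_\psi=1$: if $\bm_\psi(\shz_0)$ were arbitrarily large, equicontinuity would force $\bm_\psi$ to remain large on a ball of definite volume around $\shz_0$, contradicting the integral constraint.

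\textbf{Main obstacle.} The delicate step is making the inversion of $\psi^c(\shz,\cdot)$ quantitative and uniform in $\psi$, since Assumption~\ref{ass:cost}.\ref{ass:cost_inc} only provides pointwise strict monotonicity and not a uniform growth rate. To avoid a circular dependence between equicontinuity and the uniform bound, I would first derive an a priori bound on $\cst_\psi$ from the mean-value observation that there exists $\shz_0\in\base$ with $\bm_\psi(\shz_0)\leq 1/\leb{d-1}(\base)$, combine it with the equicontinuity of $\psi^c$ to localise to a compact range in $\svt$, and then establish equicontinuity and the uniform bound in tandem by a bootstrap argument using compactness and continuity of $c$ on $\base\times[0,M]\times\Y$.
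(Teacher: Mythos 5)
Your first half (existence and uniqueness of $\cst_\psi$) is essentially the paper's own argument: the normalisation map $\tau\mapsto\int_\base\max_{\tvar\in\Y}\bmalt(\shz,\tvar,\psi(\tvar)+\tau)\,\rd\shz$ is continuous, vanishes for $\tau$ sufficiently negative, exceeds $1$ for $\tau$ large, and is strictly increasing wherever positive, so the Intermediate Value Theorem applies. Note, though, that even here the continuity and local boundedness of $\bmalt$ that you assert in passing are not free: they rest on upgrading Assumption \ref{ass:cost}.\ref{ass:cost_inc} to a statement uniform over $\base\times\Y$, which the paper does by a compactness argument showing $\min_{\base\times\Y}c((\shz,\svt),\tvar)\to+\infty$ as $\svt\to+\infty$.

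The genuine gap is in the second half, and you name it yourself: the quantitative, $\psi$-uniform inversion of $\psi^c(\shz,\cdot)$ is the missing ingredient, and the proposed ``bootstrap using compactness and continuity of $c$'' is never carried out. Local Lipschitz continuity (Assumption \ref{ass:cost}.\ref{ass:cost_cts}) bounds increments of $\psi^c(\shz,\cdot)$ from \emph{above}, which is the wrong direction for inversion; what is needed is a \emph{lower} modulus, uniform in $\psi$ and $\shz$, and Assumption \ref{ass:cost}.\ref{ass:cost_inc} gives only pointwise strict monotonicity. The resolution, which is exactly what the paper's Lemma \ref{lem:Q_unifbdd_equicts} provides, is to set
\[
\omega(\delta)\coloneqq\min\left\{c((\shz,\svt+\delta),\tvar)-c((\shz,\svt),\tvar)\;:\;(\shz,\svt,\tvar)\in\base\times[0,P]\times\Y\right\},
\]
where $P$ is a uniform height bound, and to prove $\omega$ is strictly increasing, unbounded and continuous (continuity again via Lemma \ref{lem:rel_comp_pw_max}), hence invertible with $\omega^{-1}$ continuous at $0$; since $\psi^c(\shz,\svt+\delta)-\psi^c(\shz,\svt)\geq\omega(\delta)$ for every $\psi$, this supplies the uniform inversion, after which your optimality characterisation $\psi^c(\shz,\bm_\psi(\shz))=\cst_\psi$ on $\{\bm_\psi>0\}$ does yield equicontinuity of $\mathcal{S}$. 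Be careful also with the order of quantifiers in your closing step: the uniform $L^\infty$ bound should not be deduced from equicontinuity (which you only obtain after localising to $[0,P]$); it is cleaner to get it first, as your a priori bound on $\cst_\psi$ in fact allows — from $\bm_\psi(\shz_0)\leq 1/\leb{d-1}(\base)$ at some $\shz_0$ one gets $\psi(\tvar)+\cst_\psi\leq\max_{\base\times\Y}c\big((\shz,1/\leb{d-1}(\base)),\tvar\big)$ uniformly in $\psi$, and then the uniform divergence of $c$ in $\svt$ bounds $\bm_\psi$ by a $P$ independent of $\psi$ (the paper obtains the same bound via $\cst_\psi\leq\cst_1$). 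With these two compactness constructions supplied, your proposal becomes a correct proof along essentially the paper's lines.
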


Before proving Proposition \ref{prop:bm}, we state two lemmas. Since $\bm_\psi$ is defined as a pointwise maximum over a set of functions, to prove that $\mathcal{S}$ is uniformly bounded and equicontinuous we use the following simple lemma whose proof we omit.

\begin{lem}[\normalfont{c.f., \cite[Box 1.8]{santambrogio2015optimal}}]\label{lem:rel_comp_pw_max}
For arbitrary index sets $I$ and $J$, and a compact set $A\subset \R^n$, 
suppose that 
the family
$\{f_{ij}\}_{(i,j)\in I\times J}\subset \cts{A}$ is uniformly bounded and equicontinuous. For each $i\in I$, define $f_i:A\to\R$ by
\[
f_i(a) = \sup_{j\in J} f_{ij}(a).
\]
Then the set $\{f_i\}_{i\in I}$ is also uniformly bounded and equicontinuous.
\end{lem}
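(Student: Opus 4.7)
The plan is to verify the two conclusions (uniform boundedness and equicontinuity of $\{f_i\}_{i\in I}$) separately and directly from the corresponding hypotheses on $\{f_{ij}\}_{(i,j)\in I\times J}$, using only the order-preservation properties of the supremum.

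For uniform boundedness, I would let $M>0$ be a constant such that $|f_{ij}(a)|\leq M$ for every $(i,j)\in I\times J$ and every $a\in A$, which exists by the uniform boundedness hypothesis. Then for any fixed $i\in I$ and $a\in A$, on the one hand $f_i(a)=\sup_{j\in J}f_{ij}(a)\leq M$ because each term in the supremum is at most $M$; on the other hand, picking any $j_0\in J$ gives $f_i(a)\geq f_{ij_0}(a)\geq -M$. Hence $|f_i(a)|\leq M$ uniformly in $i$ and $a$.

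For equicontinuity, fix $\varepsilon>0$. By equicontinuity of $\{f_{ij}\}_{(i,j)\in I\times J}$ there exists $\delta>0$, depending only on $\varepsilon$, such that $|f_{ij}(a)-f_{ij}(a')|<\varepsilon$ whenever $(i,j)\in I\times J$ and $a,a'\in A$ satisfy $|a-a'|<\delta$. Fix such $a,a'$ and any $i\in I$. For every $j\in J$,
\[
f_{ij}(a)\leq f_{ij}(a')+\varepsilon\leq f_i(a')+\varepsilon,
\]
and taking the supremum over $j$ on the left gives $f_i(a)\leq f_i(a')+\varepsilon$. Interchanging the roles of $a$ and $a'$ yields $|f_i(a)-f_i(a')|\leq\varepsilon$, with $\delta$ independent of $i$. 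This is exactly equicontinuity of $\{f_i\}_{i\in I}$.

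There is no real obstacle here: the argument is a standard one-line manipulation of suprema, and compactness of $A$ plays no essential role beyond providing the ambient space on which $\cts{A}$ is considered. I would present the proof in two short paragraphs matching the two conclusions, and note that the same argument applies with $\sup$ replaced by $\inf$, which may be useful elsewhere in the paper.
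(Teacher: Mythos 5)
Your proof is correct and is precisely the standard supremum manipulation that the paper had in mind when it cited \cite[Box 1.8]{santambrogio2015optimal} and omitted the argument (the uniform bound passes to the supremum, and the inequality $f_{ij}(a)\leq f_{ij}(a')+\varepsilon\leq f_i(a')+\varepsilon$ followed by taking the supremum over $j$ and symmetrising gives equicontinuity with a $\delta$ independent of $i$). Nothing is missing; your observation that compactness of $A$ is inessential here is also accurate.
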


In order to use Lemma \ref{lem:rel_comp_pw_max} to show that $\mathcal{S}$ is uniformly bounded and equicontinuous, we prove the following lemma.

\begin{lem}\label{lem:Q_unifbdd_equicts}
    For any compact interval $U\subset \R$, the set
    \[
    \mathcal{Q} \coloneqq \{\bmalt(\cdot,\tvar,\cdot)\vert_{\base\times U} \}_{\tvar\in\Y}
    \]
    is uniformly bounded and equicontinuous.
\end{lem}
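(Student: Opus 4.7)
The plan is to establish both properties by sequential contradiction arguments, leveraging the compactness of $\base$, $\Y$, and $U$, together with the continuity and strict $\svt$-monotonicity of $c$ guaranteed by Assumption \ref{ass:cost}.

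For \emph{uniform boundedness}, it suffices to show that
\[
P^* := \sup\bigl\{\bmalt(\shz,\tvar,\ev) : (\shz,\tvar,\ev) \in \base \times \Y \times U\bigr\} < +\infty.
\]
Suppose instead that $p_n := \bmalt(\shz_n,\tvar_n,\ev_n) \to +\infty$ along some sequence. Passing to a subsequence via compactness, I would assume $(\shz_n, \tvar_n, \ev_n) \to (\shz^*, \tvar^*, \ev^*)$. By Assumption \ref{ass:cost}.\ref{ass:cost_inc}, pick $P > 0$ with $c((\shz^*,P),\tvar^*) > \max U + 1$. The local Lipschitz continuity of $c$ (Assumption \ref{ass:cost}.\ref{ass:cost_cts}) then gives $c((\shz_n,P),\tvar_n) \to c((\shz^*,P),\tvar^*)$, so $c((\shz_n,P),\tvar_n) > \ev_n$ for large $n$. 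For $n$ large, $p_n > P > 0$, and since $p_n > 0$ forces $c((\shz_n,p_n),\tvar_n) = \ev_n$, strict monotonicity of $c((\shz_n,\cdot),\tvar_n)$ yields $\ev_n = c((\shz_n,p_n),\tvar_n) > c((\shz_n,P),\tvar_n) > \ev_n$, a contradiction.

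For \emph{equicontinuity}, suppose it fails on $\base \times U$ uniformly over $\tvar \in \Y$. Then there exist $\eps > 0$ and sequences $\tvar_n \in \Y$ and $(\shz_i^n, \ev_i^n) \in \base \times U$ for $i = 1, 2$ with $|(\shz_1^n,\ev_1^n) - (\shz_2^n,\ev_2^n)| \to 0$ but $|\bmalt(\shz_1^n,\tvar_n,\ev_1^n) - \bmalt(\shz_2^n,\tvar_n,\ev_2^n)| \geq \eps$. Using uniform boundedness and compactness, I would extract a subsequence along which $\tvar_n \to \tvar^*$, $(\shz_i^n, \ev_i^n) \to (\shz^*, \ev^*)$ (same limit for $i=1,2$), and $p_i^n := \bmalt(\shz_i^n, \tvar_n, \ev_i^n) \to p_i^*$ with $|p_1^* - p_2^*| \geq \eps$, so $p_1^* \neq p_2^*$. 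The target is to derive $p_1^* = p_2^*$. If both $p_i^* > 0$, then $p_i^n > 0$ eventually and $c((\shz_i^n, p_i^n), \tvar_n) = \ev_i^n$; joint continuity of $c$ gives $c((\shz^*, p_1^*), \tvar^*) = c((\shz^*, p_2^*), \tvar^*) = \ev^*$, and strict monotonicity forces $p_1^* = p_2^*$. If, say, $p_1^* = 0$ while $p_2^* > 0$, then passing to the limit in $p_2^n$ yields $c((\shz^*, p_2^*), \tvar^*) = \ev^*$, while the $p_1^n$ sequence splits into the cases $p_1^n = 0$ (so $c((\shz_1^n, 0), \tvar_n) \geq \ev_1^n$) and $p_1^n > 0$ (so $c((\shz_1^n, p_1^n), \tvar_n) = \ev_1^n$); in either case the limit gives $c((\shz^*, 0), \tvar^*) \geq \ev^* = c((\shz^*, p_2^*), \tvar^*)$, again contradicting strict monotonicity at $p_2^* > 0$. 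The remaining case $p_1^* = p_2^* = 0$ is incompatible with $|p_1^* - p_2^*| \geq \eps$.

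The main subtlety to watch is the asymmetric case $p_1^* = 0 \neq p_2^*$ in the equicontinuity argument, where the piecewise definition of $\bmalt$ at the threshold $\ev = c((\shz,0),\tvar)$ must be handled carefully; it is the continuous agreement of the two branches of $\bmalt$ across this threshold that allows a single case-free limit of $c((\shz_1^n, p_1^n), \tvar_n)$ to $c((\shz^*,0),\tvar^*)$ to close the argument.
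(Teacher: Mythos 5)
Your argument is correct, but it takes a genuinely different route from the paper, most notably in the equicontinuity half. For uniform boundedness both proofs are contradiction-by-compactness arguments exploiting that $c((\shz,\cdot),\tvar)$ is strictly increasing, continuous and unbounded (Assumption \ref{ass:cost}.\ref{ass:cost_inc}--\ref{ass:cost_cts}); the paper organises this slightly differently, first proving the statement \eqref{eqn:c_to_infty} that $\min_{\base\times\Y}c((\cdot,\svt_n),\cdot)\to+\infty$ and then reading off a bound $P$ from the characterisation \eqref{eqn:q_max_min}, whereas you contradict unboundedness of $\bmalt$ directly; your version is fine, the paper's has the side benefit that \eqref{eqn:c_to_infty} is reused later in its proof. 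For equicontinuity the paper is constructive: it introduces the ``monotonicity gap'' $\omega(\delta)=\min\{c((\shz,\svt+\delta),\tvar)-c((\shz,\svt),\tvar)\}$ over $\base\times[0,P]\times\Y$, proves (via Lemma \ref{lem:rel_comp_pw_max} and the local Lipschitz bound) that $\omega$ is continuous, strictly increasing and unbounded, and obtains the explicit uniform modulus of continuity $\tilde\omega(\delta)=\omega^{-1}((1+L)\delta)$ in \eqref{eqn:mod_cont}; you instead run a sequential compactness/contradiction argument, passing to limits in the identities $c((\shz_i^n,p_i^n),\tvar_n)=\ev_i^n$ (when $p_i^n>0$) and the inequality $c((\shz_i^n,0),\tvar_n)\geq\ev_i^n$ (when $p_i^n=0$) and invoking strict monotonicity at the limit point. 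Your route is shorter and avoids the auxiliary work on $\omega$, at the price of being purely qualitative (no explicit modulus of continuity), which is all the lemma and its later uses require. Two small points of rigour worth making explicit: when you write that $p_n>0$ forces $c((\shz_n,p_n),\tvar_n)=\ev_n$, this uses that $c((\shz,\cdot),\tvar)$ is continuous, strictly increasing and unbounded so that the inverse in \eqref{eqn:q} is genuinely a right inverse; and in the asymmetric case $p_1^*=0<p_2^*$ you should pass to a further subsequence along which one of the two branches ($p_1^n=0$ or $p_1^n>0$) holds for all $n$ before taking the limit. Neither is a gap, just wording to tighten.
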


\begin{proof}
Let $U\subset \R$ be a compact interval. We first prove that $\mathcal{Q}$ is uniformly bounded. 
\begin{comment}
For a contradiction, suppose that it is not. Then, for all $\n\in\N$, there exists 
$(\shz_n,\tvar_n,\ev_n) \in\base\times \Y \times U$
 such that
\[
\bmalt(\shz_n,\tvar_n,\ev_n)\geq n.
\]
Let $\ev$ be the right endpoint of $U$. By Assumption \ref{ass:cost}.\ref{ass:cost_inc}, $c((\shz_n,\cdot),\tvar_n)$ is strictly increasing, so $\bmalt(\shz_n,\tvar_n,\cdot)$ is increasing. Therefore 
\[
\bmalt(\shz_n,\tvar_n,\ev)\geq\bmalt(\shz_n,\tvar_n,\ev_n)\geq n.
\]
 By definition of $\bmalt$, this implies that
\begin{equation}\label{eqn:all_n_bound}
\ev \geq c((\shz_n,n),\tvar_n) \quad \forall \,n\in\N.
\end{equation}
But Lemma \ref{lem:c_n_unbdd} applied to the sequence $((\shz_n,0,\tvar_n))_{n\in\N}$ implies that the sequence $(c((\shz_n,n),\tvar_n))_{n\in\N}$ is unbounded, which contradicts the uniform bound \eqref{eqn:all_n_bound}. 
Hence, $\mathcal{Q}$ is uniformly bounded.
\end{comment}
Note that
\begin{equation}\label{eqn:q_max_min}
    q(s,y,\sigma) = \max \left\{ 0, \, \min \{ t \in \R : c((s,t),y) \ge \sigma \} \right\}.
\end{equation}
Let $(\svt_n)_{n \in \N}$ be a sequence in $\R$ diverging to $+ \infty$, for each $n\in \N$ let
\[
(\shz_n,\tvar_n) \in \argmin \{ c((\shz,\svt_n),\tvar) :\, \shz \in \base, \, \tvar \in \Y \},
\]
and let $(\shz_*,\tvar_*)$ be a cluster point of the sequence $((\shz_n,\tvar_n))_{n\in \N}$, which exists by compactness of $\base$ and $\Y$. Assume for contradiction that there exists a constant $M>0$ such that 
\[
c((\shz_n,\svt_n),\tvar_n) = \min \{ c((\shz,\svt_n),\tvar) : \shz \in \base, \, \tvar \in \Y \} \leq M \quad \forall \, n \in \N.
\]
Then, by continuity of $c$ (Assumption \ref{ass:cost}.\ref{ass:cost_cts}), for sufficiently large $n\in \N$, $c((\shz_*,\svt_n),\tvar_*)$ is bounded by $M+1$. This contradicts Assumption \ref{ass:cost}.\ref{ass:cost_inc} that $c((\shz_*,\cdot),\tvar_*)$ is unbounded. Hence,
\begin{equation}\label{eqn:c_to_infty}
    \lim_{n \to \infty} \min \{ c((\shz,\svt_n),\tvar) : \,\shz \in \base, \, \tvar \in \Y \}
= + \infty,
\end{equation}
and, in particular, there exists $P>0$ such that
\[
\min \{ c((\shz,P),\tvar) : \,\shz \in \base, \, \tvar \in \Y \}\geq \max_{\ev\in U}\ev.
\]
Then, by \eqref{eqn:q_max_min}, $\mathcal{Q}$ is uniformly bounded by $P$.

    Let $P>0$ be a uniform upper bound for $\mathcal{Q}$. Define $\omega:[0,+\infty)\to[0,+\infty)$ by
    \[
    \omega(\delta) = \min\left\{ 
     c((\shz,\svt+\delta),\tvar) - c((\shz,\svt),\tvar)
    \,:\,
    (\shz,\svt,\tvar)\in \base\times [0,P]\times\Y
    \right\}.
    \]
    This is well defined because $c$ is continuous and $\base\times [0,P]\times\Y$ is compact. We claim that $\omega$ is strictly increasing, unbounded, and continuous. By Assumption \ref{ass:cost}.\ref{ass:cost_inc}, $c((\shz,\cdot),\tvar)$ is strictly increasing for all $(\shz,\tvar)\in \base\times \Y$, so $\omega$ is strictly increasing. By \eqref{eqn:c_to_infty} and the fact that $c$ is bounded on the compact set $\base\times [0,P]\times\Y$, $\omega$ is unbounded.
    Next, for arbitrary $n\in \N$, let $A=[0,n]$. The restriction of $\omega$ to $A$ is the function
    \[
    \delta \mapsto -\max_{j\in J}f_j(\delta),
    \]
    where $J = B\times [0,P]\times \Y$ is compact and, for each $j=(\shz,\svt,\tvar)\in J$, $f_j:A\to\R$ is defined by
    \[
    f_j(\delta) = c((\shz,\svt),\tvar) - c((\shz,\svt+\delta),\tvar).
    \]
    By Assumption \ref{ass:cost}.\ref{ass:cost_cts}, $c$ is Lipschitz on $(\base\times [0,P+n])\times \Y$ so the functions $f_j$ are uniformly bounded and equicontinuous. By Lemma \ref{lem:rel_comp_pw_max}, it follows that the restriction of $\omega$ to $A$ is continuous. Since $n$ was arbitrary, this implies that $\omega$ is continuous. Since $\omega$ is strictly increasing, unbounded, and continuous, it is invertible and its inverse $\omega^{-1}:[0,+\infty)\to[0,+\infty)$ is strictly increasing and continuous. In particular, since $\omega(0)=0$, $\omega^{-1}$ is continuous at $0$ and $\underset{\delta\to 0}{\lim}\omega^{-1}(\delta)=0$.

    We now prove that $\mathcal{Q}$ is equicontinuous. Let $\tvar\in\Y$ and $(\shz_i,\ev_i)\in \base\times U$ for $i\in\{1,2\}$ and denote $\bmalt(\shz_i,\tvar,\ev_i)$ by $\bmalt_i$. Then $\bmalt_i\leq P$ for $i\in \{1,2\}$. Since $c$ is locally Lipschitz, there exists a  Lipschitz constant $L>0$ for $c$ on $(\base\times [0,P])\times \Y$. Therefore, by construction,
    \begin{align}\label{eqn:q_diff_bound}
    \begin{split}
        \omega(\vert \bmalt_2-\bmalt_1 \vert)
        &\leq 
        \left\vert c((\shz_2,\bmalt_2),\tvar) - c((\shz_2,\bmalt_1),\tvar)\right\vert\\
        &\leq \left\vert c((\shz_2,\bmalt_2),\tvar) - c((\shz_1,\bmalt_1),\tvar)\right\vert +
        \left\vert c((\shz_1,\bmalt_1),\tvar) - c((\shz_2,\bmalt_1),\tvar)\right\vert\\
        &\leq \left\vert \ev_2 - \ev_1 \right\vert + L\|\shz_2 - \shz_1\|_{\R^{d-1}}
        \\&\leq (1+L)\| (\shz_2,\ev_2) - (\shz_1,\ev_1)\|_{\R^{d}}.
    \end{split}
    \end{align}
    Since $\omega$ is invertible and its inverse is strictly increasing, applying $\omega^{-1}$ to both sides of \eqref{eqn:q_diff_bound} gives
    \begin{equation}\label{eqn:mod_cont}
    \vert \bmalt(\shz_2,\tvar,\ev_2)-\bmalt(\shz_1,\tvar,\ev_1) \vert = 
    \vert \bmalt_2-\bmalt_1 \vert \leq \omega^{-1}\left((1+L)\| (\shz_2,\ev_2) - (\shz_1,\ev_1)\|_{\R^{d}}\right).
    \end{equation}
    Define the function $\tilde{\omega}:[0,+\infty)\to[0,+\infty)$ by $\tilde{\omega}(\delta)=\omega^{-1}((1+L)\delta)$. Then $\underset{\delta\to 0}{\lim}\tilde{\omega}(\delta)=0$, and by \eqref{eqn:mod_cont}, $\tilde{\omega}$ is a modulus of continuity for every function in $\mathcal{Q}$, which implies that $\mathcal{Q}$ is equicontinuous.
\end{proof}

We now prove Proposition \ref{prop:bm}.

\begin{proof}[Proof of Proposition \ref{prop:bm}]
Let $\psi\in \cts{\Y}$, and let $\cst_0,\,\cst_1\in \R$ satisfy $\cst_0\leq \cst_1$. 
Since $\Y$ is compact and $\psi$ is continuous, there exists a compact set $U\subset\R$ such that $\psi(\tvar)+\cst\in U$ for all $(\tvar,\cst)\in \Y\times [\cst_0,\cst_1]$. By Lemma \ref{lem:Q_unifbdd_equicts}, the set
\[
\mathcal{Q}\coloneqq \{\bmalt(\cdot,\tvar,\cdot)\vert_{\base\times U}\}_{\tvar\in\Y}
\]
is uniformly bounded and equicontinuous. This implies that the functions $\bmalt_{\psi,\tvar}:\base\times [\cst_0,\cst_1]\to[0,+\infty)$, defined for each $\tvar\in\Y$ by
\[
\bmalt_{\psi,\tvar}(\shz,\cst) = \bmalt(\shz,\tvar,\psi(\tvar)+\cst),
\]
are uniformly bounded and equicontinuous.
By Lemma \ref{lem:rel_comp_pw_max}, the function $\bmalt_{\psi}:\base\times [\cst_0,\cst_1]\to\R$ defined by
\[
\bmalt_{\psi}(\shz,\cst) = \max_{\tvar\in\Y}\{\bmalt(\shz,\tvar,\psi(\tvar)+\cst)\}
\]
is therefore well defined, continuous, and bounded. 
It follows by the Dominated Convergence Theorem that the function $\mathcal{I}:\R\to\R$ defined by
\[
\mathcal{I}(\cst) \coloneqq \int_\base \bmalt_\psi(\shz,\cst)\, \rd \shz
\]
is continuous.

We now find $\cst_0,\,\cst_1\in \R$ such that $\mathcal{I}(\tau_0)=0$ and $\mathcal{I}(\tau_1)\geq 1$, and we show that $\mathcal{I}$ is strictly increasing wherever it is positive.
Then by the Intermediate Value Theorem, there exists a unique constant $\tau_\psi$ such that $\mathcal{I}(\tau_\psi)=1$, which,
by non-negativity of $\bmalt$, is therefore the unique constant such that $\bm_\psi = \bmalt_\psi(\cdot,\cst_\psi)$ is a probability density.

Let $r=1/\leb{d-1}(\base)$ and define constants
\[
m\coloneqq \min_{(\shz,\tvar)\in\base\times\Y}\, c((\shz,0),\tvar),\qquad M\coloneqq \max_{(\shz,\tvar)\in\base\times\Y}\, c((\shz,r),\tvar),
\]
which are well defined because $c$ is continuous and $\base\times\Y$ is compact. In addition, define the constants
\[
\cst_0 \coloneqq m - \max_{\tvar\in\Y}\,\psi(\tvar),
\qquad
\cst_1 \coloneqq M - \max_{\tvar\in\Y}\, \psi(\tvar),
\]
which depend on $\psi$, and are well defined since $\psi$ is continuous. By construction
\[
\psi(\tvar) + \cst_0 \leq c((\shz,0),\tvar) \qquad \forall \, \shz\in\base,\, \tvar\in\Y,
\]
so, by definition of $\bmalt$,
\begin{align*}
\bmalt_\psi(\shz,\cst_0) = \max_{\tvar\in\Y}\{\bmalt(\shz,\tvar,\psi(\tvar)+\cst_0)\} = 0 \qquad \forall\, \shz\in \base.
\end{align*}
Therefore, $\mathcal{I}(\cst_0)=0$. Since $c((\shz,\cdot),\tvar)$ is strictly increasing for every $\shz\in\base$ and $\tvar\in\Y$, and by definition of $M$ and $\cst_1$, for $\tvar\in\Y$ maximising $\psi$,
\[
M = \psi(\tvar) + \cst_1 \geq c((\shz,r),\tvar) \geq c((\shz,0),\tvar) \qquad \forall \, \shz\in\base.
\]
Since $c\big((\shz,\cdot),\tvar\big)^{-1}$ is increasing, this implies that
\[\bmalt(\shz,\tvar,\psi(\tvar)+\cst_1) = c\big((\shz,\cdot),\tvar\big)^{-1}(\psi(\tvar) + \cst_1)\geq r \qquad \forall \, \shz\in\base.\]
Then, by definition of $r$,
\[
\mathcal{I}(\cst_1) \geq \int_\base \bmalt(\shz,\tvar,\psi(\tvar)+\cst_1)\,\rd\shz \geq \int_\base r\,\rd\shz = 1.
\]
So $\mathcal{I}(\cst_0)=0 \leq 1\leq \mathcal{I}(\cst_1)$, as required.

Let $\tau_\psi\in\R$ satisfy $\mathcal{I}(\tau_\psi) = 1$. We claim that this uniquely defines $\cst_\psi$. By the Intermediate Value Theorem, there exists $\tau\in\R$ such that $0<\mathcal{I}(\tau)<1$, so it is sufficient to show that $\mathcal{I}$ is strictly increasing on $[\cst,+\infty)$. By definition of $\mathcal{I}$ and non-negativity of $\bmalt$, there exists a set $A\subseteq \base$ with positive $\leb{d-1}$ measure such that
\[
0<\bmalt_{\psi}(\shz,\cst) \qquad \forall\, \shz\in A.
\]
Let $\shz\in A$ and take $\tvar\in\Y$ to be a maximiser of $\bmalt(\shz,\cdot,\psi(\tvar)+\cst)$ over $\Y$. 
Then by definition of $\bmalt$, $c((\shz,0),\tvar) < \psi(\tvar) + \cst$ and
\[
\bmalt_{\psi}(\shz,\cst) = \bmalt(\shz,\tvar,\psi(\tvar)+\cst) = c((\shz,\cdot),\tvar)^{-1}(\psi(\tvar) + \cst).
\]
Since $c\big((\shz,\cdot),\tvar\big)^{-1}$ is strictly increasing, for any $\delta>0$
\[
\bmalt_{\psi}(\shz,\cst) = c((\shz,\cdot),\tvar)^{-1}(\psi(\tvar) + \cst) < c((\shz,\cdot),\tvar)^{-1}(\psi(\tvar) + \cst +\delta) \leq \bmalt_{\psi}(\shz,\cst+\delta).
\]
It follows that $\mathcal{I}$ is strictly increasing on $[\tau,+\infty)$, as required.

We have shown that for all $\psi\in \cts{\Y}$ there exists a unique constant $\cst_\psi$ such that $\bm_\psi$ defined by \eqref{eqn:bm} is a probability density. 
We now show that the set $\mathcal{S}$ of all such probability densities is uniformly bounded and equicontinuous. Since $\cst_\psi\leq \cst_1$ for each $\psi\in \cts{\Y}$, by definition of $\cst_1$, the uniform bound
\begin{align*}
\psi(\tvar) + \cst_\psi \leq 
M \qquad \forall\, \tvar\in\Y\;\forall\,\psi\in \cts{\Y}
\end{align*}
holds.
Moreover, by definition of $\bmalt$,
\[
\bmalt(\cdot,\tvar,\ev) = 0 \qquad \forall\, \tvar\in\Y\;\forall\, \ev\leq m.
\]
Therefore
\[
\mathcal{S}_0 \coloneqq \{q(\cdot,\tvar,\psi(\tvar)+\cst_\psi) \, \vert\, \tvar\in\Y,\, \psi\in\cts{\Y}\} \subseteq \{q(\cdot,\tvar,\ev) \, \vert\, \tvar\in\Y,\, \ev\in[m,M]\}\cup \{0\},
\] 
which is uniformly bounded and equicontinuous by Lemma \ref{lem:Q_unifbdd_equicts}. Then by Lemma \ref{lem:rel_comp_pw_max} (with index sets $I=\cts{\Y}$ and $J=\Y$), $\mathcal{S}$ is also uniformly bounded and equicontinuous, as required.
\end{proof}

%%%%%%% Existence of minimisers of H and optimality conditions
%%%% (strict-)convexity of f_\psi
Having shown that the functions are $\bm_\psi$ are well defined and form a uniformly bounded and equicontinuous subset of $\cts{\Y}$, we now show that $f_\psi(\shz,\cdot)$ defined by \eqref{eqn:f_psi} is strictly convex on $[0,+\infty)$ for each $\shz\in\base$, and we use this to prove that $\bm_\psi$ is the unique minimiser of $\sfct_{\tm,\psi}$. We also prove additional properties of $f_\psi(\shz,\cdot)$, which we use in Theorem \ref{thm:dual} to derive optimality conditions for maximisers of the dual functional.

\begin{lem}\label{lem:f_psi_conv}
For each $\psi\in\cts{\Y}$ and $\shz\in\base$, the function $f_\psi(\shz,\cdot)$ defined by \eqref{eqn:f_psi} satisfies
\begin{equation}\label{eqn:f_ineq}
f_\psi(\shz,\ula)>f_\psi(\shz,\ul) + \psi^c(\shz,\ul)(\ula-\ul) \quad \forall \, \ula,\, \ul\in [0,+\infty),\, \ula\neq \ul.
\end{equation}
Moreover, it is proper, convex, and lower semi-continuous on $\R$, and strictly convex on $[0,+\infty)$.
\end{lem}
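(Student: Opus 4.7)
The plan is to reduce every assertion to the single fact that $\psi^c(\shz,\cdot)$ is strictly increasing on $[0,+\infty)$. First I would verify this directly: since $\Y$ is compact and $c,\psi$ are continuous, for any $\svta>\svt\geq 0$ the infimum defining $\psi^c(\shz,\svta)$ is attained by some $\tvar_*\in\Y$, whence
\[
\psi^c(\shz,\svt)\leq c((\shz,\svt),\tvar_*)-\psi(\tvar_*)<c((\shz,\svta),\tvar_*)-\psi(\tvar_*)=\psi^c(\shz,\svta),
\]
using Assumption \ref{ass:cost}.\ref{ass:cost_inc} for the strict inequality. I would also note that $\psi^c(\shz,\cdot)$ is locally bounded on $[0,+\infty)$, since $c$ is continuous on the compact set $\base\times[0,M]\times\Y$ for any $M>0$ and $\psi$ is bounded on $\Y$; this makes the integral in \eqref{eqn:f_psi} finite.

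For the inequality \eqref{eqn:f_ineq}, I would split into the cases $\ula>\ul$ and $\ula<\ul$. In the first case,
\[
f_\psi(\shz,\ula)-f_\psi(\shz,\ul)-\psi^c(\shz,\ul)(\ula-\ul)=\int_\ul^{\ula}\bigl(\psi^c(\shz,\svt)-\psi^c(\shz,\ul)\bigr)\,\rd\svt.
\]
The integrand is nonnegative by monotonicity and strictly positive for $\svt>\ul$, so the integral is strictly positive (for instance, it is bounded below by $\tfrac{\ula-\ul}{2}(\psi^c(\shz,(\ul+\ula)/2)-\psi^c(\shz,\ul))>0$). The case $\ula<\ul$ is symmetric, using $\psi^c(\shz,\svt)<\psi^c(\shz,\ul)$ for $\svt\in[\ula,\ul)$.

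The remaining properties follow routinely. Properness holds because $f_\psi(\shz,0)=C_\psi\in\R$ and $f_\psi$ is never $-\infty$. On $[0,+\infty)$, $f_\psi(\shz,\cdot)$ is an integral of a locally bounded function, hence continuous; it is $+\infty$ elsewhere; combining these, $f_\psi(\shz,\cdot)$ is lower semi-continuous on $\R$ (including at $\ul=0$, where the restriction from the right is continuous and the restriction from the left is $+\infty$). Strict convexity on $[0,+\infty)$ follows from \eqref{eqn:f_ineq} by the standard sub-gradient argument: for $\ul_0\neq\ul_1$ in $[0,+\infty)$, $\lambda\in(0,1)$, and $\ul_\lambda=\lambda\ul_0+(1-\lambda)\ul_1$, applying \eqref{eqn:f_ineq} at $\ul_0$ and $\ul_1$ against $\ul_\lambda$ and combining with weights $\lambda$ and $1-\lambda$ gives $\lambda f_\psi(\shz,\ul_0)+(1-\lambda)f_\psi(\shz,\ul_1)>f_\psi(\shz,\ul_\lambda)$. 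Convexity on all of $\R$ then follows because extending a convex function on $[0,+\infty)$ by $+\infty$ on $(-\infty,0)$ preserves convexity.

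The only real obstacle is the strict monotonicity of $\psi^c(\shz,\cdot)$, which needs compactness of $\Y$ so that the infimum in the definition of the $c$-transform is attained; once that is in hand, everything else reduces to elementary calculus and the standard equivalence between monotone subgradients and (strict) convexity.
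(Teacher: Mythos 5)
Your proof is correct and follows essentially the same route as the paper: both reduce everything to strict monotonicity of $\psi^c(\shz,\cdot)$ in the vertical variable (which you justify, as the paper's ``pointwise minimum of strictly increasing functions'' claim implicitly requires, by attainment of the infimum over the compact set $\Y$) and then integrate. The only difference is technical rather than structural: the paper obtains \eqref{eqn:f_ineq} via the Fundamental Theorem of Calculus and the Mean Value Theorem, using that $\psi^c$ is locally Lipschitz, whereas you bound $\int_\ul^{\ula}\bigl(\psi^c(\shz,\svt)-\psi^c(\shz,\ul)\bigr)\,\rd\svt$ from below using monotonicity alone, so continuity of $\psi^c$ is never needed.
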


\begin{proof}
Let $\ul\in [0,+\infty)$ and consider $\ula\in (\ul,+\infty)$. By the Fundamental Theorem of Calculus the partial derivative of $f_\psi$ with respect to $\ul$ is $\psi^c$. Since $c$ is locally Lipschitz, $\psi^c$ is locally Lipschitz and therefore continuous (c.f., \cite[Box 1.8]{santambrogio2015optimal}). For each $\shz\in\base$, $\psi^c(\shz,\cdot)$ is strictly increasing because it is the pointwise minimum of a set of strictly increasing functions. Since $f_\psi(\shz,\cdot)$ is continuous on $[\ul,\ula]$ and continuously differentiable on $(\ul,\ula)$, by the Mean Value Theorem there exists $r\in (\ul,\ula)$ such that
\begin{equation}\label{eqn:s_l_ts}
\frac{f_\psi(\shz,\ula)-f_\psi(\shz,\ul)}{\ula-\ul} = \pdone{f_{\psi}}{\ul}(\shz,r) = \psi^c(\shz,r) > \psi^c(\shz,\ul).
\end{equation}
The inequality \eqref{eqn:s_l_ts} rearranges to give \eqref{eqn:f_ineq}. An analogous argument holds when $\ula<\ul$. Therefore $f_\psi(\shz,\cdot)$ is strictly convex on $[0,+\infty)$. Since $f_\psi(\shz,0)=0$ and $f_\psi(\shz,\ul)=+\infty$ for $\ul<0$, it follows that $f_\psi(\shz,\cdot)$ is proper, convex, and lower semi-continuous.
\end{proof}

%[Existence, uniqueness and optimality conditions for minimisers of $\sfct_\psi$]
We now state and prove the main result of this section.
\begin{thm}
\label{thm:subdual}
For $\tm\in\PM(\Y)$ and $\psi\in\R^n$, $\bm_\psi$ defined by \eqref{eqn:bm} is the unique minimiser of $\sfct_{\tm,\psi}$ over $\Pac(\base)$. In particular,
\begin{equation}
\label{eqn:opt_cond_p_lem}
\begin{cases}
\psi^c(\shz,\bm_\psi(\shz)) = \cst_\psi \quad & \text{if }\bm(\shz)>0,\\
\psi^c(\shz,\bm_\psi(\shz)) \geq \cst_\psi \quad &\text{if }\bm(\shz)=0.
\end{cases}
\end{equation}
\end{thm}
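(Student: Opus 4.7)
The plan is to exploit the strict convexity of $f_\psi(\shz,\cdot)$ established in Lemma \ref{lem:f_psi_conv} to reduce the minimisation of $\sfct_{\tm,\psi}$ to pointwise optimality conditions, and then verify directly from the definition \eqref{eqn:bm} that $\bm_\psi$ satisfies those conditions. The proof therefore breaks into two logically independent steps: (i) prove that $\bm_\psi$ satisfies \eqref{eqn:opt_cond_p_lem}, and (ii) show that any $\bm\in\Pac(\base)$ satisfying \eqref{eqn:opt_cond_p_lem} in place of $\bm_\psi$ is the unique minimiser of $\sfct_{\tm,\psi}$.

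For step (i), I would unpack the definition of $\bm_\psi(\shz)$ as a pointwise maximum. If $\bm_\psi(\shz)>0$, then since $\Y$ is compact and $\bmalt(\shz,\cdot,\psi(\cdot)+\cst_\psi)$ is continuous on $\Y$ (Lemma \ref{lem:Q_unifbdd_equicts}), there is a maximiser $\tvar^\ast\in\Y$ with $\bmalt(\shz,\tvar^\ast,\psi(\tvar^\ast)+\cst_\psi)=\bm_\psi(\shz)>0$, which by \eqref{eqn:q} forces $c((\shz,\bm_\psi(\shz)),\tvar^\ast)=\psi(\tvar^\ast)+\cst_\psi$, so $\psi^c(\shz,\bm_\psi(\shz))\leq \cst_\psi$. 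For the reverse inequality, observe that for \emph{every} $\tvar\in\Y$ either $\psi(\tvar)+\cst_\psi\leq c((\shz,0),\tvar)$, in which case $c((\shz,\bm_\psi(\shz)),\tvar)\geq c((\shz,0),\tvar)\geq \psi(\tvar)+\cst_\psi$ by Assumption \ref{ass:cost}.\ref{ass:cost_inc}, or else $c((\shz,\cdot),\tvar)^{-1}(\psi(\tvar)+\cst_\psi)=\bmalt(\shz,\tvar,\psi(\tvar)+\cst_\psi)\leq \bm_\psi(\shz)$, from which monotonicity of $c((\shz,\cdot),\tvar)$ gives $c((\shz,\bm_\psi(\shz)),\tvar)\geq \psi(\tvar)+\cst_\psi$. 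Either way $c((\shz,\bm_\psi(\shz)),\tvar)-\psi(\tvar)\geq \cst_\psi$ for all $\tvar\in\Y$, so $\psi^c(\shz,\bm_\psi(\shz))\geq \cst_\psi$. This same dichotomy handles the case $\bm_\psi(\shz)=0$ and yields the second line of \eqref{eqn:opt_cond_p_lem}.

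For step (ii), I would apply the strict convexity inequality \eqref{eqn:f_ineq} from Lemma \ref{lem:f_psi_conv} pointwise with $\ul=\bm_\psi(\shz)$ and $\ula=\bm(\shz)$ for an arbitrary $\bm\in\Pac(\base)$, and integrate over $\base$ to obtain
\begin{equation*}
\sfct_{\tm,\psi}(\bm)\,\geq\,\sfct_{\tm,\psi}(\bm_\psi)+\int_\base \psi^c(\shz,\bm_\psi(\shz))\big(\bm(\shz)-\bm_\psi(\shz)\big)\,\rd\shz,
\end{equation*}
with strict inequality unless $\bm=\bm_\psi$ almost everywhere on $\base$. Splitting the integral over $A\coloneqq\{\shz\in\base:\bm_\psi(\shz)>0\}$ and its complement, using $\psi^c(\shz,\bm_\psi(\shz))=\cst_\psi$ on $A$ and the mass conservation $\int_\base(\bm-\bm_\psi)\,\rd\shz=0$, the remainder simplifies to $\int_{\base\setminus A}(\psi^c(\shz,0)-\cst_\psi)\,\bm(\shz)\,\rd\shz$, which is non-negative by the second line of \eqref{eqn:opt_cond_p_lem}.

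The only subtle point I foresee is justifying that the integration step is not spoiled by points where $\bm_\psi(\shz)=0$ and $\bm(\shz)>0$: there the linearisation is of $f_\psi(\shz,\cdot)$ at the boundary of its effective domain, but since $\psi^c(\shz,\cdot)$ is continuous and equals the right derivative of $f_\psi(\shz,\cdot)$ at $0$, inequality \eqref{eqn:f_ineq} remains valid with this right-derivative as the subgradient, and the argument above goes through unchanged. Uniqueness then follows from the strict inequality in \eqref{eqn:f_ineq} on any set of positive $\leb{d-1}$-measure where $\bm\neq \bm_\psi$.
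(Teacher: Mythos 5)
Your proposal is correct and follows essentially the same route as the paper's proof: verify the pointwise conditions \eqref{eqn:opt_cond_p_lem} directly from the definition of $\bmalt$ via the same case dichotomy, then integrate the strict convexity inequality \eqref{eqn:f_ineq} of Lemma \ref{lem:f_psi_conv}, split over $\{\bm_\psi>0\}$ and $\{\bm_\psi=0\}$, and use mass conservation plus non-negativity of $\bm$. The boundary subtlety you flag is already covered since \eqref{eqn:f_ineq} is stated for all $\ul\in[0,+\infty)$, including $\ul=0$.
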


\begin{proof}
Let $\psi\in\R^n$. We first show that \eqref{eqn:opt_cond_p_lem} holds. (These are similar to the first-order optimality conditions \eqref{eqn:opt_cond_p} that were formally derived above.) For simplicity, denote $\bm_\psi$ by $\bm$ and $\cst_\psi$ by $\cst$. 
By Proposition \ref{prop:bm}, $p$ is a continuous probability density on $\base$, so it is defined pointwise and is non-negative. 
Recall that 
\[
\bm(\shz) = \underset{\tvar\in\Y}{\max}\left\{\bmalt(\shz,\tvar,\psi(\tvar)+\cst)\right\}.
\]
For $x\in\base$ such that $\bm(x)=0$, by definition,
\[
\bmalt(\shz,\tvar,\psi(\tvar)+\cst)=0 \quad \forall\, \tvar\in\Y,
\]
which implies that $\psi(\tvar)+\cst \leq c\big((\shz,0),\tvar\big)$ for all $\tvar\in\Y$, so
\[
\cst \leq \min_{\tvar\in\Y}\left\{c\big((\shz,0),\tvar\big)-\psi(\tvar)\right\} = \psi^c(\shz,\bm(\shz)).
\]
Now consider $x\in\base$ such that $\bm(x)>0$. By definition, there exists $\tvara\in\Y$ such that
\[
\bm(\shz) = \bmalt(\shz,\tvara,\psi(\tvara)+\cst) = c((\shz,\cdot),\tvara)^{-1}(\psi(\tvara)+\cst),
\]
which implies that
\begin{equation}\label{eqn:l_eq_cmw}
\cst = c((\shz,\bm(\shz)),\tvara) - \psi(\tvara).
\end{equation}
Moreover, for any $\tvar\in\Y$ it holds that
\begin{equation}\label{eqn:pgeqq}
\bm(\shz)\geq \bmalt(\shz,\tvar,\psi(\tvar)+\cst)=
\begin{cases}
c((\shz,\cdot),\tvar)^{-1}(\psi(\tvar)+\cst) \quad &\text{if }\psi(\tvar)+\cst \geq c((\shz,0),\tvar),\\
0 \quad &\text{otheriwse}.
\end{cases}
\end{equation}
Since $c((\shz,\cdot),\tvar)$ is increasing for all $\tvar\in\Y$ (Assumption \ref{ass:cost}.\ref{ass:cost_inc}), and $\bm(\shz)>0$, \eqref{eqn:pgeqq} implies that
\begin{equation}
\label{eqn:c_geq}
\begin{cases}
c\big((\shz,\bm(\shz)),\tvar\big)\geq \psi(\tvar) + \cst \quad &\text{if }\psi(\tvar)+\cst \geq c((\shz,0),\tvar),\\
c\big((\shz,\bm(\shz)),\tvar\big)\geq c\big((\shz,0),\tvar\big)> \psi(\tvar) + \cst \quad &\text{otheriwse}.
\end{cases}
\end{equation}
Combining \eqref{eqn:l_eq_cmw} and \eqref{eqn:c_geq} gives
\[
\psi^c(\shz,\bm(\shz)) = \min_{\tvar\in\Y} \left\{c\big((\shz,\bm(\shz)),\tvar\big) - \psi(\tvar)\right\} = c\big((\shz,\bm(\shz)),\tvara\big) - \psi(\tvara) = \cst,
\]
so \eqref{eqn:opt_cond_p_lem} holds.

We now use \eqref{eqn:opt_cond_p_lem} to show that $\bm$ is the unique minimiser of $\sfct_{\tm,\psi}$ over $\PM(\base)$. By Lemma \ref{lem:f_psi_conv}, for any distinct $\ula,\, \ul\in [0,+\infty)$
\begin{equation}\label{eqn:str_cvex}
f_\psi(\shz,\ula)>f_\psi(\shz,\ul) + \psi^c(\shz,\ul)(\ula-\ul).
\end{equation}
If $\bm_0\in\PM(\base)\setminus\Pac(\base)$ then $\sfct_{\tm,\psi}(\bm_0)=+\infty>\sfct_{\tm,\psi}(\bm)$. Let $\bm\neq\bm_0\in\Pac(\base)$. Suppressing the argument $\shz$, \eqref{eqn:opt_cond_p_lem} and \eqref{eqn:str_cvex} give
\begin{align*}
\sfct_{\tm,\psi}(\bm_0)&=\int_{\base}f_\psi(\cdot,\bm_0)\, \rd\shz\\
&>\int_{\base}\left[f_\psi(\cdot,\bm) + \psi^c(\cdot,\bm)(\bm_0-\bm)\right]\, \rd\shz\\
&=\int_{\base}f_\psi(\cdot,\bm) \, \rd \shz 
	+ \cst\int_{\base\cap \{\bm>0\}}(\bm_0-\bm)\, \rd\shz 
	+ \int_{\base\cap \{\bm=0\}}\psi^c(\cdot,\bm)\bm_0\, \rd\shz\\
&\geq\int_{\base}f_\psi(\cdot,\bm) \, \rd \shz 
	+ \cst\int_{\base\cap \{\bm>0\}}(\bm_0-\bm)\, \rd\shz 
	+ \int_{\base\cap \{\bm=0\}}\cst\bm_0\, \rd\shz\\
&=\int_{\base}f_\psi(\cdot,\bm) \, \rd \shz 
	+ \cst\int_{\base}(\bm_0-\bm)\, \rd\shz\\
&=\int_{\base}f_\psi(\cdot,\bm) \, \rd \shz\\
&=\sfct_{\tm,\psi}(\bm).
\end{align*}
Here, the final inequality holds because $\bm_0$ is non-negative, and the penultimate equality holds because $\bm$ and $\bm_0$ are probability densities.
\end{proof}

\subsection{Maximisers of the dual functional}\label{sect:max}

We now show that maximisers of the dual function $\df_\tm$ (Definition \ref{defn:dual}) exist and satisfy a necessary and sufficient first-order optimality condition; see Theorem \ref{thm:dual}. Recall that 
\[
\df_\tm(\psi) = \inf_{\bm\in\Pac(\base)}\, \kf(\,\cdot\,;\im_\bm,\tm),
\]
where $\kf(\,\cdot\,;\im_\bm,\tm)$ is the Kantorovich functional for source and target measures $\im_\bm$ and $\tm$, respectively.
Each Kantorovich functional is concave and upper semi-continuous. As such, $\df_\tm$ is concave and upper-semi continuous. The existence of a maximiser of $\df_\tm$ then follows by restriction to the set $A(\Y)$ defined in Lemma \ref{lem:restriction}, which we show to have compact closure. To establish the optimality condition, we apply \cite[Theorem 2.4.18]{zalinescu2002convex} to obtain an expression for the superdifferential of $\df_\tm$. We see that $\psi$ is a maximiser of $\df_\tm$ if and only if it is a Kantorovich potential from $\im_\bm$ to $\tm$ with $\bm=\bm_\psi$.

Let $P>0$ be a uniform bound for the set $\{\bm_\psi\,\vert\, \psi\in \cts{\Y}\}$, which exists by Proposition \ref{prop:bm}. 
For $\varphi\in \cts{\base\times [0,P]}$, define $\varphi^{\bar{c}}:\Y\to\R$ by
\begin{align*}
\varphi^{\bar{c}}(\tvar)\coloneqq\min_{\svar\in \base\times [0,P]} \left\{c(\svar,\tvar)-\varphi(\svar)\right\}.
\end{align*}
Here, the minimum is taken over the compact set $\base\times [0,P]$, so by continuity of $c$ and $\varphi$ it is attained.
We call $\varphi^{\bar{c}}$ the $\bar{c}$-transform of $\varphi$. (The notation $\bar{c}$ is simply used to indicate that the minimum is taken over the source space $\base\times [0,P]$, distinguishing the $\bar{c}$-transform from the the $c$-transform \eqref{eqn:c_transform}, which is defined by minimisation over the target space $\Y$.)
We say that a function $\psi\in \cts{\Y}$ is $\overline{c}$-concave if there exists $\varphi\in \cts{\base\times [0,P]}$ such that $\psi=\varphi^{\bar{c}}$.

%%%% Restriction to c-concave, min-zero potentials
\begin{lem}\label{lem:restriction}
Define
\begin{align*}
A(\Y) \coloneqq \{\psi\in \cts{\Y}\; \vert \; \psi \text{ is } \bar{c}\text{-concave, and } \min_{\tvar\in Y}\psi(\tvar) = 0\}.
\end{align*}
Then $A(\Y)$ is non-empty and
\begin{equation}\label{eqn:dual_equiv}
\sup_{\psi\in \cts{\Y}} \df_\tm(\psi) = \sup_{\psi\in A(\Y)} \df_\tm(\psi).
\end{equation}
\end{lem}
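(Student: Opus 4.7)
My plan is to establish non-emptiness by a direct construction, and then to prove the supremum identity by a restricted-domain double $c$-transform argument followed by a normalising constant shift.

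For non-emptiness, take $\varphi \equiv 0$ on $\base \times [0,P]$. Then $\varphi^{\bar{c}}(\tvar) = \min_{\svar \in \base \times [0,P]} c(\svar,\tvar)$ is continuous on $\Y$ by continuity of $c$ and compactness of $\base\times[0,P]$. Subtracting the scalar $\min_{\Y}\varphi^{\bar{c}}$ produces a $\bar{c}$-concave function on $\Y$ with minimum zero (shifting $\varphi$ by a constant shifts $\varphi^{\bar{c}}$ by the opposite constant), hence an element of $A(\Y)$.

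For \eqref{eqn:dual_equiv}, the inclusion $A(\Y)\subseteq \cts{\Y}$ gives one direction. For the reverse, fix $\psi \in \cts{\Y}$ and define $\psi_1 \coloneqq (\psi^c\vert_{\base\times[0,P]})^{\bar{c}}$. Here $\psi^c \in \cts{\X}$ by Assumption \ref{ass:cost}.\ref{ass:cost_cts} together with compactness of $\Y$, and $\psi_1 \in \cts{\Y}$ by compactness of $\base\times[0,P]$. By construction $\psi_1$ is $\bar{c}$-concave, and the defining inequality $\psi^c(\svar)\leq c(\svar,\tvar)-\psi(\tvar)$ for all $(\svar,\tvar)$ yields $\psi_1\geq \psi$ on $\Y$. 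The central technical step is then the identity
\[
\psi_1^c(\svar) = \psi^c(\svar) \quad \text{for all } \svar\in \base\times[0,P]:
\]
the inequality $\leq$ follows from $\psi_1\geq \psi$, while the inequality $\geq$ follows by applying the definition of $\psi_1(\tvar)$ at the point $\svar\in\base\times[0,P]$ to obtain $c(\svar,\tvar)-\psi_1(\tvar)\geq \psi^c(\svar)$ and taking the infimum over $\tvar\in \Y$.

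To conclude, invoke Proposition \ref{prop:bm} to see that the minimiser $\bm_{\psi_1}$ of $\sfct_{\tm,\psi_1}$ (unique by Theorem \ref{thm:subdual}) is bounded by $P$, so $\im_{\bm_{\psi_1}}$ is supported in $\base\times[0,P]$, where $\psi_1^c = \psi^c$. Combined with $\psi_1\geq \psi$ on $\Y$, this gives
\[
\df_\tm(\psi_1) = \int_\X \psi_1^c\,\rd\im_{\bm_{\psi_1}} + \int_\Y \psi_1\,\rd\tm \geq \int_\X \psi^c\,\rd\im_{\bm_{\psi_1}} + \int_\Y \psi\,\rd\tm \geq \df_\tm(\psi),
\]
where the last inequality is the definition of $\df_\tm(\psi)$ as an infimum over $\bm$. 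A direct computation using that both $\im_\bm$ and $\tm$ are probability measures shows that $\df_\tm(\psi + k) = \df_\tm(\psi)$ for every constant $k$, and that shifting a $\bar{c}$-concave function by a constant preserves $\bar{c}$-concavity; setting $\tilde{\psi} \coloneqq \psi_1 - \min_\Y \psi_1$ then places us in $A(\Y)$ with $\df_\tm(\tilde{\psi}) = \df_\tm(\psi_1)\geq \df_\tm(\psi)$. The main obstacle is the restricted-domain identity $\psi_1^c = \psi^c$ on $\base\times[0,P]$, which relies crucially on $P$ being a uniform upper bound from Proposition \ref{prop:bm}, so that the measure $\im_{\bm_{\psi_1}}$ sees only this restricted source set and the comparison between $\df_\tm(\psi_1)$ and $\df_\tm(\psi)$ goes through.
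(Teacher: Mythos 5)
Your proof is correct and follows essentially the same route as the paper: both rely on the restricted double transform $\psi\mapsto(\psi^c\vert_{\base\times[0,P]})^{\bar c}$, the facts $\psi_1\geq\psi$ and $\psi_1^c=\psi^c$ on $\base\times[0,P]$ (which the paper cites from Santambrogio, Proposition 1.34, and you prove directly), the uniform bound $P$ from Proposition \ref{prop:bm} via Theorem \ref{thm:subdual}, and invariance of $\df_\tm$ under adding constants to normalise the minimum to zero. The only differences are cosmetic (you shift by the constant at the end rather than inside the double transform, and you compare at the single minimiser $\bm_{\psi_1}$ rather than over all densities bounded by $P$), so the argument is sound as written.
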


\begin{proof}
Let $\psi\in \cts{\Y}$. Since $B\times [0,P]$ is compact and $c$ is locally Lipschitz (Assumption \ref{ass:cost}.\ref{ass:cost_cts}), the functions $c(\cdot,\tvar)-\psi(\tvar)$ for $\tvar\in\Y$ are Lipschitz on $B\times [0,P]$ and all have the same Lipschitz constant. As the pointwise minimum of these functions, $\psi^c$ is Lipschitz on $\base\times [0,P]$. Likewise, $\psi^{c\overline{c}}:\Y\to\R$ defined by
\[
\psi^{c\overline{c}}(\tvar)\coloneqq \min_{\svar\in \base\times [0,P]} \left\{c(\svar,\tvar)-\psi^c(\svar)\right\}
\]
is also Lipschitz. In particular, letting $\lambda = \underset{\tvar\in\Y}{\min}\,\psi^{c\overline{c}}(\tvar)$,
\[\tilde{\psi}\coloneqq \psi^{c\overline{c}} - \lambda = \left(\psi + \lambda\right)^{c\overline{c}}\]
is continuous, $\overline{c}$-concave, and has minimum value zero, so is an element of $A(\Y)$. By \cite[Proposition 1.34]{santambrogio2015optimal}, $\psi^{c\overline{c}c}=\psi^c$ on $\base\times [0,P]$ and $\psi^{c\overline{c}}\geq \psi$. Then, for any $\bm\in \Pac(\base)$ with continuous density such that $\|\bm\|_{\cts{\base}}\leq P$, the support of $\im_\bm$ is contained in $\base\times[0,P]$ and
\begin{align}
\kf(\psi;\im_\bm,\tm) &= \int_\X\psi^c\,\rd\im_\bm + \int_\Y\psi\,\rd\tm \nonumber\\
&\leq \int_\X \psi^{c\overline{c}c}\,\rd\im_\bm + \int_\Y\psi^{c\overline{c}}\,\rd\tm.\nonumber\\
&= \int_\X \left(\psi^{c\overline{c}c} + \lambda\right)\,\rd\im_\bm + \int_\Y\left(\psi^{c\overline{c}} - \lambda\right)\,\rd\tm\nonumber\\
&= \int_\X \tilde{\psi}^{c}\,\rd\im_\bm + \int_\Y\tilde{\psi}\,\rd\tm \nonumber\\
&= \kf(\tilde{\psi};\im_\bm,\tm).\label{eqn:kant_equiv}
\end{align}
Recall that by definition $\kf(\psi;\im_\bm,\tm) = \sfct_{\nu,\psi}(\bm)$, similarly for $\tilde{\psi}$. By Theorem \ref{thm:subdual}, each of the functionals $\sfct_{\nu,\psi}$ and $\sfct_{\nu,\tilde{\psi}}$ has a unique minimser over $\Pac(\base)$, and by Proposition \ref{prop:bm} this minimiser is continuous and bounded by $P$. Taking the minimum over $\bm$ in \eqref{eqn:kant_equiv} therefore gives
\[
\df_\tm(\psi) = \min_{\bm\in\Pac(\base)}\kf(\psi;\im_\bm,\tm) \leq \min_{\bm\in\Pac(\base)}\kf(\tilde{\psi};\im_\bm,\tm) = \df_\tm(\tilde{\psi}).
\]
So, for any $\psi\in \cts{\Y}$ we have found $\tilde{\psi}\in A(\Y)$ such that $\df_\tm(\psi) \leq \df_\tm(\tilde{\psi})$, which establishes \eqref{eqn:dual_equiv}.
\end{proof}

In order to derive optimality conditions using \cite[Theorem 2.4.18]{zalinescu2002convex}, we now show that the map $\bm\mapsto\kf(\psi,\im_\bm,\tm)=H_{\tm,\psi}(\bm)$ is lower semi-continuous for every $\psi\in \cts{\Y}$ by applying the following special case of \cite[Theorem 3.3]{bouchitte1990new}.

%%%% general l.s.c. result
\begin{prop}[\normalfont{\cite[Theorem 3.3]{bouchitte1990new}}]\label{prop:f_infty_phi_f}
Let $f:\base\times \R \to [0,+\infty]$ be such that, for every $\shz\in\base$, $f(\shz,\cdot)$ is proper, convex, lower semi-continuous, and equal to $+\infty$ on $(-\infty,0)$. Suppose also that $\underset{t\to+\infty}{\lim}\frac{f(\cdot,t)}{t}=+\infty$
uniformly on $\base$.
Then the functional defined on $\PM(\base)$ by
\begin{align*}
\bm \mapsto \begin{cases}
\displaystyle{\int_{\base} }f(\shz,\bm(\shz))\, \rd \shz \quad & \text{if }\bm\in\Pac(\base)\\
+\infty \quad & \text{otherwise,}
\end{cases}
\end{align*}
is lower semi-continuous with respect to the weak-* topology on $\PM(\base)$.
\end{prop}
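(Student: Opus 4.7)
The plan is to realize the functional $F(\bar{\rho}) := \int_B f(x,\bar\rho(x))\,dx$ (with the convention $+\infty$ on $\PM(B)\setminus\Pac(B)$) as a pointwise supremum of weak-* continuous affine functionals on $\PM(B)$, from which lower semi-continuity is automatic.

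First I would set up the Legendre dual pointwise. Since $f(x,\cdot)$ is proper, convex, and lower semi-continuous for each $x \in B$, the Fenchel--Moreau theorem gives
\[
f(x,t) \;=\; \sup_{s\in\R}\bigl\{st - f^*(x,s)\bigr\}, \qquad f^*(x,s) \;:=\; \sup_{t\ge 0}\bigl\{st - f(x,t)\bigr\}.
\]
The superlinear growth hypothesis $f(x,t)/t \to +\infty$ uniformly in $x$ guarantees that $f^*(x,s)$ is finite for every $(x,s) \in B\times\R$ and locally bounded in $s$ uniformly in $x$. In particular $x \mapsto f^*(x,\phi(x))$ is integrable whenever $\phi \in C_b(B)$.

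Next, for $\bar\rho \in \Pac(B)$ I would establish the representation
\[
\int_B f(x,\bar\rho(x))\,dx \;=\; \sup_{\phi \in C_b(B)} \left\{ \int_B \phi\,d\bar\rho - \int_B f^*(x,\phi(x))\,dx \right\}.
\]
The inequality $\ge$ is immediate from the pointwise biconjugate identity. For the reverse inequality I would apply a measurable selection theorem to pick $\phi^*(x) \in \partial_t f(x,\bar\rho(x))$, so that $f(x,\bar\rho(x)) = \phi^*(x)\bar\rho(x) - f^*(x,\phi^*(x))$ a.e., then approximate $\phi^*$ by continuous bounded functions via a Scorza--Dragoni / Lusin-type argument, controlling the tail using the uniform superlinearity to bound $f^*$. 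Then I would extend this formula to arbitrary $\bar\rho \in \PM(B)$ by keeping the right-hand side unchanged. For $\bar\rho \in \Pac(B)$ the two sides agree; for $\bar\rho$ with a nontrivial singular part, taking $\phi$ to be a large constant $\lambda$ times a continuous approximation to the indicator of a Lebesgue-null set carrying that singular part, and sending $\lambda \to +\infty$, one obtains the supremum equal to $+\infty$, matching $F$.

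Finally, each functional $\bar\rho \mapsto \int_B \phi\,d\bar\rho - \int_B f^*(x,\phi(x))\,dx$, with $\phi \in C_b(B)$ fixed, is weak-* continuous on $\PM(B)$ by definition of weak-* convergence, the second term being a constant. Since $F$ has been expressed as a pointwise supremum of such weak-* continuous functionals, $F$ is weak-* lower semi-continuous, which is the claim.

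The main obstacle will be the duality representation, specifically the $\le$ inequality: one must approximate the optimal but only measurable Fenchel selection $\phi^*$ by continuous bounded functions while simultaneously keeping $\int f^*(x,\phi_n(x))\,dx$ bounded and making $\int \phi_n\,\bar\rho\,dx$ converge. The uniform superlinear growth is precisely what prevents $f^*$ from blowing up under truncation and enables this approximation; it also forces $F \equiv +\infty$ on singular measures, which is essential for closing the identification on all of $\PM(B)$.
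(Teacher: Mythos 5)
The paper offers no internal proof to compare against: the proposition is quoted verbatim as a special case of \cite[Theorem 3.3]{bouchitte1990new}, so the paper's ``route'' is a bare citation, whereas you give an actual argument. Your argument is essentially the classical duality proof of that cited result, and it is sound: by Fenchel--Moreau, write the functional as the supremum over $\phi\in\ctsbdd{\base}$ of the weak-* continuous affine maps $\bar\rho\mapsto\int_\base\phi\,\rd\bar\rho-\int_\base f^*(\shz,\phi(\shz))\,\rd\shz$, with Young's inequality giving the easy inequality, the uniform superlinearity giving the locally uniform (in $s$, uniform in $\shz$) upper bound on $f^*$ that both controls the Lusin/Scorza--Dragoni approximation and, via $\phi=\lambda\times(\text{continuous bump on a small-Lebesgue-measure neighbourhood of the singular set})$, forces the value $+\infty$ on measures with a singular part. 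What this buys over the paper is a self-contained, elementary proof; what the citation buys is that Bouchitt\'e--Buttazzo's theorem also covers the technical points you gloss over. Three of these deserve explicit care if you write your version out: (i) the statement as reproduced has no joint measurability hypothesis on $f$, but your proof needs $f$ to be a normal (Borel) integrand so that $\shz\mapsto f^*(\shz,\phi(\shz))$ is measurable and a measurable selection exists -- harmless for the paper's application, where $f=f_\psi$ is continuous on $\base\times[0,+\infty)$, but it should be said; (ii) your claim that $f^*(\cdot,\phi(\cdot))$ is integrable is too strong -- it is uniformly bounded above, while its negative part is only dominated (via Young) by $f(\shz,\bar\rho(\shz))+\lvert\phi(\shz)\rvert\,\bar\rho(\shz)$ once some $\bar\rho$ of finite energy exists, the degenerate case being the trivial one where the functional is identically $+\infty$; (iii) the subdifferential $\partial_t f(\shz,\cdot)$ at $\bar\rho(\shz)$ may be empty (boundary of the effective domain) or yield an unbounded, non-integrable selection, so it is cleaner to take bounded measurable near-optimal $\phi$ (Rockafellar's interchange theorem for decomposable spaces, or truncation plus monotone convergence) and only then apply the Lusin approximation exactly as you describe. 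With those adjustments your route is complete and arguably more informative than the paper's citation.
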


%%%% lower semi-continuity of H
\begin{lem}\label{lem:lsc}
For any $\tm\in\PM(\Y)$ and $\psi\in\cts{\Y}$ the functional $\sfct_{\tm,\psi}$ is lower semi-continuous with respect to the weak-* topology on $\PM(\base)$.
\end{lem}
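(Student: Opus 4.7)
The plan is to apply Proposition \ref{prop:f_infty_phi_f} to a non-negative modification of $f_\psi$ differing from it by an affine function of $\ul$. By Remark \ref{rem:well_def} and the definition of $\sfct_{\tm,\psi}$,
\[
\sfct_{\tm,\psi}(\bm) = \int_\base f_\psi(\shz,\bm(\shz))\,\rd\shz
\]
for $\bm\in\Pac(\base)$, where $f_\psi$ is given by \eqref{eqn:f_psi}. Since $c\geq 0$ on $\X\times\Y$, the trivial bound $\psi^c(\shz,\svt)\geq-\|\psi\|_{\cts{\Y}}$ holds throughout $\base\times[0,+\infty)$. Setting $M\coloneqq\|\psi\|_{\cts{\Y}}$, I would define
\[
g_\psi(\shz,\ul) \coloneqq \int_0^\ul\bigl(\psi^c(\shz,\svt)+M\bigr)\,\rd\svt \quad\text{if } \ul\geq 0, \qquad g_\psi(\shz,\ul)\coloneqq +\infty \quad \text{if } \ul<0.
\]
Then $g_\psi\geq 0$ with $g_\psi(\shz,0)=0$, and $g_\psi(\shz,\ul) = f_\psi(\shz,\ul)-C_\psi+M\ul$ on $[0,+\infty)$. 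Consequently, $g_\psi(\shz,\cdot)$ inherits from $f_\psi(\shz,\cdot)$ (via Lemma \ref{lem:f_psi_conv}) the properties of being proper, convex, and lower semi-continuous on $\R$, and equal to $+\infty$ on $(-\infty,0)$.

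The remaining hypothesis of Proposition \ref{prop:f_infty_phi_f} is the uniform superlinear growth $g_\psi(\shz,\ul)/\ul\to +\infty$ as $\ul\to+\infty$, uniformly in $\shz\in\base$. I would deduce this by first showing that $\psi^c(\shz,\svt)\to+\infty$ uniformly in $\shz$ as $\svt\to+\infty$. For any $K>0$, the uniform divergence \eqref{eqn:c_to_infty} obtained in the proof of Lemma \ref{lem:Q_unifbdd_equicts} yields $T>0$ such that $\min_{(\shz,\tvar)\in\base\times\Y}c((\shz,\svt),\tvar)\geq K+\|\psi\|_{\cts{\Y}}$ for all $\svt\geq T$. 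Combined with $\psi^c(\shz,\svt)\geq\min_{\tvar\in\Y}c((\shz,\svt),\tvar)-\|\psi\|_{\cts{\Y}}$, this gives $\psi^c(\shz,\svt)\geq K$ for every $\shz\in\base$ and every $\svt\geq T$. Hence
\[
\frac{g_\psi(\shz,\ul)}{\ul}\geq\frac{1}{\ul}\int_T^\ul(\psi^c(\shz,\svt)+M)\,\rd\svt\geq\frac{K(\ul-T)}{\ul}
\]
for $\ul\geq T$, and the right-hand side converges to $K$ as $\ul\to+\infty$. Since $K$ was arbitrary, $g_\psi(\cdot,\ul)/\ul\to+\infty$ uniformly on $\base$.

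With the hypotheses of Proposition \ref{prop:f_infty_phi_f} verified, the functional $\bm\mapsto\int_\base g_\psi(\shz,\bm(\shz))\,\rd\shz$, extended by $+\infty$ outside $\Pac(\base)$, is lower semi-continuous on $\PM(\base)$ with respect to the weak-$*$ topology. Since
\[
\int_\base g_\psi(\shz,\bm(\shz))\,\rd\shz = \sfct_{\tm,\psi}(\bm) - C_\psi\leb{d-1}(\base) + M
\]
for $\bm\in\Pac(\base)$, and both functionals equal $+\infty$ on $\PM(\base)\setminus\Pac(\base)$, the functional $\sfct_{\tm,\psi}$ differs from a weak-$*$ lower semi-continuous functional by an additive constant, hence is itself weak-$*$ lower semi-continuous. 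The most delicate step is the uniform superlinear growth; it is precisely where Assumption \ref{ass:cost}.\ref{ass:cost_inc} together with the uniform divergence argument underlying Lemma \ref{lem:Q_unifbdd_equicts} are needed.
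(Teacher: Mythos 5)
Your proposal is correct, and it follows the same overall architecture as the paper: both reduce the statement to Proposition \ref{prop:f_infty_phi_f} by replacing $f_\psi$ with a non-negative modification and then verifying the uniform superlinear growth. The differences are in the details and are worth noting. The paper exploits the invariance $\sfct_{\tm,\psi}=\sfct_{\tm,\psi+\lambda}$ to assume without loss of generality that $\psi<0$, so that $\psi^c>0$ and $\tilde f_\psi\coloneqq f_\psi-C_\psi$ is already non-negative; it then verifies the growth condition through the convexity inequality \eqref{eqn:str_cvex} together with a bound on $\bmalt$ over $\base\times\Y$. You instead keep $\psi$ fixed and add the linear term $M\ul$ with $M=\|\psi\|_{\cts{\Y}}$, which is harmless because it integrates to the constant $M$ against any probability density, and you obtain the superlinear growth directly by integrating the uniform lower bound $\psi^c(\shz,\svt)\geq K$ for $\svt\geq T$, which follows from \eqref{eqn:c_to_infty} (the passage from the sequential statement to ``for all $\svt\geq T$'' is legitimate, either because the sequence there is arbitrary or by monotonicity of $c$ in $\svt$). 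Your growth verification is more elementary than the paper's, at the cost of relying on an equation established inside the proof of Lemma \ref{lem:Q_unifbdd_equicts} rather than on a displayed lemma; both arguments ultimately rest on the same divergence property of the cost, and the remaining bookkeeping (convexity, properness, lower semi-continuity of $g_\psi(\shz,\cdot)$, and the constant offset $-C_\psi\leb{d-1}(\base)+M$ on $\Pac(\base)$ with both functionals equal to $+\infty$ elsewhere) is handled correctly.
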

\begin{proof}
By definition,
\begin{align*}
\sfct_{\tm,\psi}(\bm) = \begin{cases}
\displaystyle{\int_{\base} }f_\psi(\shz,\bm(\shz))\, \rd \shz \quad & \text{if }\bm\in\Pac(\base)\\
+\infty \quad & \text{otherwise,}
\end{cases}
\end{align*}
where $f_\psi$ is defined by \eqref{eqn:f_psi}. Define 
\[\tilde{f}_\psi := f_\psi - C_\psi.\]
Since $C_\psi$ is constant in $\bm$, it is sufficient to show that $\tilde{f}_\psi$ satisfies the hypotheses of Proposition \ref{prop:f_infty_phi_f}. Moreover, since $\sfct_{\tm,\psi}(\bm) = \sfct_{\tm,\psi+\lambda}(\bm)$ for all $\lambda\in\R$ and $\Y$ is compact, we assume without loss of generality that $\psi(\tvar) < 0$ for all $\tvar\in\Y$.

By Lemma \ref{lem:f_psi_conv}, for every $\shz\in\base$, $\tilde{f}_\psi(\shz,\cdot)$ is proper, convex, and lower semi-continuous, and it is equal to $+\infty$ on the interval $(-\infty,0)$ by definition.
Since $c$ non-negative and $\psi$ is negative, it holds that $C_\psi<0$ and
\begin{align}\label{eqn:psi_c_pos}
    \psi^c(\shz,\svt) = \min_{\tvar\in\Y}\left\{c((\shz,\svt),\tvar) - \psi(\tvar)\right\} > 0 \qquad \forall\, \svt\in [0,+\infty).
\end{align}
It follows immediately that $\tilde{f}_\psi$ is non-negative.

It remains to show that $\underset{t\to+\infty}{\lim}\frac{\tilde{f}_\psi(\cdot,t)}{t}=+\infty$ uniformly on $\base$. That is
\begin{align*}
\forall\, M>0,\; \exists\; T>0\; \text{ such that }\, t\geq T \implies \, \frac{\tilde{f}_\psi(x,t)}{t}\geq M\quad \forall \, \shz\in\base.
\end{align*}
Let $M>0$ and let $r\in\R$ satisfy
\[r> \max_{(\shz,\tvar)\in\base\times\Y} \bmalt(\shz,\tvar,M+\psi(\tvar))\geq 0. \]
This maximum is well-defined and non-negative because $\bmalt$ is  non-negative and continuous by Proposition \ref{lem:Q_unifbdd_equicts} and $\base\times\Y$ is compact. By construction and Assumption \ref{ass:cost}.\ref{ass:cost_inc},
\[c((\shz,r),\tvar)>M+\psi(\tvar) \quad \forall\, \shz\in\base,\, \tvar\in\Y,\] 
which implies that
\begin{equation}\label{eqn:M_bd}
-M + \underset{\shz\in\base}{\min}\,\psi^c(\shz,r) = -M + \underset{\shz\in\base}{\min}\min_{\tvar\in\Y}\{c((\shz,r),\tvar)-\psi(\tvar)\} >0.
\end{equation}
Define the constant
\begin{align*}
T\coloneqq \frac{r\,\underset{\shz\in \base}{\max}\,\psi^c(\shz,r)}{-M + \underset{\shz\in \base}{\min}\,\psi^c(\shz,r)},
\end{align*}
which is positive by \eqref{eqn:psi_c_pos} and \eqref{eqn:M_bd}.
By \eqref{eqn:str_cvex}, for any $\shza\in\base$ and $t\geq T$,
\[
\frac{\tilde{f}_\psi(\shza,t)}{t} \geq \frac{\tilde{f}_\psi(\shza,r)+(t-r)\pdone{\tilde{f}_\psi}{t}(\shza,r)}{t} = \frac{\tilde{f}_\psi(\shza,r) - r \psi^c(\shza,r)}{t} + \psi^c(\shza,r).
\]
By definition of $\tilde{f}_\psi$ and positivity of $\psi^c$,
\[
\tilde{f}_\psi(\shza,r) - r \psi^c(\shza,r) = \int_{0}^r \left(\psi^c(\shza,\tilde{r}) - \psi^c(\shza,r)\right)\,\rd \tilde{r} > - r\max_{\shz\in\base}\psi^c(\shz,r)
\]
It follows that
\[
\frac{\tilde{f}_\psi(\shza,t)}{t}\geq \frac{ - r\, \underset{\shz\in\base}{\max}\,\psi^c(\shz,r)}{T} + \min_{\shz\in\base}\,\psi^c(\shz,t)=M,
\]
as required.
Applying Proposition \ref{prop:f_infty_phi_f} with $f=\tilde{f}_\psi$ completes the proof.
\end{proof}

We now state and prove the main result of this section.

\begin{thm}\label{thm:dual}
For each $\tm\in \PM(\Y)$ there exists a maximiser of $\df_\tm$. In particular, $\psi$ maximises $\df_\tm$ if and only if $\psi$ is a Kantorovich potential from $\im_{\bm}$ to $\tm$ with $\bm=\bm_\psi$.
\end{thm}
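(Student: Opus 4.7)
The plan is to establish existence of a maximiser by a compactness argument on the restricted set $A(\Y)$ from Lemma \ref{lem:restriction}, and then obtain the characterisation of maximisers by applying the subdifferential calculus rule for marginal functions \cite[Theorem 2.4.18]{zalinescu2002convex} to the representation
\[
\df_\tm(\psi) = \inf_{\bm \in \Pac(\base)} \kf(\psi; \im_\bm, \tm),
\]
exploiting the uniqueness of the inner minimiser $\bm_\psi$ supplied by Theorem \ref{thm:subdual}.

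For existence, I would first reduce to $A(\Y)$ via Lemma \ref{lem:restriction}. Each element of $A(\Y)$ is the $\bar c$-transform of a function on the compact set $\base \times [0,P]$, so by local Lipschitzness of $c$ (Assumption \ref{ass:cost}.\ref{ass:cost_cts}) these $\bar c$-transforms share a common Lipschitz constant on $\Y$; combined with the normalisation $\min_{\tvar\in\Y}\psi(\tvar)=0$, this makes $A(\Y)$ uniformly bounded and equicontinuous, hence relatively compact in $\cts{\Y}$ by the Arzelà-Ascoli theorem. Each Kantorovich functional $\psi \mapsto \kf(\psi;\im_\bm,\tm)$ is concave and upper semi-continuous on $\cts{\Y}$ (a Kantorovich dual functional for a fixed optimal transport problem), and the pointwise infimum of such functions is again concave and upper semi-continuous. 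Therefore the supremum of $\df_\tm$ over the compact closure of $A(\Y)$ is attained, and by Lemma \ref{lem:restriction} this is a global maximiser of $\df_\tm$ on $\cts{\Y}$.

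For the characterisation I would argue by convex duality. Lemma \ref{lem:lsc} shows that $\sfct_{\tm,\psi}$ is proper, convex and weak-$*$ lower semi-continuous on $\PM(\base)$ for each $\psi \in \cts{\Y}$, and Theorem \ref{thm:subdual} gives the uniqueness of its minimiser $\bm_\psi$. Applying the marginal-function superdifferential formula \cite[Theorem 2.4.18]{zalinescu2002convex} to the jointly concave-in-$\psi$, convex-in-$\bm$ functional $\sfct_{\tm,\cdot}(\cdot) = \kf(\,\cdot\,; \im_\cdot, \tm)$ yields the identity
\[
\partial \df_\tm(\psi) = \partial_\psi \kf(\psi; \im_{\bm_\psi}, \tm),
\]
where $\partial$ denotes the superdifferential of a concave function. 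Since $\df_\tm$ is concave and upper semi-continuous, $\psi$ is a maximiser of $\df_\tm$ if and only if $0 \in \partial \df_\tm(\psi)$, which by the displayed identity is equivalent to $\psi$ being a maximiser of the Kantorovich dual functional $\kf(\cdot; \im_{\bm_\psi}, \tm)$, i.e., a Kantorovich potential from $\im_{\bm_\psi}$ to $\tm$. This gives both directions of the equivalence in one stroke.

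The step I expect to be the main obstacle is the invocation of \cite[Theorem 2.4.18]{zalinescu2002convex}: because the inner variable $\bm$ ranges over the infinite-dimensional, non-reflexive space $\PM(\base)$ equipped with the weak-$*$ topology, the technical hypotheses of the marginal-function subgradient rule must be checked carefully. Uniqueness of the inner minimiser (Theorem \ref{thm:subdual}) and lower semi-continuity of the integrand (Lemma \ref{lem:lsc}) are precisely what is needed to apply the theorem, but fitting these into the abstract framework of \cite{zalinescu2002convex} and then translating the resulting superdifferential identity into the concrete Kantorovich potential condition is the delicate part of the argument.
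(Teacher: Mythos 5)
Your proposal follows essentially the same route as the paper: existence via Lemma \ref{lem:restriction}, the uniform Lipschitz bound on $A(\Y)$ from the $\bar{c}$-transform structure plus Arzel\`a--Ascoli, and concavity/upper semi-continuity of $\df_\tm$ as an infimum of Kantorovich functionals; then the characterisation via the marginal-function superdifferential formula \cite[Theorem 2.4.18]{zalinescu2002convex} combined with Lemma \ref{lem:lsc} and the uniqueness of $\bm_\psi$ from Theorem \ref{thm:subdual}. The only point the paper spells out that you elide is that $\df_\tm$ is real-valued (proper) and bounded above on $A(\Y)$ --- needed both for the maximum to be meaningful and to pass from ``$0$ lies in the superdifferential'' to ``$\psi$ is a maximiser'' --- but this is a routine estimate using the uniform bound $P$ from Proposition \ref{prop:bm}.
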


\begin{proof}
We first prove existence of a maximiser of $\df_\tm$. By Lemma \ref{lem:restriction}, it is sufficient to prove that the closure of $A(\Y)$ is a compact subset of $\cts{\Y}$, and that $\df_\tm$ is proper, concave, upper semi-continuous, and bounded above on $A(\Y)$.

Let $\psi\in A(\Y)$. Then there exists $\varphi\in \cts{\base\times [0,P]}$ such that $\psi = \varphi^{\overline{c}}$. By Assumption \ref{ass:cost}.\ref{ass:cost_cts}, the cost $c$ is Lipschitz on the compact set $(\base\times[0,P])\times \Y$ with some Lipschitz constant $L>0$. As the pointwise minimum over a family of $L$-Lipschitz functions, $\psi$ is $L$-Lipschitz on $\Y$. Then, since $\underset{\tvar\in\Y}{\min}{\,\psi(\tvar)} = 0$,
\begin{equation}\label{eqn:psi_bound}
    \vert\psi(\tvar)\vert \leq L\mathrm{diam}(\Y)\qquad \forall\,\tvar\in\Y,
\end{equation}
which is finite since $\Y$ is compact. As such, $A(\Y)$ is uniformly bounded and equicontinuous, so by the Arzela-Ascoli Theorem, its closure is compact in $\cts{\Y}$ with respect to the uniform topology.

Next we show that $\df_\tm$ is concave and upper semi-continuous with respect to the uniform topology. Let $\bm\in\Pac(\base)$ and let $\psi_0,\,\psi_1\in \cts{\Y}$. Let $\svar\in \X$, and let $\tvar_i\in\Y$ be such that
\[
\psi_i^c(\svar) = c(\svar,\tvar_i) - \psi_i(\tvar_i)
\]
for each $i\in \{0,1\}$.
Then, using the definition of the $c$-transform,
\begin{align*}
\psi^c_1(\svar) - \psi_0^c(\svar) &= \min_{\tvar\in\Y}\{c(\svar,\tvar)-\psi_1(\tvar)\} - \left(c(\svar,\tvar_0) - \psi_0(\tvar_0)\right)\\
&\leq \left(c(\svar,\tvar_0) - \psi_1(\tvar_0)\right)- \left(c(\svar,\tvar_0) - \psi_0(\tvar_0)\right)\\
&= \psi_1(\tvar_0) - \psi_0(\tvar_0)\\
&\leq \|\psi_1 - \psi_0\|_{\cts{\Y}}.
\end{align*}
By symmetry, it follows that
\[
\left\vert\psi_1^c(\svar) - \psi_0^c(\svar)\right\vert
= \max\left\{\psi_1^c(\svar) - \psi_0^c(\svar),\, \psi_0^c(\svar) - \psi_1^c(\svar)\right\}
\leq \|\psi_1 - \psi_0\|_{\cts{\Y}}.
\]
Since $\svar\in\X$ was arbitrary,
\[
\left\vert \kf(\psi_1;\im_\bm,\tm) - \kf(\psi_0;\im_\bm,\tm) \right\vert 
= \left\vert \int_\X \left(\psi_1^c - \psi_0^c\right) \, \rd \im_\bm + \int_\Y \left(\psi_1 - \psi_0\right) \, \rd \tm \right\vert 
\leq 2\left\|\psi_1 - \psi_0\right\|_{\cts{\Y}}.
\]
So $\kf(\,\cdot\,;\im_\bm,\tm)$ is $2$-Lipschitz on $\cts{\Y}$. In particular, it is upper semi-continuous. Moreover, for $\lambda\in(0,1)$ and $\svar\in\X$,
\begin{align*}
((1-\lambda)\psi_1+\lambda\psi_0)^c(\svar) &= \min_{\tvar\in Y}\left\{c(\svar,\tvar) - (1-\lambda)\psi_1(\tvar) - \lambda\psi_0(\tvar)\right\}\\
& \geq (1-\lambda)\min_{\tvar\in Y}\left\{c(\svar,\tvar) - \psi_1(\tvar)\right\} - \lambda\min_{\tvar\in Y}\left\{c(\svar,\tvar) - \psi_0(\tvar)\right\}\\
& = (1-\lambda)\psi_1^c(\svar) + \lambda\psi_0^c(\svar).
\end{align*}
Therefore,
\begin{align*}
\kf((1-\lambda)\psi_1+\lambda\psi_0;\im_\bm,\tm) 
&= \int_\X ((1-\lambda)\psi_1+\lambda\psi_0)^c \, \rd \im_\bm + \int_\Y (1-\lambda)\psi_1+\lambda\psi_0 \, \rd \nu\\
& \geq (1-\lambda)\left(\int_\X \psi_1^c \, \rd \im_\bm + \int_\Y \psi_1 \, \rd \nu\right) + \lambda\left( \int_\X\psi_0^c \, \rd \im_\bm + \int_\Y \psi_0 \, \rd \nu\right)\\
& = (1-\lambda)\kf(\psi_1;\im_\bm,\tm)+\lambda\kf(\psi_0;\im_\bm,\tm),
\end{align*}
so $\kf(\,\cdot\,;\im_\bm,\tm)$ is concave. As the pointwise minimum of a family of concave $2$-Lipschitz functions, $\df_\tm$ is concave and $2$-Lipschitz. In particular, it is upper semi-continuous.

We now show that $\df_\tm$ is real valued (in particular proper) and bounded above on $A(\Y)$. Indeed, let $\psi\in A(\Y)$ and let $\bm = \bm_\psi$. 
By Theorem \ref{thm:subdual}, $\bm_\psi$ minimises $\sfct_{\tm,\psi}$, so by definition of $\df_{\tm}$,
\begin{equation}\label{eqn:G_eq_H}
    \df_\tm(\psi) = \kf(\psi;\im_\bm,\tm).
\end{equation}
By Proposition \ref{prop:bm}, $\|\bm\|_{\cts{\base}}\leq P$,
where $P>0$ is a constant independent of $\psi$. 
Since $c$ is Lipschitz on the compact set $(\base\times [0,P])\times \Y$, its maximum over that set, which we denote by  $c_{\max}$, is attained. 
Using the bound \eqref{eqn:psi_bound} and the fact that $\underset{\tvar\in \Y}{\min}\, \psi(\tvar)=0$,
\begin{align*}
\kf(\psi;\im_\bm,\tm)
& = \int_\X \psi^c \, \rd \im_\bm + \int_\Y \psi \, \rd \nu\\
& \leq \left(c_{\max} - \min_{\tvar\in\Y} \psi(\tvar)\right)\int_\base\bm(\shz)\,\rd\shz + L\mathrm{diam}(\Y)\\
& = c_{\max} + L\mathrm{diam}(\Y),
\end{align*}
which is an upper bound for $\df_\tm$ on $A(\Y)$ by \eqref{eqn:G_eq_H}. In particular, since $\bm$ is bounded and $\psi$ and $\psi^c$ are continuous, $\df_\tm(\psi)$ is real valued so $\df_\tm$ is proper. Since this holds for any $\psi\in \cts{\Y}$, $\df_\tm$ is real valued. This concludes the proof that a maximiser of $\df_\tm$ over $\cts{\Y}$ exists.

Finally we prove the necessary and sufficient optimality condition. Since $\df_\tm$ is concave and real valued, for every $\psi\in \cts{\Y}$ the superdifferential of $\df_\tm$ at $\psi$, denoted by $\partial^+\df_\tm(\psi)$, is non-empty and contains $0$ if and only if $\psi$ is a maximiser of $\df_\tm$. By Lemma \ref{lem:lsc}, for every $\psi\in \cts{\Y}$ the functional $\sfct_{\tm,\psi}$ is lower semi-continuous with respect to the weak-* topology on $\PM(\base)$. By definition
\[
\df_\tm(\psi) = \inf_{\bm\in\PM(\Y)}\, \sfct_{\tm,\psi}(\bm).
\]
Let $\psi\in \cts{\Y}$ and let $\bm = \bm_\psi$ be the unique minimiser of $\sfct_{\tm,\psi}$, which exists by Theorem \ref{thm:subdual}. Then by \cite[Theorem 2.4.18]{zalinescu2002convex},
\[
\partial^+\df_\tm(\psi) = \partial^+\left[\kf(\,\cdot\,;\im_\bm,\tm)\right](\psi).
\]
Hence, $\psi$ is a maximiser of $\df_\tm$ if and only if $0\in\partial^+\left[\kf(\,\cdot\,;\im_\bm,\tm)\right](\psi)$, which holds if and only if $\psi$ is a Kantorovich potential from $\im_\bm$ to $\nu$.
\end{proof}

\subsection{Duality and existence and uniqueness of optimal surfaces}\label{sect:duality_existence_uniqueness}
We now combine the results of this section to prove that the primal problem (Problem \ref{prob:primal}) has a unique solution, to prove that duality holds between the primal and dual problems, and to derive an optimality condition relating both problems.

\begin{thm}\label{thm:duality_existence_uniqueness}
For each $\tm\in \PM(\Y)$, there exists a unique minimiser of $\pf_\tm$. In particular,
\begin{align}\label{eqn:duality}
\min_{\bm\in\Pac(\base)}\pf_\tm(\bm)
=
\max_{\psi\in \cts{\Y}}\df_\tm(\psi),
\end{align}
and $\bm$ and $\psi$ are optimal in their respective problems if and only if $p=p_\psi$ and $\psi$ is a Kantorovich potential from $\im_{\bm}$ to $\tm$.
\end{thm}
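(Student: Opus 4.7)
The plan is to combine the three ingredients already assembled: weak duality (Lemma \ref{lem:weak_duality}), the unique primal minimiser $\bm_\psi$ of $\sfct_{\tm,\psi}$ for each fixed dual variable $\psi$ (Theorem \ref{thm:subdual}), and the existence and characterisation of dual maximisers (Theorem \ref{thm:dual}). The main work is a short chain of inequalities that are forced to become equalities.

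First, by Theorem \ref{thm:dual} pick a maximiser $\psi_*\in\cts{\Y}$ of $\df_\tm$, and let $\bm_*\coloneqq\bm_{\psi_*}\in\Pac(\base)$, which is the unique minimiser of $\sfct_{\tm,\psi_*}$ by Theorem \ref{thm:subdual}. Again by Theorem \ref{thm:dual}, $\psi_*$ is a Kantorovich potential from $\im_{\bm_*}$ to $\tm$, so that $\kf(\psi_*;\im_{\bm_*},\tm)=\T(\im_{\bm_*},\tm)=\pf_\tm(\bm_*)$ by the Kantorovich Duality Theorem. Hence
\[
\df_\tm(\psi_*)=\sfct_{\tm,\psi_*}(\bm_*)=\kf(\psi_*;\im_{\bm_*},\tm)=\pf_\tm(\bm_*).
\]
Combined with Lemma \ref{lem:weak_duality}, this forces $\pf_\tm(\bm_*)\le \inf_{\bm}\pf_\tm(\bm)\le \pf_\tm(\bm_*)$, so $\bm_*$ minimises $\pf_\tm$ and strong duality \eqref{eqn:duality} holds.

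For uniqueness and the optimality condition, suppose $\bm$ minimises $\pf_\tm$ and $\psi$ maximises $\df_\tm$. Then by weak duality and the definition of $\df_\tm$,
\[
\pf_\tm(\bm) = \df_\tm(\psi) = \inf_{\tilde\bm\in\Pac(\base)}\kf(\psi;\im_{\tilde\bm},\tm) \le \kf(\psi;\im_\bm,\tm)\le \T(\im_\bm,\tm) = \pf_\tm(\bm),
\]
so every inequality is an equality. The first equality together with Theorem \ref{thm:subdual} forces $\bm=\bm_\psi$ (uniqueness of the minimiser of $\sfct_{\tm,\psi}$), and the second forces $\psi$ to be a Kantorovich potential from $\im_\bm$ to $\tm$ by the Kantorovich Duality Theorem. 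Conversely, if $\bm=\bm_\psi$ and $\psi$ is a Kantorovich potential from $\im_\bm$ to $\tm$, then Theorem \ref{thm:dual} yields that $\psi$ maximises $\df_\tm$, and the chain above, now reversed, shows that $\bm$ minimises $\pf_\tm$. Uniqueness of the primal minimiser follows: given any primal minimiser $\bm$, fixing the dual maximiser $\psi_*$ constructed above forces $\bm=\bm_{\psi_*}=\bm_*$.

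I do not expect a genuine obstacle here: all heavy lifting — existence of the dual maximiser via Arzela–Ascoli on $\bar c$-concave functions, lower semi-continuity of $\sfct_{\tm,\psi}$, and the explicit formula for $\bm_\psi$ — lives in Sections \ref{sect:surf} and \ref{sect:max}. The only subtle point to double-check is that the Kantorovich Duality Theorem applies to the measures $\im_\bm$ and $\tm$ (they have compact support on $\base\times[0,P]$ and $\Y$ respectively, and $c$ is continuous there), so the identity $\kf(\psi;\im_\bm,\tm)=\T(\im_\bm,\tm)$ is equivalent to $\psi$ being a Kantorovich potential; this has been used implicitly throughout and needs only a one-line citation.
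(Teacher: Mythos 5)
Your proposal is correct and follows essentially the same route as the paper: pick a dual maximiser via Theorem \ref{thm:dual}, use Theorem \ref{thm:subdual} to produce $\bm_{\psi_*}$, apply the Kantorovich Duality Theorem plus weak duality (Lemma \ref{lem:weak_duality}) to squeeze out existence and strong duality \eqref{eqn:duality}, and then characterise optimal pairs through $\bm=\bm_\psi$ and the Kantorovich-potential condition. The only differences are cosmetic: your uniqueness step forces equalities along a single chain of inequalities where the paper argues one inclusion of $\argmin_{\Pac(\base)}\pf_\tm=\argmin_{\Pac(\base)}\sfct_{\tm,\psi}$ by contradiction, and the first equality $\pf_\tm(\bm)=\df_\tm(\psi)$ in that chain should be attributed to the strong duality you have just established rather than to weak duality.
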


\begin{proof}
First we prove duality and existence of a minimiser of $\pf_\tm$ \eqref{eqn:duality}. Let $\psi\in \cts{\Y}$ be a maximiser of $\df_\tm$, which exists by Theorem \ref{thm:dual}. By Theorem \ref{thm:subdual}, $\bm=\bm_\psi$ is the unique minimiser of $\sfct_{\tm,\psi}$, and the optimality condition stated in Theorem \ref{thm:dual} implies that $\psi$ is a Kantorovich potential from $\im_\bm$ to $\tm$. Hence, by the Kantorovich Duality Theorem,
\begin{align*}
\df_\tm(\psi) = \inf_{\Pac(\base)} H_{\tm,\psi} = \kf(\psi;\im_\bm,\tm) = \max_{\cts{\Y}}\kf(\,\cdot\,;\im_\bm,\tm) = \T(\im_\bm,\tm) = \pf_\tm(\bm).
\end{align*}
Then, using weak duality (Lemma \ref{lem:weak_duality}),
\begin{align}\label{eqn:duality_pf}
\max_{\cts{\Y}} \,\df_\tm = \df_\tm(\psi) = \pf_\tm(\bm) \geq \inf_{\Pac(\base)} \pf_\tm \geq \sup_{\cts{\Y}} \,\df_\tm,
\end{align}
So all inequalities in \eqref{eqn:duality_pf} must be equalities, meaning that $\bm$ is a minimiser of the primal functional $\pf$ over $\Pac(\base)$. This establishes the existence of a minimiser of $\pf_\tm$ over $\Pac(\base)$, and proves that \eqref{eqn:duality} holds.

%%%% uniqueness of minimisers of primal
By Theorem \ref{thm:subdual}, $\underset{\Pac(\base)}{\argmin}\sfct_{\tm,\psi} = \{\bm_\psi\}$ is a singleton for every $\psi\in \cts{\Y}$. To prove that the minimiser of $\pf_\tm$ is unique, it is therefore sufficient to prove that
\begin{align}\label{eqn:set_eq}
{\argmin_{\Pac(\base)}} \, \pf_\tm
= {\argmin_{\Pac(\base)}}\, \sfct_{\tm,\psi} \qquad \forall\, \psi\in \underset{\cts{\Y}}{\argmax}\, \df_\tm.
\end{align}
Let $\psi$ be a maximiser of $\df_\tm$ and let $\bm=\bm_\psi$ be the unique minimiser of $\sfct_{\tm,\psi}$. By Theorem \ref{thm:dual}, $\psi$ is a Kantorovich potential from $\im_\bm$ to $\tm$. Then by \eqref{eqn:duality} and the Kantorovich Duality Theorem,
\begin{align*}
{\min_{\Pac(\base)}}\, \pf_\tm 
= {\max_{\cts{\Y}}}\, \df_\tm
= \df_\tm(\psi)
= \min_{\Pac(\base)}\, \sfct_{\tm,\psi}
=\kf(\psi;\im_\bm,\tm)
= \T(\im_\bm,\tm)
= \pf_\tm(\bm).
\end{align*}
Hence
\begin{equation}\label{eqn:subset}
\underset{\Pac(\base)}{\argmin}\, \sfct_{\tm,\psi} \subseteq \underset{\Pac(\base)}{\argmin}\, \pf_\tm.
\end{equation}
For a contradiction, suppose that there exists 
$\bm\in \underset{\Pac(\base)}{\argmin}\, \pf_\tm \setminus \underset{\Pac(\base)}{\argmin}\, \sfct_{\tm,\psi}$. By the Kantorovich Duality Theorem,
\begin{align}\label{eqn:contr_1}
\min_{\Pac(\base)}\, \pf_\tm 
= \pf_\tm(\bm)
= \T(\im_\bm,\tm)
\geq \kf(\psi;\im_\bm,\tm)
= \sfct_{\tm,\psi}(\bm).
\end{align}
By assumption,
\begin{align}\label{eqn:contr_2}
\sfct_{\tm,\psi}(\bm)
> \underset{\Pac(\base)}{\min}\sfct_{\tm,\psi}
=\df_{\tm}(\psi)
= \underset{\cts{\Y}}{\max}\, \df_\tm.
\end{align}
Together, \eqref{eqn:contr_1} and \eqref{eqn:contr_2} imply that
\[
\underset{\Pac(\base)}{\min}\, \pf_\tm 
>\underset{\cts{\Y}}{\max}\, \df_\tm,
\]
which contradicts \eqref{eqn:duality}. Hence, it must hold that
\begin{equation}\label{eqn:superset}
\underset{\Pac(\base)}{\argmin}\, \sfct_{\tm,\psi} \supseteq \underset{\Pac(\base)}{\argmin}\, \pf_\tm.
\end{equation}
Together, \eqref{eqn:subset} and \eqref{eqn:superset} establish \eqref{eqn:set_eq}, as required.

%%%%% optimality conditions
We conclude the proof by establishing the necessary and sufficient optimality conditions. By \eqref{eqn:set_eq}, $\bm$ and $\psi$ are optimal in their respective problems if and only if
\[
\bm = \bm_\psi \qquad \text{and} \qquad \psi\in \underset{\cts{\Y}}{\argmax}\, \df_\tm.
\]
By Theorem \ref{thm:dual}, this holds if and only if $\psi$ is a Kantorovich potential from $\im_\bm$ to $\tm$, as required.
\end{proof}

\section{Stability}\label{sect:stability}

In this section, we establish stability of Problem \ref{prob:primal} with respect to the target measure $\tm$. We work in the setting described in Section \ref{sect:prob} and cost functions $c:\X\times\Y\to [0,+\infty)$ satisfying only Assumption \ref{ass:cost}, unless otherwise specified. Using the stability of the optimal transport cost $\T$ with respect to source and target measures \cite[Theorem 5.20]{villani2008optimal}, we show that for $\tm_\n$ converging to $\tm$ in $\PM(\Y)$, the sequence of minimisers $\bm_\n$ of the functionals $\pf_{\tm_\n}$ converges uniformly to the minimiser $\bm$ of $\pf_\tm$ (Theorem \ref{thm:min_conv}). If the cost $c$ additionally satisfies the twist condition (Assumption \ref{ass:twist}) then there exists a corresponding sequence of optimal transport maps $T_n$ between $\im_{\bm_\n}$ and $\tm_\n$, and we show that this sequence converges to the optimal transport map $T$ between $\im_{\bm}$ and $\tm$ (Theorem \ref{thm:transport_maps}).

%%%% Gamma convergence
\begin{thm}\label{thm:min_conv}
The map $\pf:\left(\Pac(\base)\cap L^{\infty}(\base),\|\cdot\|_{L^\infty(\base)}\right)\times \PM(\Y)\to\R$ given by
\[
\pf(\bm,\tm) \coloneqq \pf_\tm(\bm) = \T(\im_\bm,\tm)
\]
is continuous, where $\PM(\Y)$ is equipped with the weak topology. In particular, suppose that $\tm_\n\rightharpoonup\tm$ in $\PM(\Y)$, and let $\bm_\n$ (respectively $\bm$) be the unique minimiser of $\pf_{\tm_\n}$ (respectively $\pf_\tm$) over $\Pac(\base)$ for each $\n\in\N$. Then $\pf_{\tm_n} \overset{\Gamma}{\to} \pf_\tm$ on $\left(\Pac(\base)\cap L^{\infty}(\base),\|\cdot\|_{L^\infty(\base)}\right)$, $\bm_n \to\bm$ in $\cts{\base}$, and $\im_\bm \to \im_{\bm_n}$ strongly in $L^1(\X)$.
Moreover, if $\gamma_n$ (respectively $\gamma$) is an optimal transport plan from $\im_{\bm_n}$ to $\tm_n$ (respectively from $\im_{\bm}$ to $\tm$) for each $n\in\N$, then up to a subsequence $\gamma_n\rightharpoonup \gamma$ in $\PM(\X\times\Y)$.
\end{thm}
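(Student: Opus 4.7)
The plan is to first establish joint continuity of $F$ on $\left(\Pac(\base)\cap L^\infty(\base),\|\cdot\|_{L^\infty}\right) \times \PM(\Y)$ via Villani's stability theorem, then obtain the $\Gamma$-convergence as an immediate consequence, and finally combine this with the equi-coercivity provided by Proposition \ref{prop:bm} to deduce uniform convergence of the minimisers. The $L^1$ and plan convergence statements will then follow as short corollaries.

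For continuity of $F$ at $(\bm,\tm)$, I would let $\bm_n \to \bm$ in $L^\infty(\base)$ and $\tm_n \rightharpoonup \tm$ in $\PM(\Y)$. The uniform $L^\infty$ bound forces the supports of all $\im_{\bm_n}$ into a common compact set $\base\times[0,M]$, and Remark \ref{rem:well_def} gives
\[
\|\im_{\bm_n} - \im_\bm\|_{L^1(\X)} = \|\bm_n - \bm\|_{L^1(\base)} \to 0,
\]
so $\im_{\bm_n} \rightharpoonup \im_\bm$ narrowly. Since $c$ is continuous and bounded on the compact set $(\base\times[0,M])\times\Y$ by Assumption \ref{ass:cost}.\ref{ass:cost_cts}, Villani's stability theorem \cite[Theorem 5.20]{villani2008optimal} yields $F(\bm_n,\tm_n) \to F(\bm,\tm)$. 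The $\Gamma$-convergence claim is then immediate: the $\liminf$ inequality is joint continuity along convergent sequences, and the constant recovery sequence $\bm_n \equiv \bm$ handles the $\limsup$ via continuity of $F$ in its second argument.

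For convergence of the minimisers in $\cts{\base}$, I would use Theorem \ref{thm:duality_existence_uniqueness} to write each $\bm_n = \bm_{\psi_n}$ for a Kantorovich potential $\psi_n$ of $\T(\im_{\bm_n},\tm_n)$. Proposition \ref{prop:bm} then places $(\bm_n)$ in a uniformly bounded and equicontinuous family, which is precompact in $\cts{\base}$ by Arzel\`a--Ascoli. Along any convergent subsequence $\bm_{n_k} \to \bm^*$, I can pass to the limit in the minimality inequality $F(\bm_{n_k}, \tm_{n_k}) \leq F(\tilde\bm, \tm_{n_k})$ for arbitrary competitor $\tilde\bm \in \Pac(\base) \cap L^\infty(\base)$, using the continuity just established, to show that $\bm^*$ minimises $\pf_\tm$; uniqueness in Theorem \ref{thm:duality_existence_uniqueness} then forces $\bm^* = \bm$ and convergence of the full sequence. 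The $L^1$ convergence $\im_{\bm_n} \to \im_\bm$ follows once more from Remark \ref{rem:well_def}, and the subsequential weak convergence of the optimal plans $\gamma_n$ to an optimal plan $\gamma$ is a final application of \cite[Theorem 5.20]{villani2008optimal}. The main technical point is verifying Villani's hypotheses, in particular uniform compactness of the supports of the source measures; this is supplied by the $L^\infty$ control on the sequence, either by hypothesis for the continuity statement or by Proposition \ref{prop:bm} for the minimiser statement.
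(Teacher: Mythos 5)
Your proposal is correct and follows essentially the same route as the paper: joint continuity of $F$ via the $L^1$/narrow convergence of $\im_{\bm_n}$ from Remark \ref{rem:well_def}, common compact supports from the $L^\infty$ bound, and \cite[Theorem 5.20]{villani2008optimal}, followed by equicontinuity/uniform boundedness of the minimisers from Theorem \ref{thm:duality_existence_uniqueness} and Proposition \ref{prop:bm}, Arzel\`a--Ascoli, and identification of subsequential limits with the unique minimiser. Your passage to the limit in the minimality inequality against bounded competitors (which suffices since the true minimiser lies in $\mathcal{S}$ and is hence bounded) is just the $\Gamma$-convergence argument of the paper written out explicitly, so there is no substantive difference.
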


\begin{proof}
We first show that $F$ is continuous. Let $\left((\bm_\n,\tm_\n)\right)_{n\in\N}$ be a sequence converging in \newline
$\left(\Pac(\base)\cap L^{\infty}(\base),\|\cdot\|_{L^\infty(\base)}\right)\times \PM(\Y)$ to a point $(\bm,\tm)$. Since $\base$ is compact, $\|\bm_\n-\bm\|_{L^1(\base)}\to 0$, and by Remark \ref{rem:well_def}, $\|\im_{\bm_n}-\im_{\bm}\|_{L^1(\X)}\to 0 $. This implies that
\begin{equation}\label{eqn:im_weak_conv}
    \im_{\bm_n}\rightharpoonup \im_\bm.
\end{equation}
Since the sequence $(\bm_\n)_{\n\in\N}$ converges in $L^\infty(\base)$, it is uniformly bounded, so $\im_\bm$ and $\im_{\bm_\n}$ have common compact support, say $A\subset\X$. By continuity of $c$ (Assumption \ref{ass:cost}.\ref{ass:cost_cts}) and compactness of $A\times\Y$,
\begin{equation}\label{eqn:finite_cost}
\T(\im_{\bm},\tm)<+\infty\quad\text{and}\quad\T(\im_{\bm_\n},\tm_\n)<+\infty\quad \forall\,\n\in\N.
\end{equation}
For each $\n\in\N$ let $\gamma_\n$ be an optimal transport plan from $\im_{\bm_\n}$ to $\tm_\n$. Since $\im_{\bm_n}\rightharpoonup \im_\bm$ and $\tm_\n\rightharpoonup \tm$ and the corresponding transport costs are finite \eqref{eqn:finite_cost}, by \cite[Theorem 5.20]{villani2008optimal}, there exists an optimal transport plan $\gamma$ from $\im_\bm$ to $\tm$ such that, up to a subsequence, $\gamma_n\rightharpoonup\gamma$. For all $\n\in\N$, the support of $\gamma_n$ and $\gamma$ is necessarily contained in $A\times\Y$, on which $c$ is a bounded continuous function, so, up to a subsequence,
\[
\pf(\bm_\n,\tm_\n) = \T(\im_{\bm_\n},\nu_\n) = \int_{A\times \Y} c \, \rd\gamma_\n  \to \int_{A\times \Y} c \, \rd\gamma =  \T(\im_{\bm},\nu) =\pf(\bm,\tm).
\]
Since the limit is independent of the chosen subsequence, the whole sequence converges to this limit. This establishes the claimed continuity of $F$, which immediately implies the claimed $\Gamma$-convergence.

Finally, suppose that $\tm_\n\to\tm$ in $\PM(\Y)$, let $\bm_\n$ be the unique minimiser of $\pf_{\tm_\n}$ over $\Pac(\base)$ for each $\n\in\N$, and let $\bm$ be the unique minimiser of $\pf_\tm$ over $\Pac(\base)$. By Theorem \ref{thm:duality_existence_uniqueness}, $(\bm_\n)_{\n\in\N}\subset \mathcal{S}$, which is uniformly bounded and equicontinuous by Proposition \ref{prop:bm}. By the Ascoli-Arzela Theorem, any subsequence has a further subsequence that converges uniformly, and therefore with respect to the strong $L^\infty$ topology. Since $\pf_{\tm_\n}\overset{\Gamma}{\to}\pf_\tm$ on $(\Pac(\base)\cap L^\infty(\base),\|\cdot\|_{L^\infty(\base)})$, the limit of this further subsequence must be equal to $\bm$. Since the limit is independent of the choice of subsequence, the whole sequence converges to this limit. By Remark \ref{rem:well_def}, $\im_{\bm_n}\to\im_\bm$ strongly in $L^1(\X)$. Letting $\gamma_n$ (respectively $\gamma$) by an optimal transport plan from $\im_{\bm_n}$ to $\tm_n$ (respectively from $\im_\bm$ to $\tm$) for each $n\in\N$, applying \cite[Theorem 5.20]{villani2008optimal} as above gives that, up to a subsequence, $\gamma_n\rightharpoonup\gamma$ in $\PM(\X\times\Y)$.
\end{proof}

Theorem \ref{thm:min_conv} establishes stability of minimisers of Problem \ref{prob:primal} with respect to $\tm$. We now show that when the cost $c$ is twisted (Assumption \ref{ass:twist}), if $\bm$ is the minimiser of $\pf_\tm$ then there is a unique optimal transport map $T$ from $\im_\bm$ to $\tm$, and $T$ is also stable with respect to $\tm$.

\begin{thm}[Existence and stability of transport maps]\label{thm:transport_maps}
Suppose that the cost $c:\X\times\Y\to\R$ satisfies Assumption \ref{ass:twist}. Then for any $\bm\in\Pac(\base)\cap L^{\infty}(\base)$ and $\tm\in\PM(\Y)$ there exists a unique optimal transport map $T$ from $\im_\bm$ to $\tm$, which is defined $\im_\bm$ almost everywhere. Moreover, suppose that $\tm_\n\rightharpoonup\tm$ in $\PM(\Y)$, let $(\bm_\n)_{\n\in\N}$ be the corresponding sequence of minimisers of Problem \ref{prob:primal}, and for each $n\in\N$ let $T_\n$ (respectively $T$) be the unique optimal transport map from $\im_{\bm_\n}$ to $\tm_\n$ (respectively $\im_\bm$ to $\tm$) that is zero outside the support of  $\im_{\bm_\n}$ (respectively $\im_\bm$). Then
\[
\|T_\n - T \|_{L^r(\X)} \to 0 \qquad \forall \, r\in [1,+\infty).
\]
\end{thm}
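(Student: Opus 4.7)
The plan for the first statement is to invoke the Gangbo--McCann theorem. Since $\bm\in L^{\infty}(\base)$, the measure $\im_\bm = \leb{d}\mres \X_\bm$ is absolutely continuous with respect to the $d$-dimensional Lebesgue measure, and the twist condition (Assumption \ref{ass:twist}) then provides existence and uniqueness of an optimal transport map $T$ from $\im_\bm$ to $\tm$, defined $\im_\bm$-almost everywhere.

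For the stability statement, I would first invoke Theorem \ref{thm:min_conv} to obtain that $\bm_\n \to \bm$ uniformly on $\base$, $\im_{\bm_\n}\to \im_\bm$ strongly in $L^1(\X)$, and that the optimal transport plans $\gamma_\n \coloneqq (\mathrm{id}_\X, T_\n)_\# \im_{\bm_\n}$ converge weakly, along a subsequence, to an optimal plan $\gamma$ from $\im_\bm$ to $\tm$. By the uniqueness of $T$ from the first part, $\gamma = (\mathrm{id}_\X, T)_\# \im_\bm$ is the unique optimal plan, so uniqueness of the limit upgrades the subsequential convergence to convergence of the full sequence $\gamma_\n \rightharpoonup \gamma$. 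Proposition \ref{prop:bm} also provides a uniform bound $P>0$ on $\|\bm_\n\|_{L^\infty(\base)}$ and $\|\bm\|_{L^\infty(\base)}$, so the measures $\im_{\bm_\n}, \im_\bm$ all sit inside the common compact set $\X_0 \coloneqq \base \times [0,P]$, while $T_\n, T$ take values in the compact set $\Y$.

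The main step is to promote the weak convergence of plans to $L^r$-convergence of the maps. The plan is to combine Lusin's theorem with Tietze's extension theorem: given $\eta>0$, choose a continuous map $\tilde T\colon \X_0 \to \R^d$ with uniformly bounded range such that $\leb{d}(\{x\in\X_\bm : T(x)\neq\tilde T(x)\})<\eta$. Then $(x,y)\mapsto|y-\tilde T(x)|^r$ is bounded continuous on $\X_0\times\Y$, so testing $\gamma_\n \rightharpoonup \gamma$ against it yields
\[
\int_\X |T_\n(x)-\tilde T(x)|^r\,\rd\im_{\bm_\n}(x) \longrightarrow \int_\X |T(x)-\tilde T(x)|^r\,\rd\im_\bm(x) \leq C\eta,
\]
where $C$ depends only on the diameter of $\Y$ and on the range of $\tilde T$. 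A triangle inequality in $L^r$ followed by a diagonal argument sending $\eta\to 0$ would then give $\int_{\X_\bm\cap\X_{\bm_\n}}|T_\n-T|^r\,\rd x\to 0$. To handle the extension by zero outside supports, I would decompose
\[
\int_\X |T_\n-T|^r\,\rd x = \int_{\X_\bm\cap\X_{\bm_\n}}|T_\n-T|^r\,\rd x + \int_{\X_{\bm_\n}\setminus\X_\bm}|T_\n|^r\,\rd x + \int_{\X_\bm\setminus\X_{\bm_\n}}|T|^r\,\rd x,
\]
and bound each of the last two integrals by $M^r\,\leb{d}(\X_{\bm_\n}\mathbin{\triangle}\X_\bm) = M^r\|\bm_\n-\bm\|_{L^1(\base)}$ with $M\coloneqq\sup_{y\in\Y}|y|$, which vanishes by Remark \ref{rem:well_def} combined with Theorem \ref{thm:min_conv}.

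The hardest part will be the Lusin--Tietze approximation: weak convergence of plans does not directly yield pointwise or metric convergence of the associated maps, so care is required both to keep $\tilde T$ uniformly bounded (so that $(x,y)\mapsto|y-\tilde T(x)|^r$ remains a bounded continuous test function on $\X_0\times\Y$) and to arrange the diagonal argument in $\eta$ so that it is compatible with the weak convergence $\gamma_\n\rightharpoonup \gamma$. The remaining steps reduce to Theorem \ref{thm:min_conv}, the $L^1$-isometry $\bm\mapsto\im_\bm$ from Remark \ref{rem:well_def}, and elementary estimates.
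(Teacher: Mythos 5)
Your proposal is correct, and its skeleton matches the paper's: Gangbo--McCann for existence/uniqueness, Theorem \ref{thm:min_conv} for $\gamma_\n\rightharpoonup\gamma$ upgraded to the full sequence by uniqueness of the plan induced by $T$, Lusin's theorem to tame the measurable map $T$, and the same final decomposition over $\X_{\bm}\cap\X_{\bm_\n}$ and the symmetric difference, with the latter controlled by $M^r\|\bm_\n-\bm\|_{L^1(\base)}$ exactly as in the paper (which uses $R^r\|\bm-\bm_\n\|_{L^1(\base)}$ with $\Y\subset B_R(0)$). Where you genuinely diverge is the middle step. The paper stays at the level of measures: after Lusin it considers the closed set $A_\eps=\{(\svar,\tvar)\in A\times\Y:\|T(\svar)-\tvar\|\geq\eps/3\}$, notes $\gamma(A_\eps)=0$ because $\gamma$ is concentrated on $\mathrm{graph}(T)$, applies the portmanteau upper bound for closed sets to get $\gamma_\n(A_\eps)$ small, and then chains measure estimates (using $\spt(\gamma_\n)\subseteq\mathrm{graph}(T_\n)$ and $\|\im_\bm-\im_{\bm_\n}\|_{L^1}$) to obtain convergence of $T_\n$ to $T$ in $\im_\bm$-measure, which uniform boundedness by $\mathrm{diam}(\Y)$ upgrades to $L^r$. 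You instead build a continuous surrogate $\tilde T$ via Lusin--Tietze and test the weak convergence $\gamma_\n\rightharpoonup\gamma$ against $(\svar,\tvar)\mapsto\|\tvar-\tilde T(\svar)\|^r$, then conclude by the triangle inequality in $L^r$ and $\eta\to 0$. Your route is slightly slicker (no bookkeeping with the sets $\Delta_{\n,\eps}$ and no explicit convergence-in-measure step), but it leans on two points you should make explicit: (i) the identification $\gamma=(\mathrm{id}_\X,T)_\#\im_\bm$ of the limit plan (available from Gangbo--McCann uniqueness, and equivalent to the paper's use of concentration on $\mathrm{graph}(T)$), and (ii) the test function must be bounded and continuous on all of $\X\times\Y$, so extend $\tilde T$ by Tietze to the whole of $\X$ (not just $\base\times[0,P]$) with the same bound, or restrict the weak convergence to the common compact support $\base\times[0,P]\times\Y$, which is legitimate since $\bm_\n,\bm\in\mathcal{S}$ are uniformly bounded by $P$ (this uses Theorem \ref{thm:duality_existence_uniqueness} to place the minimisers in $\mathcal{S}$, as the paper does). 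With those details pinned down, your argument is a valid alternative proof of the same statement.
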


\begin{proof}
For any $\bm\in\Pac(\base)\cap L^{\infty}(\base)$ and $\tm\in\PM(\Y)$, both $\im_\bm$ and $\tm$ have compact support, and $\im_\bm$ is absolutely continuous with respect to $\leb{d}$. Since $c$ is twisted (Assumption \ref{ass:twist}), by the Gangbo-McCann Theorem (see for example \cite{gangbo1996geometry} or \cite[Theorem 12]{merigot2021optimal}), there exists a unique optimal transport map $T$ from $\im_\bm$ to $\tm$, which is defined $\im_\bm$ almost everywhere.
Moreover, the measure $\gamma\coloneqq (\mathrm{id}_{\X},T)_\#(\im_{\bm})$ is the unique optimal transport plan from $\im_{\bm}$ to $\tm$.

Suppose that $\tm_\n\rightharpoonup\tm$ in $\PM(\Y)$, let $(\bm_\n)_{\n\in\N}$ be the corresponding sequence of minimisers of Problem \ref{prob:primal}, and for each $n\in\N$ let $T_\n$ (respectively $T$) be the unique optimal transport map from $\im_{\bm_\n}$ to $\tm_\n$ (respectively $\im_\bm$ to $\tm$) extended by zero onto $\X$. Define $\gamma_\n\coloneqq (\mathrm{id}_{\X},T_\n)_\#(\im_{\bm_\n})$ and $\gamma\coloneqq (\mathrm{id}_{\X},T)_\#(\im_{\bm})$. Let $r\in [1,+\infty)$ and $\eps\in (0,1)$. We aim to find $N\in\N$ such that for $\n\geq N$, $\|T-T_n\|^r_{L^r(\X)}\leq \eps$. 

By Theorem \ref{thm:duality_existence_uniqueness}, $\bm_n,\,\bm\in \mathcal{S}$ for all $n\in\N$, and by Proposition \ref{prop:bm}, $\mathcal{S}$ is equicontinuous and uniformly bounded. Let $P>0$ be a uniform bound for $\mathcal{S}$. Define the compact set 
\[\X_P\coloneqq \{(\shz,\svt)\in X \, : \, \svt\leq P\},\]
and for each $\n\in \N$ define
\[
\Delta_{\n,\eps} \coloneqq \left\{\svar\in\X_P\,:\, \|T(\svar) - T_n(\svar)\|\geq \frac{\eps}{3}\right\}.
\]
We first show that with
\[
\delta = \frac{\eps}{3\mathrm{diam}(\Y)^r}
\]
there exists $N_0\in \N$ such that for $\n\geq N_0$, $\mu_\bm(\Delta_{\n,\eps})\leq\delta.$

By Theorem \ref{thm:min_conv},
$\|\im_\bm - \im_{\bm_\n}\|_{L^1(\X)}\to 0$ and $\gamma_\n\rightharpoonup\gamma$, where the convergence is along the whole sequence by uniqueness of $\gamma$. Hence, there exists $N\in\N$ such that for all $\n\geq N$
\begin{equation}\label{eqn:delta_1_4}
\|\im_\bm - \im_{\bm_\n}\|_{L^1(\X)}\leq \frac{\delta}{4}.
\end{equation}
By Lusin's Thoerem, there exists a compact set $A\subset \X_P$ such that $T\vert_{A}$ is continuous and
\begin{equation}\label{eqn:delta_2_4}
\im_\bm(\X\setminus A)\leq \frac{\delta}{4}.
\end{equation}
In particular, the set
\[
A_\eps \coloneqq \left\{(\svar,\tvar)\in A\times Y\, : \, \|T(\svar)-\tvar\|\geq\frac{\eps}{3}\right\}
\]
is closed. Since $\gamma$ is concentrated on the graph of $T$, $\gamma(A_\eps) = 0$, so weak convergence of $\gamma_\n$ to $\gamma$  and closedness of $A_\eps$ implies that there exists $N_1\in \N$ such that for $\n\geq N_1$
\begin{equation}\label{eqn:delta_3_4}
\gamma_\n(A_\eps) \leq \frac{\delta}{4}.
\end{equation}
We have,
\begin{align*}
\im_{\bm_\n}(\Delta_{\n,\eps}\cap A)
&= \im_{\bm_\n}\left(\left\{\svar\in A\,:\, \|T(\svar) - T_n(\svar)\|\geq \frac{\eps}{3}\right\}\right)\\
&=\gamma_\n\left(\left\{\svar\in A\,:\, \|T(\svar) - T_n(\svar)\|\geq \frac{\eps}{3}\right\}\times \Y\right)\\
&=\gamma_\n\left(\left\{(\svar,\tvar)\in A\times\Y\,:\, \|T(\svar) - \tvar\|\geq \frac{\eps}{3}\right\}\right)\\
&=\gamma_\n(A_\eps),
\end{align*}
where the second equality holds because $\gamma_\n$ has first marginal $\im_{\bm_\n}$, and the third equality holds because $\spt(\gamma_n)\subseteq\mathrm{graph}(T_\n)$.
For all $\n\geq N_2 \coloneqq \max\{N_0,N_1\}$,
\begin{align*}
\gamma_\n(A_\eps) = \im_{\bm_\n}(\Delta_{\n,\eps}\cap A) 
& = \im_{\bm_\n}(\Delta_{\n,\eps} \setminus (\X_P \setminus A)) \\
& \geq \im_{\bm_\n}(\Delta_{\n,\eps}) - \im_{\bm_\n}(\X_P \setminus A) \\
& = \im_{\bm_\n}(\Delta_{\n,\eps}) + \left(\im_{\bm}(\X_P \setminus A) - \im_{\bm_\n}(\X_P \setminus A)\right) - \im_{\bm}(\X_P \setminus A) \\
& \geq \im_{\bm_\n}(\Delta_{\n,\eps}) - \|\im_{\bm} - \im_{\bm_\n}\|_{L^1(\X)} - \im_{\bm}(\X_P\setminus A).
\end{align*}
Then by \eqref{eqn:delta_1_4}, \eqref{eqn:delta_2_4}, and \eqref{eqn:delta_3_4},
\begin{equation}\label{eqn:mu_n_Delta_bound}
    \im_{\bm_\n}(\Delta_{\n,\eps}) \leq \frac{3\delta}{4}.
\end{equation}
Then by \eqref{eqn:delta_1_4} and \eqref{eqn:mu_n_Delta_bound}, for all $n\geq N_2$,
\[
\im_{\bm}(\Delta_{\n,\eps}) = \im_{\bm}(\Delta_{\n,\eps}) -  \im_{\bm_\n}(\Delta_{\n,\eps}) +  \im_{\bm_\n}(\Delta_{\n,\eps}) \leq \|\im_{\bm} - \im_{\bm_\n}\|_{L^1(\X)} + \im_{\bm_\n}(\Delta_{\n,\eps}) \leq \delta,
\]
as claimed.

Since the optimal transport maps $T_\n$ and $T$ map into $\Y$
\[
\|T-T_\n\|_{L^\infty(\X)} \leq \mathrm{diam}(\Y).
\]
By definition of $\Delta_{\n,\eps}$, and since $\im_\bm$ is a probability measure on $\X$, it follows that for all $n\geq N_2$,
\begin{align*}
\|T - T_n \|^r_{L^r(\X,\im_{\bm})} &= \int_\X \| T- T_\n \|_{\R^d}^r \,\ \rd \im_\bm\\
& = \int_{\X\setminus \Delta_{\n,\eps}} \| T- T_\n \|_{\R^d}^r \,\rd \im_\bm + \int_{\Delta_{\n,\eps}} \| T- T_\n \|_{\R^d}^r \, \rd \im_\bm\\
& \leq \left(\frac{\eps}{3}\right)^r + \im_\bm(\Delta_{\n,\eps}) \mathrm{diam}(\Y)^r\\
& < \frac{\eps}{3} + \delta \mathrm{diam}(\Y)^r\\
& = \frac{2\eps}{3}.
\end{align*}
Finally, since $\Y$ is compact, there exists $R>0$ such that $\Y\subset B_R(0)\subset \R^d$. Uniform convergence of $\bm_{\n}$ to $\bm$ implies that there exists $N_3\in\N$ such that for all $\n\geq N_3$
\[
\| \bm - \bm _\n\|_{L^1(\base)} \leq \frac{\eps}{3R^r}.
\]
Let $N = \max\{N_2,N_3\}$. Since $T$ is zero outside of $\spt(\im_\bm)$, and $T_\n$ is zero outside of $\spt(\im_{\bm_\n})$, for $n\geq N$
\begin{align*}
\|T - T_\n \|^r_{L^r(\X)} &= \int_{\X} \|T(\svar) - T_\n(\svar) \|_{\R^d}^r \, \rd \svar\\
& = \|T - T_n \|^r_{L^r(\X,\im_\bm)} + \int_{\X} \|T_\n(\svar) \|_{\R^d}^r \chfun_{\{\bm(\shz)\leq\svt\leq \bm_\n(\shz)\}}(\svar)\, \rd \svar\\
& \leq \frac{2\eps}{3} + R^r \| \bm - \bm _\n\|_{L^1(\base)}\\
&= \eps,
\end{align*}
which completes the proof.
\end{proof}

\section{Reduction to an optimal transport problem}\label{sect:reduct}

We now use the optimality conditions of Theorem \ref{thm:duality_existence_uniqueness} to show that under Assumption \ref{ass:cost} Problem \ref{prob:primal} reduces to a standard optimal transport problem.
In addition, we show that if the cost function $c$ is twisted (Assumption \ref{ass:twist}) and its gradient with respect to $\svar$ is bounded away from zero (Assumption \ref{ass:ext_twist}), then the corresponding Monge problem admits a unique solution (Corollary \ref{cor:reduction_map}). In the case where $\tm$ is discrete, we derive the form of this solution in terms of \emph{Laguerre cells} (Corollary \ref{cor:reduction_semi_discrete}). The relationship between Problem \ref{prob:primal} and the optimal transport problem to which it can be reduced is illustrated in Figure \ref{fig:reduction_schematic}.

\begin{thm}\label{thm:reduction}
Let $\ext{\tvar}\in\R^d\setminus\Y$ and define $\ext{c}:\X\times\Y\cup\{\ext{y}\}\to [0,+\infty)$ by
\[
\ext{c}(\svar,\tvar) = 
\begin{cases}
c(\svar,\tvar) \quad &\text{if}\quad \tvar\in \Y\\
0\quad & \text{if}\quad \tvar=\ext{\tvar}.
\end{cases}
\]
Let $P>0$ be a uniform upper bound on the set $\mathcal{S}$, and define
\[
\ext{\im}\coloneqq \leb{d-1}\mres(\base\times [0,P]),\qquad \ext{\tm}\coloneqq \tm + (P-1)\delta_{\ext{\tvar}}.
\]
For $\tm\in\PM(\Y)$,
\begin{align}\label{eqn:reduction}
\begin{rcases}
\bm \in\argmin \pf_\tm\\
\psi\in\argmax \df_\tm\\
\gamma \in \underset{\tilde{\gamma}\in\Gamma(\im_\bm,\tm)}{\argmin}\displaystyle{\underset{\X\times\Y}{\int}}c\, \rd\tilde{\gamma}
\end{rcases}
\iff
\begin{cases}
\bm = \underset{\tvar\in\Y}{\max}\, \bmalt(\cdot,\tvar,\ext{\psi}(\tvar)-\ext{\psi}(\ext{\tvar}))\\
\ext{\psi} \text{ is a Kantorovich potential from } \ext{\im} \text{ to }\ext{\tm}\\
\ext{\gamma} \in \underset{\tilde{\gamma}\in\Gamma(\ext{\im},\ext{\tm})}{\argmin}\displaystyle{\underset{\X\times\Y}{\int}}\ext{c}\, \rd\tilde{\gamma}
\end{cases}
\end{align}
where
\begin{equation}\label{eqn:gamma_ext}
\ext{\gamma}\coloneqq \gamma + \left(\leb{d}\mres \{(\shz,\svt)\, : \, \shz\in\base,\, \bm(\shz) \leq \svt\leq P \}\right)\otimes \delta_{\ext{\tvar}},
\end{equation}
and
\begin{equation}\label{eqn:psi_relation}
\ext{\psi}(\tvar) = 
\begin{cases}
\psi(\tvar) \quad \text{if} \quad \tvar\in \Y,\\
-\cst_\psi \quad \text{if} \quad \tvar = \ext{\tvar}.
\end{cases}
\end{equation}
\end{thm}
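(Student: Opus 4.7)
The strategy is to prove the biconditional directly using Kantorovich duality for the extended OT problem together with the optimality conditions already established in Theorem~\ref{thm:duality_existence_uniqueness} and Theorem~\ref{thm:subdual}. The central computation is the identification of the $\ext{c}$-transform of $\ext{\psi}$: from the definition of the $c$-transform and $\ext{c}(\svar,\ext{\tvar})=0$, one obtains
\[
\ext{\psi}^{\ext{c}}(\svar) = \min\left\{\psi^c(\svar),\, -\ext{\psi}(\ext{\tvar})\right\}.
\]
Combined with the fact that $\psi^c(\shz,\cdot)$ is strictly increasing (being a pointwise minimum of strictly increasing functions, by Assumption~\ref{ass:cost}.\ref{ass:cost_inc}) and the first-order conditions \eqref{eqn:opt_cond_p_lem} at $\bm=\bm_\psi$ with constant $\cst_\psi$, one gets the crucial pointwise description
\[
\ext{\psi}^{\ext{c}}(\shz,\svt)=
\begin{cases}
\psi^c(\shz,\svt) & \text{if } 0\leq \svt\leq \bm_\psi(\shz),\\
\cst_\psi & \text{if } \bm_\psi(\shz)\leq \svt\leq P,
\end{cases}
\]
up to a Lebesgue-null set. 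This is the single most delicate step and I expect it to carry most of the argument.

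For the forward direction, suppose $(\bm,\psi,\gamma)$ is optimal for the primal/dual/Kantorovich problem on $(\im_\bm,\tm)$. Theorem~\ref{thm:duality_existence_uniqueness} gives $\bm=\bm_\psi$, so the identity $\ext{\psi}(\tvar)-\ext{\psi}(\ext{\tvar})=\psi(\tvar)+\cst_\psi$ reproduces the displayed formula for $\bm$. A direct marginal check shows that $\ext{\gamma}$ defined by \eqref{eqn:gamma_ext} has marginals $\ext{\im}$ and $\ext{\tm}$, and since the added component is transported to $\ext{\tvar}$ at zero cost,
\[
\int_{\X\times(\Y\cup\{\ext{\tvar}\})} \ext{c}\,\rd\ext{\gamma} = \int_{\X\times\Y} c\,\rd\gamma = \T(\im_\bm,\tm).
\]
Using the description of $\ext{\psi}^{\ext{c}}$ above, the two $\cst_\psi(P-1)$ contributions coming from $\int_{\X}\ext{\psi}^{\ext{c}}\,\rd\ext{\im}$ and $(P-1)\ext{\psi}(\ext{\tvar})\in\int\ext{\psi}\,\rd\ext{\tm}$ cancel, yielding
\[
\int_{\X}\ext{\psi}^{\ext{c}}\,\rd\ext{\im} + \int_{\Y\cup\{\ext{\tvar}\}}\ext{\psi}\,\rd\ext{\tm}
= \int_{\X}\psi^c\,\rd\im_\bm + \int_\Y \psi\,\rd\tm
= \T(\im_\bm,\tm).
\]
Weak duality for $\tildeT(\ext{\im},\ext{\tm})$ then forces equality throughout, so $\ext{\psi}$ is a Kantorovich potential and $\ext{\gamma}$ is optimal.

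For the converse, define $\psi\coloneqq \ext{\psi}|_\Y$ and $\cst\coloneqq -\ext{\psi}(\ext{\tvar})$, and let $\gamma\coloneqq \ext{\gamma}|_{\X\times\Y}$. The optimality relation \eqref{eqn:psi_gamma_opt} applied to $\ext{\gamma}$, split according to whether $\tvar\in\Y$ or $\tvar=\ext{\tvar}$, forces $\ext{\psi}^{\ext{c}}(\svar)=\cst$ on the support of the $\ext{\tvar}$-part and $\ext{\psi}^{\ext{c}}(\svar)=\psi^c(\svar)$ on the support of the $\Y$-part; with the explicit form of $\ext{\psi}^{\ext{c}}$ derived formally from the $\bm$ defined in the statement, this places the first marginal of $\gamma$ on $\X_\bm$ and the remaining mass (transported to $\ext{\tvar}$) on $\X\setminus\X_\bm$ intersected with $\base\times [0,P]$. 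The mass of the first marginal of $\gamma$ equals $\ext{\tm}(\Y)=1$, so $\leb{d}(\X_\bm)=\int_\base \bm=1$; this verifies that $\bm$ is a probability density and, by the uniqueness asserted in Proposition~\ref{prop:bm}, identifies $\cst=\cst_\psi$ and $\bm=\bm_\psi$.

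The marginal constraints then give $\gamma\in\Gamma(\im_\bm,\tm)$. The same cancellation of the $\cst(P-1)$ terms as in the forward direction yields
\[
\T(\im_\bm,\tm) \leq \int c\,\rd\gamma = \int \ext{c}\,\rd\ext{\gamma} = \tildeT(\ext{\im},\ext{\tm}) = \kf(\psi;\im_\bm,\tm) \leq \T(\im_\bm,\tm),
\]
so $\gamma$ is optimal and $\psi$ is a Kantorovich potential from $\im_\bm$ to $\tm$. Applying Theorem~\ref{thm:duality_existence_uniqueness} with $\bm=\bm_\psi$ and this $\psi$ concludes that $\bm$ minimises $\pf_\tm$ and $\psi$ maximises $\df_\tm$, which completes the equivalence.
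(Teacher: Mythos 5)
Your proposal is correct, but it follows a genuinely different route from the paper. The paper never computes transport costs or dual values: using Theorem \ref{thm:duality_existence_uniqueness} together with the standard characterisation of optimal plans via Kantorovich potentials, it reduces both sides of \eqref{eqn:reduction} to the complementary-slackness conditions $\psi^c\oplus\psi=c$ $\gamma$-a.e.\ and $\ext{\psi}^{\ext{c}}\oplus\ext{\psi}=\ext{c}$ $\ext{\gamma}$-a.e., and then proves these two support conditions are equivalent by pointwise inequalities on $\spt(\ext{\gamma})$, split into $\spt(\gamma)$ and the part sent to $\ext{\tvar}$ (showing $\ext{\psi}^{\ext{c}}=\psi^c$ on the former, and that the condition at $\ext{\tvar}$ holds automatically on the latter). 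You instead establish the global identity $\ext{\psi}^{\ext{c}}=\min\{\psi^c,\cst_\psi\}$, identify it piecewise as $\psi^c$ on $\X_{\bm_\psi}$ and $\cst_\psi$ above the graph using \eqref{eqn:opt_cond_p_lem} and the strict monotonicity of $\psi^c(\shz,\cdot)$ (correct up to the null set $\base\times\{0\}$), and then argue by value matching: in the forward direction the dual value of $\ext{\psi}$ equals the cost of the feasible plan $\ext{\gamma}$, so weak duality forces both to be optimal; in the backward direction a duality chain plus mass counting identifies $-\ext{\psi}(\ext{\tvar})$ with $\cst_\psi$ via the uniqueness in Proposition \ref{prop:bm} (a step the paper sidesteps by taking \eqref{eqn:psi_relation}, with the constant $\cst_\psi$, as part of the given correspondence) and then returns to Theorem \ref{thm:duality_existence_uniqueness}. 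What your approach buys is an explicit, reusable formula for $\ext{\psi}^{\ext{c}}$ and transparent energy bookkeeping (the cancellation of the $\cst_\psi(P-1)$ terms); what the paper's buys is that it stays entirely at the level of supports, so no integrals, masses or duality statements for the extended measures are needed. Two small points you should make explicit: your cancellation and marginal check use the normalisation $\leb{d-1}(\base)=1$, equivalently that $\ext{\im}$ and $\ext{\tm}$ have equal total mass, which the theorem statement itself already requires for \eqref{eqn:gamma_ext} to have marginals $(\ext{\im},\ext{\tm})$; and your appeal to Kantorovich duality for the extended problem concerns measures of total mass $P$, so either rescale by $1/P$ or note that the duality theorem applies verbatim to finite measures of equal mass.
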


\begin{proof}
Let $\tm\in\PM(\Y)$. The optimality conditions for Problem \ref{prob:primal} given in Theorem \ref{thm:duality_existence_uniqueness} state that
\begin{align*}
\begin{rcases}
\bm\in\argmin \pf_\tm\\
\psi\in\argmax \df_\tm
\end{rcases}
&\iff
\begin{cases}
\bm = \bm_\psi\\
\psi \text{ is a Kantorovich potential from } \im_\bm \text{ to }\tm .
\end{cases}
\end{align*}
Combining this with the classical optimality conditions for optimal transport relating optimal transport plans to Kantorovich potentials (see for example \cite[Proposition 8]{merigot2021optimal}), and definition of $\bm_\psi$, gives
\begin{align}\label{eqn:equiv0}
\begin{rcases}
\bm\in\argmin \pf_\tm\\
\psi\in\argmax \df_\tm\\
\gamma \in \underset{\tilde{\gamma}\in\Gamma(\im_\bm,\tm)}{\argmin}\displaystyle{\underset{\X\times\Y}{\int}}c\, \rd\tilde{\gamma}
\end{rcases}
&\iff
\begin{cases}
\bm = \underset{\tvar\in\Y}{\max} \, \bmalt(\cdot,\tvar,\psi(\tvar) + \cst_\psi)\\
\psi^c \oplus \psi = c \quad \gamma\text{-a.e}
\end{cases}
\end{align}
where $\cst_\psi$ is the unique constant such that $\bm_\psi$ is a probability measure on $\base$. Conversely, again by \cite[Proposition 8]{merigot2021optimal},
\begin{align*}
\begin{rcases}
\bm = \underset{\tvar\in\Y}{\max}\, \bmalt(\cdot,\tvar,\ext{\psi}(\tvar)-\ext{\psi}(\ext{\tvar}))\\
\ext{\psi} \text{ is a Kantorovich potential from } \ext{\im} \text{ to }\ext{\tm}\\
\ext{\gamma} \in \underset{\tilde{\gamma}\in\Gamma(\ext{\im},\ext{\tm})}{\argmin}\displaystyle{\underset{\X\times\Y}{\int}}\ext{c}\, \rd\tilde{\gamma}
\end{rcases}
\iff
\begin{cases}
\bm = \underset{\tvar\in\Y}{\max} \, \bmalt(\cdot,\tvar,\ext{\psi}(\tvar) -\ext{\psi}(\ext{\tvar}))\\
\ext{\psi}^{\ext{c}} \oplus \ext{\psi} = \ext{c} \quad \ext{\gamma}\text{-a.e}.
\end{cases}
\end{align*}
Therefore, to establish \eqref{eqn:reduction}, it is sufficient to show that
\begin{align}\label{eqn:ext_support_cond}
\psi^c \oplus \psi = c \quad \gamma\text{-a.e}
\iff
\ext{\psi}^{\ext{c}} \oplus \ext{\psi} = \ext{c} \quad \ext{\gamma}\text{-a.e}
\end{align}
where $\psi\in\cts{\Y}$, $\cst_\psi\in\R$, and $\ext{\psi}\in \cts{\Y\times \{\ext{\tvar}\}}$ are related via \eqref{eqn:psi_relation},
and $\ext{\gamma}$ is defined by \eqref{eqn:gamma_ext}.
In particular, since $\base\times\{0\}$ is $\leb{d}$-negligible, the set $N \coloneqq (\base\times\{0\})\times\Y$ has zero $\gamma$ measure and zero $\ext{\gamma}$ measure, and it is sufficient to show that
\begin{equation}\label{eqn:suff_opt_cond}
\psi^c \oplus \psi = c \quad \text{on}\; \spt(\gamma)\setminus N
\iff
\ext{\psi}^{\ext{c}} \oplus \ext{\psi} = \ext{c} \quad \text{on}\; \spt(\ext{\gamma})\setminus N.
\end{equation}

First, we show that
\begin{equation}\label{eqn:ext_opt}
\ext{\psi}^{\ext{c}}(\svar) + \ext{\psi}(\tvar) = \ext{c}(\svar,\tvar) \quad \forall \, (\svar,\tvar)\in\spt(\ext{\gamma})\setminus (N\cup\spt(\gamma)),
\end{equation}
regardless of any hypotheses on $\psi$ and $\gamma$.
Let , $(\svar,\tvar)=((\shz,\svt),\tvar)\in\spt(\ext{\gamma})\setminus (N\cup\spt(\gamma))$. By construction 
$\svt\geq \bm(\shz)>0$, $\tvar=\ext{\tvar}$, and $\cst_\psi = -\ext{\psi}(\tvar) = \ext{c}(\svar,\tvar) - \ext{\psi}(\tvar)$.
Let $\tvara\in\Y$. On the one hand, if $c((\shz,0),\tvara)\leq \psi(\tvara) + \cst_\psi$, since $c((\shz,\cdot),\tvara)$ is increasing,
\[
c((\shz,\svt),\tvara)
\geq c((\shz,\bm(\shz)),\tvara)
\geq \psi(\tvara) + \cst_\psi,
\]
where the final inequality holds by definition of $\bmalt$.
On the other hand, if $c((\shz,0),\tvara)\geq \psi(\tvara) + \cst_\psi$, then
\[
c((\shz,\svt),\tvara)
\geq c((\shz,0),\tvara)
\geq \psi(\tvara) + \cst_\psi.
\]
In either scenario,
\[
\ext{c}(\svar,\tvara)-\psi(\tvara) \geq \cst_\psi = \ext{c}(\svar,\tvar) - \ext{\psi}(\tvar),
\]
so \eqref{eqn:ext_opt} holds.

To complete the proof, let $(\svar,\tvar)=((\shz,\svt),\tvar)\in \spt(\gamma)\setminus N$.
Then $\tvar\in \Y$ and $0< \svt \leq \bm(\shz)$. In particular, there exists $\tvara\in\Y$ such that
\[
c((\shz,\bm(\shz),\tvara) - \psi(\tvara) \leq \cst_\psi,
\]
otherwise $\bm(\shz)$ would be zero by definition.
Since $c((\shz,\cdot),\tvara)$ is increasing (Assumption \ref{ass:cost}.\ref{ass:cost_inc}),
\[
\psi^c(\svar)
= \psi^c((\shz,\svt)) 
\leq c((\shz,\svt),\tvara) - \psi(\tvara)
\leq c((\shz,\bm(\shz)),\tvara) - \psi(\tvara)\leq \cst_\psi.
\]
By construction
\[
\cst_\psi
= \ext{c}((\shz,\bm(\shz)),\ext{\tvar}) - \ext{\psi}(\ext{\tvar}).
\]
It follows that $\psi^c(\svar) = \ext{\psi}^{\ext{c}}(\svar)$.
Therefore,
\begin{equation}\label{eqn:eqiv_on_Y}
\psi^c(\svar) + \psi(\tvar)
= c(\svar,\tvar)
\iff
\ext{\psi}^{\ext{c}}(\svar) + \ext{\psi}(\tvar)
= \ext{c}(\svar,\tvar).
\end{equation}
Since $\spt(\gamma)\subseteq \spt(\ext{\gamma})$, this establishes the backwards implication in \eqref{eqn:suff_opt_cond}. Together, \eqref{eqn:ext_opt} and \eqref{eqn:eqiv_on_Y} establish the forwards implication in \eqref{eqn:suff_opt_cond}, which completes the proof.
\end{proof}

If Assumption \ref{ass:ext_twist} holds, then the cost function $\ext{c}$ is twisted. The following corollary is therefore a direct consequence of Theorem \ref{thm:reduction} and the Gangbo-McCann Theorem (see for example \cite{gangbo1996geometry} or \cite[Theorem 12]{merigot2021optimal}).

\begin{cor}\label{cor:reduction_map}
Let $\ext{c}$, $\ext{\im}$, $\ext{\tm}$, $\ext{\psi}$, $\ext{\gamma}$, and $P$ be defined as in Theorem \ref{thm:reduction}. If Assumption \ref{ass:ext_twist} holds, then there exists a unique optimal transport map $\ext{T}$ from $\ext{\im}$ to $\ext{\tm}$ for the cost $\ext{c}$, and it satisfies $\ext{\gamma} = (\mathrm{id}_{\base\times [0,P]},\ext{T})\#\ext{\im}$, and
\[
\nabla_{\svar} \ext{c}(\svar,\ext{T}(\svar)) = \nabla \ext{\psi}^c(\svar) \qquad \forall \, \svar\in \base\times [0,P].
\]
In particular, if $\bm\in \argmin \pf_\tm$ is the unique solution of Problem \ref{prob:primal} then $T\coloneqq \ext{T}\vert_{\X_\bm}$ is the optimal transport map from $\im_\bm$ to $\tm$ for the cost $c$, where we recall that $\X_{\bm} = \{(\shz,\svt)\in\X\, \vert \, \svt\leq \bm(\shz)\}$.
\end{cor}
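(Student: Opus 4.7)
The plan is to verify that the extended cost $\ext{c}$ satisfies a twist condition sufficient to invoke the Gangbo--McCann Theorem on the extended problem, and then translate the resulting optimal transport map back to the original free surface problem using Theorem \ref{thm:reduction}.

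First I would verify that $\ext{c}:\X\times(\Y\cup\{\ext{\tvar}\})\to[0,+\infty)$ is twisted in the sense required for Gangbo--McCann. Let $\Omega_\X$, $\Omega_\Y$ be the open sets given by Assumption \ref{ass:twist}; by shrinking $\Omega_\Y$ if necessary I may assume $\ext{\tvar}\notin\overline{\Omega_\Y}$, and pick an open ball $U$ around $\ext{\tvar}$ disjoint from $\Omega_\Y$. Extend $\ext{c}$ onto $\Omega_\X\times(\Omega_\Y\cup U)$ by setting $\ext{c}=c$ on $\Omega_\X\times\Omega_\Y$ and $\ext{c}\equiv 0$ on $\Omega_\X\times U$. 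This extension lies in $\mathcal{C}^1(\Omega_\X\times(\Omega_\Y\cup U))$. For each $\svar_0\in\Omega_\X$, the map $\nabla_\svar \ext{c}(\svar_0,\cdot)$ is injective on $\Omega_\Y$ by Assumption \ref{ass:twist} and equals $0$ on $U$. The only possible failure of injectivity on $\Y\cup\{\ext{\tvar}\}$ is therefore between a point $\tvar\in\Y$ and $\ext{\tvar}$; but Assumption \ref{ass:ext_twist} gives $\|\nabla_\svar c(\svar_0,\tvar)\|\geq \ell>0$, so $\nabla_\svar \ext{c}(\svar_0,\tvar)\neq 0 = \nabla_\svar \ext{c}(\svar_0,\ext{\tvar})$. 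Hence $\nabla_\svar \ext{c}(\svar_0,\cdot)$ is injective on the support $\Y\cup\{\ext{\tvar}\}$ of $\ext{\tm}$.

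Next I would apply the Gangbo--McCann Theorem to the triple $(\ext{c},\ext{\im},\ext{\tm})$. The source $\ext{\im}$ is the restriction of Lebesgue measure to the compact set $\base\times[0,P]$, hence absolutely continuous with compact support; $\ext{\tm}$ has compact support $\Y\cup\{\ext{\tvar}\}$; and $\ext{c}$ is continuous (in fact locally Lipschitz) and satisfies the twist condition verified above. Gangbo--McCann then yields a unique optimal transport map $\ext{T}:\base\times[0,P]\to\Y\cup\{\ext{\tvar}\}$, defined $\ext{\im}$-a.e., such that the unique optimal plan is $(\mathrm{id},\ext{T})_\#\ext{\im}$, and moreover the identity
\[
\nabla_\svar \ext{c}(\svar,\ext{T}(\svar)) = \nabla \ext{\psi}^{\ext{c}}(\svar)
\]
holds $\ext{\im}$-a.e.\ for any Kantorovich potential $\ext{\psi}$, by differentiating the optimality relation $\ext{\psi}^{\ext{c}}(\svar)+\ext{\psi}(\tvar)=\ext{c}(\svar,\tvar)$ at $\tvar=\ext{T}(\svar)$ in the $\svar$ variable.

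To conclude, I would invoke Theorem \ref{thm:reduction}. Let $\bm\in\argmin \pf_\tm$, which exists and is unique by Theorem \ref{thm:duality_existence_uniqueness}, and let $\gamma$ be the optimal plan from $\im_\bm$ to $\tm$. Theorem \ref{thm:reduction} implies that the extended plan $\ext{\gamma}$ defined in \eqref{eqn:gamma_ext} is the (unique, by Gangbo--McCann) optimal plan from $\ext{\im}$ to $\ext{\tm}$, so $\ext{\gamma}=(\mathrm{id},\ext{T})_\#\ext{\im}$. Comparing with the explicit formula \eqref{eqn:gamma_ext}, $\ext{T}$ must send $\ext{\im}$-a.e.\ point in $\{(\shz,\svt):\bm(\shz)<\svt\leq P\}$ to $\ext{\tvar}$, while its restriction to $\X_\bm$ pushes $\im_\bm$ forward to $\tm$ and realises $\gamma=(\mathrm{id}_{\X_\bm},\ext{T}|_{\X_\bm})_\#\im_\bm$. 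Since $\gamma$ is the unique optimal plan from $\im_\bm$ to $\tm$ (again by Gangbo--McCann applied directly to $(c,\im_\bm,\tm)$ under Assumption \ref{ass:twist}), $T\coloneqq \ext{T}|_{\X_\bm}$ is the unique optimal transport map from $\im_\bm$ to $\tm$.

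The main obstacle is the first step: the cost $\ext{c}$ has gradient identically zero in a neighbourhood of $\ext{\tvar}$, so the twist condition can only hold in the weaker ``injectivity on the support of $\ext{\tm}$'' form. Assumption \ref{ass:ext_twist} is what rules out collisions between $\nabla_\svar c(\svar_0,\tvar)$ for $\tvar\in\Y$ and the zero gradient at $\ext{\tvar}$, and this is why the stronger extended twist condition is needed here (rather than plain Assumption \ref{ass:twist}). Once this is in hand the rest is a bookkeeping exercise combining Gangbo--McCann with the correspondence already established in Theorem \ref{thm:reduction}.
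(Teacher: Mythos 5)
Your proposal is correct and follows essentially the same route as the paper, which presents this corollary as a direct consequence of Theorem \ref{thm:reduction} together with the Gangbo--McCann Theorem, the only substantive point being exactly the one you isolate: Assumption \ref{ass:ext_twist} guarantees $\nabla_{\svar}c(\svar_0,\tvar)\neq 0=\nabla_{\svar}\ext{c}(\svar_0,\ext{\tvar})$, so the zero-extended cost $\ext{c}$ remains twisted. Your elaboration (uniqueness of the extended plan forcing $\ext{\gamma}=(\mathrm{id},\ext{T})_{\#}\ext{\im}$, then restriction to $\X_{\bm}$) fills in precisely the bookkeeping the paper leaves implicit, including the honest observation that the gradient identity holds $\ext{\im}$-a.e.
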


With a view to solving Problem \ref{prob:primal} numerically, we now write down the formulation of Corollary \ref{cor:reduction_map} in the case that $\tm$ is a finitely supported measure. This reduces Problem \ref{prob:primal} to a standard semi-discrete optimal transport problem. (See \cite[Section 4]{merigot2021optimal} for an overview of semi-discrete optimal transport). By \cite[Proposition 37]{merigot2021optimal} the corresponding Monge problem has a solution $T$ expressed in \eqref{eqn:OT_map} in terms of so-called \emph{Laguerre cells} $L_i$ (see \eqref{eqn:Laguerre} and more generally \cite[Definition 17]{merigot2021optimal}). The intersection of any two Laguerre cells is $\leb{d}$-negligible \cite[Proposition 37]{merigot2021optimal}, so $T$ is well defined and corresponds to a labelled partition of the set $\X_\bm$.

\begin{cor}\label{cor:reduction_semi_discrete}
Suppose Assumption \ref{ass:ext_twist} holds.
Let $n\in \N$, and for $i\in \{1,\ldots,n\}$ let $\tvar_i\in \Y$ be distinct and $m_i>0$ be such that $\sum_{i=1}^n m_i = 1$. Let
\[
\tm = \sum_{i=1}^n m_i \delta_{\tvar_i},
\]
and let $\bm\in \argmin \pf_\tm$ be the unique solution of Problem \ref{prob:primal}. 
Let $\ext{\tvar}$, $\ext{c}$, $\ext{\im}$, and $\ext{\tm}$ be defined as in Theorem \ref{thm:reduction}, and let $\ext{\psi}$ be a Kantorovich potential between $\ext{\im}$ and $\ext{\tm}$ for the cost $\ext{c}$. 
Define 
$\Psi \coloneqq (\ext{\psi}(\tvar_i))_{i=1}^{n+1}$ and $\mathbf{\tvar} = (\tvar_i)_{i=1}^{n+1}$,
with the convention that $\tvar_{n+1} = \ext{\tvar}$. For $i\in \{1,\ldots,n+1\}$, define
\begin{equation}\label{eqn:Laguerre}
L_i(\Psi,\mathbf{\tvar})\coloneqq \left\{\svar\in \base\times [0,P]\, \bigg\vert \, \ext{c}(\svar,\tvar_i) - \Psi_i \leq \ext{c}(\svar,\tvar_j) - \Psi_j \quad \forall \, j\in \{1,\ldots,n+1\}\right\}.
\end{equation}
Then
\[
\mathrm{graph}(\bm) \subseteq \partial L_{n+1}(\Psi,\mathbf{\tvar}),
\]
and $T:\X_p\to\Y$ defined by
\begin{equation}\label{eqn:OT_map}
T \coloneqq \sum_{i=1}^n \tvar_i \chfun_{L_i(\Psi,\mathbf{y})}
\end{equation}
is the optimal transport map from $\im_\bm$ to $\tm$ for the cost $c$.
\end{cor}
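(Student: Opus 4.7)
The plan is to specialise Corollary \ref{cor:reduction_map} to the semi-discrete setting and identify the resulting transport map via Laguerre cells. By Corollary \ref{cor:reduction_map}, there is a unique optimal transport map $\ext{T}$ from $\ext{\im}$ to $\ext{\tm}$ for the cost $\ext{c}$, and $T = \ext{T}\vert_{\X_{\bm}}$ is the unique optimal transport map from $\im_\bm$ to $\tm$ for the cost $c$. Since $\ext{\tm}$ is finitely supported at the points $\tvar_1,\ldots,\tvar_{n+1}$ (with the convention $\tvar_{n+1}=\ext{\tvar}$), I would invoke classical semi-discrete optimal transport theory, e.g.~\cite[Proposition 37]{merigot2021optimal}, which gives
\[
\ext{T}(\svar) = \sum_{i=1}^{n+1}\tvar_i\chfun_{L_i(\Psi,\mathbf{\tvar})}(\svar)\quad\text{for }\leb{d}\text{-a.e.\ }\svar\in\base\times[0,P],
\]
with the Laguerre cells $\{L_i(\Psi,\mathbf{\tvar})\}_{i=1}^{n+1}$ partitioning $\base\times[0,P]$ up to $\leb{d}$-negligible pairwise intersections.

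The central step is the inclusion $\mathrm{graph}(\bm)\subseteq\partial L_{n+1}(\Psi,\mathbf{\tvar})$. By \eqref{eqn:psi_relation}, $\Psi_{n+1}=\ext{\psi}(\ext{\tvar})=-\cst_\psi$, so for any $\svar\in\X$, $\ext{c}(\svar,\ext{\tvar})-\Psi_{n+1}=\cst_\psi$. For $\shz\in\base$ with $\bm(\shz)>0$, the optimality condition \eqref{eqn:opt_cond_p_lem} together with the fact that the $c$-transform is attained (since $\Y$ is now finite) yields an index $i^\star\in\{1,\ldots,n\}$ with
\[
c((\shz,\bm(\shz)),\tvar_{i^\star})-\Psi_{i^\star}=\cst_\psi=\ext{c}((\shz,\bm(\shz)),\ext{\tvar})-\Psi_{n+1},
\]
both sides coinciding with $\min_{j}\{\ext{c}((\shz,\bm(\shz)),\tvar_j)-\Psi_j\}$. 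Hence $(\shz,\bm(\shz))\in L_{i^\star}\cap L_{n+1}\subseteq \partial L_{n+1}$. For $\shz$ with $\bm(\shz)=0$, the inequality case of \eqref{eqn:opt_cond_p_lem} gives $(\shz,0)\in L_{n+1}$, and since $(\shz,0)$ lies on the boundary of the domain $\base\times[0,P]$ in $\R^d$, it belongs to $\partial L_{n+1}$ as well.

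Finally, because $T=\ext{T}\vert_{\X_\bm}$ pushes $\im_\bm$ onto $\tm$ and $\tm$ carries no mass at $\ext{\tvar}$, the set $\X_\bm\cap L_{n+1}(\Psi,\mathbf{\tvar})$ must be $\leb{d}$-negligible; equivalently, the graph of $\bm$ separates $L_{n+1}$ (the high-$\svt$ region) from the cells $L_1,\dots,L_n$ (the low-$\svt$ region), so $\X_\bm$ is covered, up to a null set, by $\bigcup_{i=1}^{n}L_i$. Combining this with the formula for $\ext{T}$ restricts it to $T=\sum_{i=1}^n\tvar_i\chfun_{L_i(\Psi,\mathbf{\tvar})}$ on $\X_\bm$. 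I expect the main obstacle to be the boundary inclusion at zeros of $\bm$, where one needs to reconcile the definition of $\partial L_{n+1}$ with the domain boundary $\base\times\{0\}$ and verify that the max-formula \eqref{eqn:bm} for $\bm_\psi$ matches the geometric partition induced by the Laguerre cells; this essentially reduces to a careful book-keeping of which index achieves the minimum in the definition of $\ext{\psi}^{\ext{c}}$ at each height.
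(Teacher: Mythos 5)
Your proposal is correct and takes essentially the same route as the paper, which obtains this corollary directly from Corollary \ref{cor:reduction_map} together with the semi-discrete theory of \cite[Proposition 37]{merigot2021optimal}, with the graph inclusion coming from the optimality conditions of Theorems \ref{thm:subdual} and \ref{thm:reduction} exactly as you use them. The only step worth spelling out is your bald claim $L_{i^\star}\cap L_{n+1}\subseteq \partial L_{n+1}$: at a point $(\shz,\bm(\shz))$ with $\bm(\shz)>0$ this follows because, by strict monotonicity of $c((\shz,\cdot),\tvar_{i^\star})$ (Assumption \ref{ass:cost}.\ref{ass:cost_inc}), every point $(\shz,\svt)$ with $\svt<\bm(\shz)$ satisfies $c((\shz,\svt),\tvar_{i^\star})-\Psi_{i^\star}<\cst_\psi=\ext{c}((\shz,\svt),\ext{\tvar})-\Psi_{n+1}$ and hence lies outside $L_{n+1}$, so $(\shz,\bm(\shz))$ is a boundary point.
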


\section{Cost function}\label{sect:cost_functions}

We now show that the cost function \eqref{eqn:bgs_cost} that defines the energy of an MLM state satisfies Assumptions \ref{ass:cost} and \ref{ass:ext_twist}.

\begin{prop}\label{prop:bgs_cost}
Let $\eps_0,\, \eps_1\in (0,1/2)$, define $\X=[\eps_0,1-\eps_1]\times [0,+\infty)$, and let $\Y\subset (0,+\infty)^2$ be compact and non-empty. Consider the cost function $c:\X\times \Y\to [0,+\infty)$ defined by
\begin{equation}\label{eqn:bgs_cost_prop}
c((s,p),(z,\theta))\coloneqq \frac{1}{2}\left(\frac{z}{a\sqrt{1-s^2}} - \Omega a\sqrt{1-s^2}\right)^2
+C_{\mathrm{p}}\theta\left(\frac{p+p_{\mathrm{min}}}{p_r}\right)^\kappa,
\end{equation}
where $\kappa = 2/7$ and $a$, $\Omega$, $C_{\mathrm{p}}$, $p_{\mathrm{min}}$, and $p_r$ are positive constants. 
Assumptions \ref{ass:cost} and \ref{ass:ext_twist} are satisfied by $c$.
\end{prop}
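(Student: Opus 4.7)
The plan is to verify each condition directly from the explicit formula \eqref{eqn:bgs_cost_prop}, exploiting the key structural fact that $c$ splits as a sum of a ``kinetic'' term depending only on $(s,z)$ and a ``thermal'' term depending only on $(p,\theta)$. This separation of variables drives every monotonicity, equicontinuity, and injectivity argument below.

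For Assumption~\ref{ass:cost}, I would first observe that \eqref{eqn:bgs_cost_prop} defines a smooth (indeed real-analytic) function on the open set $(-1,1)\times(-p_{\min},\infty)\times(0,\infty)^2$, which contains $\X\times\Y$; smoothness on this neighborhood yields local Lipschitz continuity, giving part~\ref{ass:cost_cts}. Part~\ref{ass:cost_inc} is immediate because only the thermal term depends on $p$ and is a positive multiple of $(p+p_{\min})^\kappa$ with $\kappa=2/7>0$, hence strictly increasing and unbounded on $[0,\infty)$. For part~\ref{ass:cost_unif_cts_x}, I would compute $\partial_s c$ and observe that it depends only on $(s,z)$; since $s\in\base=[\eps_0,1-\eps_1]$ is bounded away from $\pm 1$ and $z$ lies in the compact projection of $\Y$ onto its first coordinate, $|\partial_s c|$ is uniformly bounded, and the mean value theorem provides a common Lipschitz constant in $s$ for the whole family, giving equicontinuity.

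For Assumption~\ref{ass:ext_twist}, I extend $c$ by the same formula to $\Omega_\X\times\Omega_\Y$, taking for instance $\Omega_\X=(\eps_0/2,1-\eps_1/2)\times(-p_{\min}/2,\infty)$ and $\Omega_\Y$ a suitable open neighborhood of $\Y$ in $(0,\infty)^2$, so that the extension is $C^1$. Direct differentiation yields
\[
\partial_s c \;=\; \frac{s\bigl(z^2 - \Omega^2 a^4 (1-s^2)^2\bigr)}{a^2(1-s^2)^2}, \qquad \partial_p c \;=\; \frac{C_p\,\kappa\,\theta}{p_r^\kappa}\,(p+p_{\min})^{\kappa-1}.
\]
The map $(z,\theta)\mapsto\nabla_\svar c(\svar_0,z,\theta)$ thus has product form: its first coordinate depends only on $z$ and, for $s_0>0$, is a nonzero affine function of $z^2$, hence strictly monotonic in $z$ on $(0,\infty)$; its second coordinate depends only on $\theta$ and is a strictly positive linear function of $\theta$. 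Injectivity of each factor gives the twist condition.

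The most delicate point I expect will be the uniform lower bound $\min_{\tvar\in\Y}\|\nabla_\svar c(\svar_0,\tvar)\|_{\R^d}\geq\ell$ for all $\svar_0\in\Omega_\X$: the obstacle is that $\partial_p c\to 0$ as $p_0\to\infty$ (since $\kappa<1$), while $\partial_s c$ vanishes on the critical locus $\{z=\Omega a^2(1-s^2)\}$, so neither component alone is uniformly bounded below. I would resolve this by a two-regime argument: fix a threshold $\bar p>0$; on the bounded portion $\{p_0\leq\bar p\}\subset\Omega_\X$, the positivity of $\theta$ on $\Y$ combined with compactness gives a uniform positive lower bound on $\partial_p c$; on $\{p_0>\bar p\}$ the bound must come from $|\partial_s c|$, which is uniformly positive on the compact set $\base\times\{z:(z,\theta)\in\Y\text{ for some }\theta\}$ provided this set stays off the critical locus. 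Choosing $\bar p$ so that the two regime-bounds match and taking $\ell$ to be their minimum then yields the required constant.
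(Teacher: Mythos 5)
Your handling of Assumption \ref{ass:cost} and of the injectivity part of Assumption \ref{ass:twist} is essentially the paper's argument: smoothness of the formula on an open neighbourhood of $\X\times\Y$ gives local Lipschitz continuity, the thermal term gives strict monotonicity and unboundedness in $p$, the uniform bound on $\partial_s c$ (using $s\le 1-\eps_1$ and $z\le z_{\max}$ on the compact $\Y$) gives the equicontinuity in part C, and the product structure of $\nabla_{(s,p)}c$ gives the twist. No issues there.

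The gap is in the uniform lower bound of Assumption \ref{ass:ext_twist}, and you have correctly identified the delicate point, but your two-regime fix does not close it. In the regime $p_0>\bar p$ you need $\bigl|\partial_s c\bigr| = \bigl|\tfrac{z^2 s}{a^2(1-s^2)^2}-\Omega^2 a^2 s\bigr|$ to be bounded below uniformly over the admissible $(s,z)$, and this fails exactly when the $z$-projection of $\Y$ meets the band $[\Omega a^2(1-(1-\eps_1)^2),\,\Omega a^2(1-\eps_0^2)]$, i.e.\ when some parcel has $z=\Omega a^2(1-s^2)$ (zero zonal wind by \eqref{eqn:Z}) at an admissible latitude. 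The hypotheses only require $\Y\subset(0,+\infty)^2$ compact, so this case cannot be excluded — it is in fact the physically typical one — and then no choice of $\bar p$ matches the regimes: taking $s_0$ on the critical locus and $p_0\to+\infty$, one has $\min_{\tvar\in\Y}\|\nabla_{(s,p)}c((s_0,p_0),\tvar)\|_{\R^d}\to 0$, since any open $\Omega_\X\supset\X$ is necessarily unbounded in $p$. For what it is worth, the paper's own proof elides this point: it asserts that $\partial_p c\ge \tfrac{\kappa C_{\mathrm{p}}\theta_{\min}}{p_r}\bigl(\tfrac{p_{\mathrm{min}}}{2p_r}\bigr)^{\kappa-1}$ on all of $\Omega_\X$, but since $\kappa-1=-5/7<0$ this quantity is an \emph{upper} bound for $\partial_p c$ when $p>-p_{\mathrm{min}}/2$, and $\partial_p c\to 0$ as $p\to+\infty$; so your concern is genuine rather than an artefact of your approach. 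The bound does hold uniformly on any region bounded in $p$ (your first regime): for $p\le P'$ one has $\partial_p c\ge \tfrac{\kappa C_{\mathrm{p}}\theta_{\min}}{p_r}\bigl(\tfrac{P'+p_{\mathrm{min}}}{p_r}\bigr)^{\kappa-1}>0$, and such a region, e.g.\ $\base\times[0,P]$ with $P$ the uniform bound of Proposition \ref{prop:bm}, is the only place where Assumption \ref{ass:ext_twist} is actually used (Theorem \ref{thm:reduction}, Corollary \ref{cor:reduction_map}). So the honest repair is either to weaken the assumption to a set bounded in $p$, or to add the hypothesis that the $z$-projection of $\Y$ avoids the critical band — not to try to match the two regimes, which cannot be done for arbitrary compact $\Y$.
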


\begin{proof}
First we show that $c$ satisfies Assumption \ref{ass:ext_twist}. Define $\Omega_\X \coloneqq (\eps_0/2,1-\eps_1/2) \times (-\bm_{\min}/2,+\infty)$. Since $\Y\subset (0,+\infty)^2$ is compact, there exists $\theta_{\min}>0$ such that $\Omega_\Y \coloneqq (0,+\infty) \times(\theta_{\min},+\infty)$ contains $\Y$. Extend $c$ to $\Omega_\X\times\Omega_\Y$ by the formula \eqref{eqn:bgs_cost_prop}. Then $c\in \mathcal{C}^1(\Omega_\X\times\Omega_\Y)$ and for $((s,p),(z,\theta)) \in \Omega_\X\times \Omega_\Y$,
\begin{equation}\label{eqn:c_deriv}
\nabla_{(s,p)}c((s,p),(z,\theta)) =
\left(
\begin{array}{c}
\dfrac{z^2s}{a^2\left(1-s^2\right)^2} - \Omega^2 a^2 s\\
\dfrac{\kappa C_{\mathrm{p}} \theta}{p_r}\left(\dfrac{p+p_{\mathrm{min}}}{p_r}\right)^{\kappa-1}
\end{array}
\right).
\end{equation}
Since $s>0$ and $p+p_{\min}>0$, $\nabla_{(s,p)}c((s,p),\cdot)$ is injective on $\Omega_\Y$. It is immediate from \eqref{eqn:c_deriv} that for all $(s,p)\in \Omega_\X$,
\[
\|\nabla_{(s,p)}c((s,p),\cdot)\|_{\cts{\Omega_\Y}}
\geq
\frac{\kappa C_{\mathrm{p}} \theta_{\min}}{p_r}\left(\frac{p_{\min}}{2p_r}\right)^{\kappa-1}\eqqcolon \ell >0,
\]
so Assumption \ref{ass:ext_twist} holds.

Now we show that $c$ satisfies Assumption \ref{ass:cost}. Since $c\in \mathcal{C}^1(\Omega_\X\times\Omega_\Y)$, it is locally Lipschitz on $\X\times \Y$, so Assumption \ref{ass:cost}.\ref{ass:cost_cts} holds. 
Assumption \ref{ass:cost}.\ref{ass:cost_inc} holds because $\kappa$, $C_{\mathrm{p}}$, $p_{\min}$, $p_r>0$, so the map
\[
p \mapsto C_{\mathrm{p}}\theta\left(\frac{p+p_{\min}}{p_r}\right)^{\kappa}
\]
is strictly increasing and unbounded on $[0,+\infty)$ for all $\theta\in (\theta_{\min},+\infty)$.
Since $\Y$ is compact, there exists a positive constant $z_{\max}$ such that $\Y\subset (0,z_{\max})\times (0,+\infty)$. 
Let $((s,p),(z,\theta))\in \X\times\Y$. Since $s\leq 1-\eps_1$ and $\eps_1\in(0,1/2)$,
\[
a^2\left(1-s^2\right)^2 
\geq a^2\left(1-(1-\eps_1)^2\right)^2
= a^2\eps_1^2(2-\eps_1^2)
\geq a^2\eps_1^2.
\]
It follows from \eqref{eqn:c_deriv} that
\[
\left\vert\pdone{}{s}c((s,p),(z,\theta))\right\vert
\leq
\frac{z_{\max}^2}{a^2\eps_1^2} + \Omega^2a^2\eqqcolon L.
\]
The set 
\[
\left\{c((\cdot,p),(z,\theta)):[\eps_0,1-\eps_1]\to [0,+\infty) \; \big\vert \; p\in [0,+\infty),\, (z,\theta)\in \Y\right\}
\]
therefore consists of $L$-Lipschitz functions so is equicontinuous (Assumption \ref{ass:cost}.\ref{ass:cost_unif_cts_x}). 

\end{proof}

\section{Conclusion}

In this work, we have given a rigorous definition of a Modified Lagrangian Mean (MLM) state in a single hemisphere in latitude-pressure coordinates (Definition \ref{def:MLM_state}). Such a state consists of a surface pressure function $\overline{p}$ determining the extent of the source domain, and an admissible transport map $T=(\ZAM,\PT)$ between the uniform density on the source domain and a mass distribution $\tm$ in the space of zonal angular momentum and potential temperature. We have proved that if $\tm$ has compact support in $(0,+\infty)^2$, then there exists a unique energy-minimising MLM state defined on a domain excluding the equator and the pole (Theorem \ref{thm:duality_existence_uniqueness}). Optimality conditions imply that this state is in fact the solution of a single optimal transport problem (Theorem \ref{thm:reduction}). Our results hold not only for the cost function defining the energy of an MLM state \eqref{eqn:bgs_cost} but for a wide class of cost functions satisfying mild assumptions. These include the cost function \eqref{eqn:SG_cost} defining free-surface variants of the semi-geostrophic equations \cite{cheng2016semigeostrophic,cullen2014solutions,cullen2001variational}.

Regarding energy-minimising MLM states specifically, our results come with three main caveats. First, the exclusion of the extreme latitudes in the given hemisphere. The pole is excluded to ensure that the cost function is sufficiently regular (locally Lipschitz; Assumption \ref{ass:cost}.\ref{ass:cost_cts}). The equator is excluded to ensure that the cost function is twisted (Assumption \ref{ass:twist}) and admits a twisted extension (Assumption \ref{ass:ext_twist}). This guarantees the existence of optimal transport maps (rather than merely optimal transport plans), which can be interpreted as energy-minimising rearrangements of zonal angular momentum and potential temperature. Second, and for the same two reasons, an arbitrary minimum pressure is assumed and built into the cost function \eqref{eqn:bgs_cost}. Third, the zonal angular momentum is assumed to be positive and bounded away from zero to ensure that the cost function is twisted.

In addition to the general existence and uniqueness result, we have shown that to compute an energy-minimising MLM state from a discrete mass distribution in zonal angular momentum and potential temperature coordinates, as in the setting of \cite{methven2015slowly}, it is sufficient to solve a semi-discrete optimal transport problem with a twisted cost (Corollary \ref{cor:reduction_semi_discrete}). This reduces to maximising the Kantorovich dual functional, which is a concave, finite dimensional, unconstrainted maximisation problem \cite[Section 4]{merigot2021optimal}, and is, in principle, numerically tractable.

\section*{Acknowledgements}
The authors would like to thank the Isaac Newton Institute for Mathematical Sciences, Cambridge, for support and hospitality during the satellite programme `Geophysical Fluid Dynamics; from Mathematical Theory to Operational Prediction', hosted at the University of Reading, where work on this paper was initiated, and which was supported by the UK Engineering and Physical Sciences Research Council (EPSRC) grant EP/R014604/1. CE gratefully acknowledges the support of the EPSRC via the grant EP/W522570/1, and of the German Research Foundation (DFG) through the CRC 1456 `Mathematics of Experiment', project A03. DPB acknowledges financial support from the EPSRC grant EP/V00204X/1.

\section*{Data Availability}
Data sharing is not applicable to this article as no datasets were generated or analysed.

\bibliographystyle{siam}
\bibliography{bgs_bib.bib}

\begin{thebibliography}{10}

\bibitem{bouchitte1990new}
{\sc G.~Bouchitt{\'e} and G.~Buttazzo}, {\em New lower semicontinuity results
  for nonconvex functionals defined on measures}, Nonlinear Analysis: Theory,
  Methods \& Applications, 15 (1990), pp.~679--692.

\bibitem{bowman:shepherd:95}
{\sc J.~Bowman and T.~Shepherd}, {\em Nonlinear symmetric stability of
  planetary atmospheres}, J. Fluid Mech., 296 (1995), pp.~391--407.

\bibitem{brenier1991polar}
{\sc Y.~Brenier}, {\em Polar factorization and monotone rearrangement of
  vector-valued functions}, Communications on Pure and Applied Mathematics, 44
  (1991), pp.~375--417.

\bibitem{cheng2016semigeostrophic}
{\sc J.~Cheng}, {\em Semigeostrophic equations in physical space with free
  upper boundary}, Calculus of Variations and Partial Differential Equations,
  56 (2016), pp.~55--149.

\bibitem{cullen2021mathematics}
{\sc M.~J.~P. Cullen}, {\em The Mathematics of Large-Scale Atmosphere and
  Ocean}, World Scientific, 2021.

\bibitem{cullen2001variational}
{\sc M.~J.~P. Cullen and W.~Gangbo}, {\em A variational approach for the
  2-dimensional semi-geostrophic shallow water equations}, Archive for Rational
  Mechanics and Analysis, 156 (2001), pp.~241--273.

\bibitem{cullen2014solutions}
{\sc M.~J.~P. Cullen, D.~Gilbert, and B.~Pelloni}, {\em Solutions of the fully
  compressible semi-geostrophic system}, Communications in Partial Differential
  Equations, 39 (2014), pp.~591--625.

\bibitem{cullen2019stability}
{\sc M.~J.~P. Cullen, T.~Kuna, B.~Pelloni, and M.~Wilkinson}, {\em The
  stability principle and global weak solutions of the free surface
  semi-geostrophic equations in geostrophic coordinates}, Proceedings of the
  Royal Society A, 475 (2019), p.~20180787.

\bibitem{cullen2003fully}
{\sc M.~J.~P. Cullen and H.~Maroofi}, {\em The fully compressible
  semi-geostrophic system from meteorology}, Arch. Rational Mech. Anal., 167
  (2003), p.~309–336.

\bibitem{cullen1984extended}
{\sc M.~J.~P. Cullen and R.~J. Purser}, {\em An extended {L}agrangian theory of
  semi-geostrophic frontogenesis}, Journal of the {A}tmospheric {S}ciences, 41
  (1984), pp.~1477--1497.

\bibitem{cullen2014model}
{\sc M.~J.~P. Cullen and M.~Sedjro}, {\em On a model of forced axisymmetric
  flows}, SIAM Journal on Mathematical Analysis, 46 (2014), pp.~3983--4013.

\bibitem{faria2013existence}
{\sc J.~Faria}, {\em On the existence and weak stability of solutions to the
  compressible semigeostrophic equations}, Journal of Mathematical Analysis and
  Applications, 406 (2013), pp.~447--463.

\bibitem{gangbo1996geometry}
{\sc W.~Gangbo and R.~J. McCann}, {\em The geometry of optimal transportation},
  Acta Mathematica, 177 (1996), pp.~113--161.

\bibitem{haynes:88}
{\sc P.~Haynes}, {\em Forced, dissipative generalisations of finite-amplitude
  wave-activity conservation relations for zonal and non-zonal basic flows}, J.
  Atmos. Sci., 45 (1988), pp.~2352--2362.

\bibitem{hoskins2014fluid}
{\sc B.~J. Hoskins and I.~N. James}, {\em Fluid dynamics of the mid-latitude
  atmosphere}, John Wiley \& Sons, 2014.

\bibitem{levy2022partial}
{\sc B.~L{\'e}vy}, {\em Partial optimal transport for a constant-volume
  lagrangian mesh with free boundaries}, Journal of Computational Physics, 451
  (2022), p.~110838.

\bibitem{mcintyre:80a}
{\sc M.~McIntyre}, {\em Towards a {L}agrangian-mean description of
  stratospheric circulations and chemical transports}, Phil. Trans. R. Soc.
  Lond., A296 (1980), pp.~129--148.

\bibitem{mcintyre:shepherd}
{\sc M.~McIntyre and T.~Shepherd}, {\em An exact local conservation theorem for
  finite-amplitude disturbances to non-parallel shear flows, with remarks on
  {H}amiltonian structure and on {A}rnold's stability theorems}, J. Fluid
  Mech., 181 (1987), pp.~527--565.

\bibitem{merigot2021optimal}
{\sc Q.~M\'{e}rigot and B.~Thibert}, {\em Optimal transport: discretization and
  algorithms}, in Handbook of numerical analysis, vol.~22, Elsevier, 2021,
  pp.~133--212.

\bibitem{methven:13}
{\sc J.~Methven}, {\em Wave activity for large amplitude disturbances described
  by the primitive equations on the sphere}, J. Atmos. Sci., 70 (2013),
  pp.~1616--1630.

\bibitem{methven2015slowly}
{\sc J.~Methven and P.~Berrisford}, {\em The slowly evolving background state
  of the atmosphere}, Quarterly Journal of the Royal Meteorological Society,
  141 (2015), pp.~2237--2258.

\bibitem{santambrogio2015optimal}
{\sc F.~Santambrogio}, {\em Optimal {T}ransport for {A}pplied
  {M}athematicians}, Springer, 2015.

\bibitem{sarrazin2022lagrangian}
{\sc C.~Sarrazin}, {\em Lagrangian discretization of variational mean field
  games}, SIAM Journal on Control and Optimization, 60 (2022), pp.~1365--1392.

\bibitem{shepherd:90}
{\sc T.~Shepherd}, {\em Symmetries, conservation laws, and {H}amiltonian
  structure in geophysical fluid dynamics}, Adv. Geophys., 32 (1990),
  pp.~287--338.

\bibitem{villani2008optimal}
{\sc C.~Villani}, {\em Optimal transport: old and new}, Springer, 2008.

\bibitem{zalinescu2002convex}
{\sc C.~Zalinescu}, {\em Convex analysis in general vector spaces}, World
  Scientific Publishing Co., Inc, 2002.

\end{thebibliography}

\end{document}